\theoremstyle{plain}
\newtheorem{theorem}{Theorem}[section]
\newtheorem{prop}[theorem]{Proposition}
\newtheorem{lemma}[theorem]{Lemma}
\theoremstyle{definition}
\newtheorem{definition}[theorem]{Definition}
\newcommand{\Deltaop}{{\bf \Delta}^{op}}
\newcommand{\Thetanop}{\Theta_n^{op}}
\newcommand{\SSets}{\mathcal{SS}ets}
\newcommand{\Map}{\text{Map}}
\newcommand{\map}{\text{map}}
\newcommand{\Hom}{\text{Hom}}
\newcommand{\colim}{\text{colim}}
\newcommand{\Algt}{\mathcal Alg^\mathcal T}
\newcommand{\SSetst}{\mathcal{SS}ets^\mathcal T}
\newcommand{\SSetsd}{\mathcal{SS}ets^\mathcal D}
\newcommand{\alphau}{{\underline \alpha}}
\newcommand{\Sets}{\mathcal{S}ets}
\newcommand{\Tocat}{\mathcal T_{\mathcal {OC}at}}
\newcommand{\Algtocat}{\mathcal Alg^{\Tocat}}
\newcommand{\Thetaspdelta}{(\Theta_nSp)^{\Deltaop}}
\newcommand{\xu}{{\underline x}}
\newcommand{\Secat}{\mathcal Se \mathcal Cat}
\newcommand{\sesp}{\mathcal Se \mathcal Sp}
\newcommand{\sk}{\text{sk}}
\newcommand{\cosk}{\text{cosk}}
\newcommand{\css}{\mathcal{CSS}}
\newcommand{\hoequiv}{\text{hoequiv}}
\newcommand{\Ho}{\text{Ho}}
\newcommand{\ob}{\text{ob}}
\newcommand{\cof}{\text{cof}}
\newcommand{\Thetansp}{\Theta_nSp}
\begin{document}

\title[$(\infty,n)$-categories]{Comparison of models for $(\infty, n)$-categories, I}

\author[J.E. Bergner]{Julia E. Bergner}

\address{Department of Mathematics, University of California, Riverside, CA 92521}

\email{bergnerj@member.ams.org}

\author[C. Rezk]{Charles Rezk}

\address{Department of Mathematics, University of Illinois at Urbana-Champaign, Urbana, IL}

\email{rezk@math.uiuc.edu}

\date{\today}

\subjclass[2010]{55U35, 55U40, 18D05, 18D15, 18D20, 18G30, 18G55, 18C10}

\keywords{$(\infty, n)$-categories, model categories, $\Theta_n$-spaces, enriched categories}

\thanks{The first-named author was partially supported by NSF grants DMS-0805951 and DMS-1105766, and by a UCR Regents Fellowship.  The second-named author was partially supported by NSF grant DMS-1006054.}

\begin{abstract}
While many different models for $(\infty,1)$-categories are currently being used, it is known that they are Quillen equivalent to one another.  Several higher-order analogues of them are being developed as models for $(\infty, n)$-categories.  In this paper, we establish model structures for some naturally arising categories of objects which should be thought of as $(\infty,n)$-categories.  Furthermore, we establish Quillen equivalences between them.
\end{abstract}

\maketitle

\section{Introduction}

There has been much recent interest in homotopical notions of higher categories.  Given a positive integer $n$, an $n$-category has a notion of $i$-morphisms for all $1 \leq i \leq n$, and one can consider $\infty$-categories, in which there are $i$-morphisms for arbitrarily large $i$.  When such higher categories are considered as having strict associativity and unit laws on compositions at all levels, then their definitions are straightforward.  However, most examples of interest are better expressed as weak $n$-categories, where these laws are only required to hold up to isomorphism, and one needs to impose various coherence laws.  While there have been many proposed models for weak $n$-categories (often extending to models for weak $\infty$-categories), the problem of comparing these models has thus far been intractable.

However, in the world of homotopy theory, models for so-called $(\infty,1)$-categories, or $\infty$-categories with all $i$-morphisms invertible for $i>1$, have been far more manageable.  Several different approaches were taken, some originating from the idea of modeling homotopy theories, others with the intent of developing this kind of special case for higher category theory.  While these are by no means the only ones, four models for $(\infty,1)$-categories have been equipped with appropriate model structures: simplicial categories \cite{simpcat}, Segal categories \cite{hs}, \cite{pell}, quasi-categories \cite{joyal}, \cite{lurie}, and complete Segal spaces \cite{rezk}, and they have all been shown to be Quillen equivalent to one another \cite{survey}, \cite{thesis}, \cite{dugspiv}, \cite{hs}, \cite{joyal1}, \cite{jt}.

Simplicial categories, or categories enriched over simplicial sets, are probably the easiest to understand as $(\infty,1)$-categories, especially if we apply geometric realization and consider topological categories, or categories enriched over topological spaces.  Given any objects $x$ and $y$ in a topological category $\mathcal C$, the points of the mapping space $\Map_\mathcal C(x,y)$ can be regarded as 1-morphisms.  Paths between these points are 2-morphisms, but since paths can be reversed, these 2-morphisms are invertible up to homotopy.  Homotopies between these paths are 3-morphisms, and we can continue to take homotopies between homotopies to see that we have $n$-morphisms for arbitrarily large $n$, all of which are invertible up to homotopy.

Segal categories and quasi-categories are two different ways of thinking of weakened versions of simplicial categories, in which composition of mapping spaces is only defined up to homotopy.  Segal categories are bisimplicial sets with discrete space at level zero which satisfy a Segal condition, guaranteeing an up-to-homotopy composition.  Quasi-categories, on the other hand, are just simplicial sets, generally described in terms of a horn-filling condition which essentially gives the same kind of composition up to homotopy.

Like Segal categories, complete Segal spaces are bisimplicial sets satisfying the Segal condition, but instead of being discrete at level zero, they satisfy a ``completeness" condition that makes up for it: essentially, the spaces at level zero are weakly equivalent to the subspace of ``homotopy equivalences" sitting inside the space of morphisms.  The Quillen equivalence between the model structure for Segal categories and the model structure for complete Segal spaces tells us that this completeness condition exactly compensates for the discreteness of the level zero space in a Segal category.

While $(\infty,1)$-categories have been enormously useful in many ways, Lurie's recent proof of the cobordism hypothesis \cite{luriecob} has brought attention to the fact that they are not always good enough: for some purposes we need higher versions as well.  Thus, we can consider more general $(\infty,n)$-categories, or $\infty$-categories with $i$-morphisms invertible for $i>n$.  A few models for such objects have been proposed, namely the Segal $n$-categories of Hirschowitz-Simpson and Pelissier \cite{hs}, \cite{pell}, the $n$-fold complete Segal spaces of Barwick \cite{luriecob}, and the $\Theta_n$-spaces of the second-named author \cite{rezktheta}.  The latter model has the advantage that its model structure is cartesian closed.

In this paper, we seek to use the $\Theta_n$-space model to develop an $(\infty,n+1)$-analogue of simplicial categories.  Furthermore, we define a weakened version of it, which can be regarded as an $(\infty,n+1)$-version of Segal categories, but different from the Hirschowitz-Simpson model, and prove that the two are Quillen equivalent.  In fact, we have two different model structures for these higher Segal categories,

The model we propose for a higher-dimensional analogue of Segal categories is described in terms of functors $\Deltaop \rightarrow \Theta_nSp$, where $\Theta_nSp$ denotes the model category for $\Theta_n$-spaces, satisfying the Segal condition and a discreteness condition with respect to their being $\Deltaop$-diagrams.  We show that there exist two model structures, just as we have for ordinary Segal categories, which are Quillen equivalent to one another, and that they are in turn Quillen equivalent to the model category of categories enriched over $\Theta_nSp$.  This result generalizes the one establishing the Quillen equivalence between simplicial categories and Segal categories, i.e., the case where $n=1$ \cite{thesis}.  While only one of these model structures is necessary for this Quillen equivalence, the other one is the easier one to describe.  Furthermore, we anticipate, as in the $(\infty,1)$-case, that we will need the second one as we eventually seek to continue the zig-zag to establish the equivalence with $\Theta_{n+1}$-spaces.  These Quillen equivalences will be the subject of another paper.

Just as in the $(\infty, 1)$-category case, there are a number of preliminary results that need to be established.  We first show that we have appropriate model categories and Quillen equivalences when we restrict to Segal objects and the corresponding enriched categories which have a fixed set of objects.  To do so, we need to show that rigidification results of Badzioch on algebras over algebraic theories \cite{bad} continue to hold when we take these algebras in categories other than that of simplicial sets.

We also make use of our understanding of sets of generating cofibrations in a Reedy category, as well as the fact, established in a separate manuscript \cite{elegant}, that in this case the Reedy and injective model structures coincide.  By modifying these generating cofibrations appropriately, we are able to find a set of generating cofibrations for our more restrictive situation where the objects at level zero are discrete.  From there, we can find the more general model structures and prove the Quillen equivalence with the enriched categories much as we proved it in the earlier case.

\subsection{Work still to be done}
So far we have not extended the chain of Quillen equivalences to $\Theta_{n+1}Sp$, which would be the end goal, but there are a couple of possible approaches to doing so.  We expect to show that our model structure for Segal category objects is Quillen equivalent to the model category of complete Segal objects in $\Theta_nSp$, which is in turn Quillen equivalent to $\Theta_{n+1}Sp$.  This last step should use an inductive argument using the characterization of $\Theta_n$ as a wreath product of $n$ copies of ${\bf \Delta}$ \cite{berger} and be the first in a chain of Quillen equivalences between $\Theta_nSp$ and the model structure for Barwick's $n$-fold complete Segal spaces.  These results will be the subject of a future paper.

The results of this paper hold for more general cartesian presheaf categories other than $\Theta_nSp$.  However, the proofs require a good deal more subtlety, so these results will be given in a separate paper \cite{enrich}.  This problem has also been addressed by Simpson \cite{simpson}.

\subsection{Related work}
There are other models for $(\infty, n)$-categories as well as comparisons being established.  For example, Barwick has defined quasi-$n$-categories and compared them with $\Theta_n$-spaces; this model is also cartesian closed and therefore lends itself to defining a model via enrichment over it \cite{bar}.  In the case where $n=2$, Lurie has a model using Verity's complicial sets \cite{lurie2}, \cite{verity}.  Generalizing a result of To\"en \cite{toen}, Barwick and Schommer-Pries have developed a set of axioms which any model for $(\infty, n)$-categories must satisfy \cite{bsp}.  Ayala and Rozenblyum have also given a more geometric model for $(\infty,n)$-categories and have shown that it is Quillen equivalent to $\Theta_nSp$ \cite{ar}.

\subsection{Outline of the paper}
In Section 2 we review some basic material on model categories and simplicial objects, and in Section 3 we establish a model structure for categories enriched in $\Theta_nSp$.  In Sections 4 and 5, we generalize comparisons between Segal categories and simplicial categories in the fixed object set case to more general Segal category objects and enriched categories in $\Theta_nSp$.  Section 6 is devoted to establishing model structures for Segal category objects and in Section 7 we prove that they are Quillen equivalent to the model category of enriched categories.  In Section 8 we establish a technical result about fibrations in $\Thetansp$.

\section{Background}

Let ${\bf \Delta}$ denote the simplicial indexing category whose objects are the finite ordered sets $[n]=\{0<1< \cdots <n\}$ for $n \geq 0$.  Recall that a \emph{simplicial set} is a functor $\Deltaop \rightarrow \Sets$, where $\Sets$ denotes the category of sets.  Denote by $\SSets$ the category of simplicial sets.

A \emph{simplicial space} is a functor $\Deltaop \rightarrow \SSets$.  A simplicial set $X$ can be regarded as a simplicial space in two ways.  It can be considered a constant simplicial space with the simplicial set $X$ at each level, and in this case we will also denote
the constant simplicial set by $X$.  Alternatively, we can take the simplicial space,
which we denote $X^t$, for which $(X^t)_n$ is the discrete
simplicial set $X_n$.  The superscript $t$ is meant to suggest
that this simplicial space is the ``transpose" of the constant
simplicial space.

We recall some basics on model categories.  A \emph{model category} $\mathcal M$ is a category with three distinguished classes of morphisms: weak equivalences, fibrations, and cofibrations, satisfying five axioms \cite[3.3]{ds}.  Given a model category structure, one can define the \emph{homotopy category} $\Ho(\mathcal M)$, which is a localization of $\mathcal M$ with respect to the class of weak equivalences \cite[1.2.1]{hovey}.  An object $x$ in a model category $\mathcal M$ is \emph{fibrant} if the unique map $x \rightarrow \ast$ to the terminal object is a fibration.  Dually, an object $x$ in $\mathcal M$ is \emph{cofibrant} if the unique map $\varnothing \rightarrow x$ from the initial object is a cofibration.

Recall that an \emph{adjoint pair} of functors $F \colon \mathcal C \leftrightarrows \mathcal D
\colon G$ satisfies the property that, for any objects $X$ of
$\mathcal C$ and $Y$ of $\mathcal D$, there is a natural
isomorphism
\[ \varphi: \Hom_\mathcal D(FX, Y) \rightarrow \Hom_\mathcal C(X,
GY). \]  The functor $F$ is called the \emph{left adjoint} and $G$
the \emph{right adjoint} \cite[IV.1]{macl}.

\begin{definition} \cite[1.3.1]{hovey}
An adjoint pair of functors $F \colon \mathcal M \leftrightarrows
\mathcal N \colon G$ between model categories is a \emph{Quillen
pair} if $F$ preserves cofibrations and $G$ preserves fibrations.  The left adjoint $F$ is called a \emph{left Quillen functor}, and the right adjoint $G$ is called the \emph{right Quillen functor}.
\end{definition}

\begin{definition} \cite[1.3.12]{hovey}
A Quillen pair of model categories is a \emph{Quillen equivalence}
if for all cofibrant $X$ in $\mathcal M$ and fibrant $Y$ in
$\mathcal N$, a map $f \colon FX \rightarrow Y$ is a weak
equivalence in $\mathcal D$ if and only if the map $\varphi f
\colon X \rightarrow GY$ is a weak equivalence in $\mathcal M$.
\end{definition}

We will also need the notion of a simplicial model category
$\mathcal M$. For any objects $X$ and $Y$ in a simplicial category
$\mathcal M$, the \emph{function complex} is the simplicial set
$\Map(X,Y)$.

A \emph{simplicial model category} $\mathcal M$ is a model
category $\mathcal M$ that is also a simplicial category such that two axioms hold \cite[9.1.6]{hirsch}.

\begin{definition} \cite[17.3.1]{hirsch}
A \emph{homotopy function complex} $\Map^h(X,Y)$ in a simplicial
model category $\mathcal M$ is the simplicial set $\Map(\widetilde
X, \widehat Y)$ where $\widetilde X$ is a cofibrant replacement of
$X$ in $\mathcal M$ and $\widehat Y$ is a fibrant replacement for
$Y$.
\end{definition}

Several of the model category structures that we use are obtained
by localizing a given model category structure with respect to a
map or a set of maps.  Suppose that $P = \{f:A \rightarrow B\}$ is
a set of maps with respect to which we would like to localize a
model category $\mathcal M$.

\begin{definition} \label{local}
A $P$-\emph{local} object $W$ is a fibrant object of $\mathcal M$
such that for any $f:A \rightarrow B$ in $P$, the induced map on
homotopy function complexes
\[ f^*:\Map^h(B,W) \rightarrow \Map^h(A,W) \]
is a weak equivalence of simplicial sets.  A map $g:X \rightarrow
Y$ in $\mathcal M$ is a $P$-\emph{local equivalence} if for
every $P$-local object $W$, the induced map on homotopy function
complexes
\[ g^*: \Map^h(Y,W) \rightarrow \Map^h(X,W) \]
is a weak equivalence of simplicial sets.
\end{definition}

If $\mathcal M$ is a sufficiently nice model category, then one can obtain a new model structure with the same underlying category as $\mathcal M$ but with weak equivalences the $P$-local equivalences and fibrant objects the $P$-local objects \cite[4.1.1]{hirsch}.

Suppose that $\mathcal D$ is a small category and consider the category of functors $\mathcal D \rightarrow \SSets$, or $\mathcal D$-diagrams of spaces.  We would like to consider model category
structures on the category $\SSetsd$ of such diagrams. A natural choice for the weak equivalences in $\SSets^\mathcal D$
is the class of levelwise weak equivalences of simplicial sets.
Namely, given two $\mathcal D$-diagrams $X$ and $Y$, we define a
map $f:X \rightarrow Y$ to be a weak equivalence if and only if
for each object $d$ of $\mathcal D$, the map $X(d) \rightarrow
Y(d)$ is a weak equivalence of simplicial sets.

There is a model category structure $\SSetsd_f$ on the category of
$\mathcal D$-diagrams with these weak equivalences and in which
the fibrations are given by levelwise fibrations of simplicial
sets.  The cofibrations in $\SSetsd_f$ are then the maps of
simplicial spaces which have the left lifting property with
respect to the maps which are levelwise acyclic fibrations. This
model structure is often called the \emph{projective} model
category structure on $\mathcal D$-diagrams of spaces \cite[IX,
1.4]{gj}. Dually, there is a model category structure $\SSetsd_c$
in which the cofibrations are given by levelwise cofibrations of
simplicial sets, and this model structure is often called the
\emph{injective} model category structure \cite[VIII, 2.4]{gj}. In particular, we obtain these model structures for $\mathcal D=\Deltaop$, so that the category $\SSets^{\Deltaop}$ is just the category of simplicial spaces.

However, $\Deltaop$ is a Reedy category \cite[15.1.2]{hirsch}, and therefore we also have the Reedy model category structure on simplicial spaces
\cite{reedy}. In this structure, the weak equivalences are again
the levelwise weak equivalences of simplicial sets.  This model structure is cofibrantly generated, where the
generating cofibrations are the maps
\[ \partial \Delta[m] \times \Delta [n]^t \cup \Delta [m] \times
\partial \Delta [n]^t \rightarrow \Delta [m] \times \Delta [n]^t \]
for all $n,m \geq 0$, an the generating acyclic cofibrations are the maps
\[ V[m,k] \times \Delta [n]^t \cup \Delta [m] \times \partial \Delta
[n]^t \rightarrow \Delta [m] \times \Delta [n]^t \] for all $n
\geq 0$, $m \geq 1$, and $0 \leq k \leq m$ \cite[2.4]{rezk}.

However, for simplicial spaces, the Reedy model structure coincides with the injective model structure, as follows.

\begin{prop} \cite[15.8.7, 15.8.8]{hirsch} \label{inj}
A map $f:X \rightarrow Y$ of simplicial spaces is a cofibration in
the Reedy model category structure if and only if it is a
monomorphism. In particular, every simplicial space is Reedy
cofibrant.
\end{prop}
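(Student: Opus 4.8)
The plan is to reduce the statement to the standard latching-object description of Reedy cofibrations and then to feed in the Eilenberg--Zilber decomposition of simplicial objects. Recall first that, for the Reedy category $\Deltaop$, a map $f\colon X\to Y$ of simplicial spaces is a Reedy cofibration exactly when for every $n\ge 0$ the relative latching map
\[ X_n\cup_{L_nX}L_nY\longrightarrow Y_n \]
is a cofibration of simplicial sets, i.e.\ a monomorphism; here $L_nX$ is the latching object at $[n]$, which for a simplicial object is the sub-simplicial set of $X_n$ spanned by the degenerate elements, that is, the union of the images of the degeneracies $s_i\colon X_{n-1}\to X_n$. I would isolate three elementary facts, each of which follows, after evaluating in a fixed internal simplicial degree, from the classical Eilenberg--Zilber lemma: (a) the latching map $L_nX\to X_n$ is always a monomorphism; (b) a monomorphism in the functor category $\SSets^{\Deltaop}$ is the same as a levelwise monomorphism, so that $f$ is a monomorphism precisely when each $f_n\colon X_n\to Y_n$ is injective; and (c) an injective simplicial map sends nondegenerate elements to nondegenerate elements, and consequently, if $f_m$ is injective for all $m<n$, then the induced map $L_nf\colon L_nX\to L_nY$ on latching objects is a monomorphism.

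For the implication that every monomorphism is a Reedy cofibration, I would fix $n$ and analyze the pushout square defining $X_n\cup_{L_nX}L_nY$. Since $\SSets$ is a topos and both maps out of $L_nX$ in this square are monomorphisms by (a) and (c), the square is also a pullback and the two legs into the pushout are monomorphisms meeting in $L_nX$, so the pushout is the union of $X_n$ and $L_nY$ along $L_nX$. It remains to check that the canonical comparison map to $Y_n$ is injective, and the only nontrivial point---the main obstacle---is to rule out an identification between an element $x\in X_n$ with $x\notin L_nX$ and an element of $L_nY$. This is exactly where (c) is decisive: the image $f_n(x)$ is nondegenerate in $Y_n$, whereas $L_nY$ maps into the degenerate part of $Y_n$, so by the uniqueness clause of Eilenberg--Zilber no such identification is possible. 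Combined with the injectivity of $f_n$ and of the latching map $L_nY\to Y_n$, this shows the relative latching map is a monomorphism, hence a cofibration.

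Conversely, to see that a Reedy cofibration $f$ is a monomorphism I would induct on $n$, showing each $f_n$ injective. For $n=0$ the latching object is empty, so $f_0$ coincides with the relative latching map and is therefore a monomorphism. For the inductive step, the inductive hypothesis gives that $f_m$ is injective for all $m<n$; since $L_nX$ and $L_nf$ depend only on these lower levels, fact (c) shows $L_nf\colon L_nX\to L_nY$ is a monomorphism, whence $X_n\to X_n\cup_{L_nX}L_nY$, being the cobase change of $L_nf$, is a monomorphism. Composing with the relative latching map, which is a monomorphism by hypothesis, exhibits $f_n$ as a composite of monomorphisms. Finally, the concluding assertion is immediate: the initial object of $\SSets^{\Deltaop}$ is the empty simplicial space, the map $\varnothing\to X$ is always a monomorphism, and hence a Reedy cofibration by what we have shown, so every simplicial space is Reedy cofibrant.
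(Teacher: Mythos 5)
Your proof is correct, and it is essentially the standard argument: the paper does not prove this proposition itself but cites Hirschhorn [15.8.7, 15.8.8], whose proof rests on exactly the facts you isolate (the latching object as the degenerate part, preservation of nondegenerates by injections, and the uniqueness clause of the Eilenberg--Zilber lemma), applied degreewise in the internal simplicial direction. Indeed, when the paper later proves the analogous statement for presheaves on $\Theta_n$, it runs the same latching-object/Eilenberg--Zilber scheme, so your argument matches both the cited source and the paper's own template.
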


In light of this result, we denote the Reedy model structure
on simplicial spaces by $\SSets^{\Deltaop}_c$.  Both
$\SSets^{\Deltaop}_c$ and $\SSets^{\Deltaop}_f$ are simplicial
model categories. In each case, given two simplicial spaces $X$
and $Y$, we can define $\Map(X,Y)$ by
\[ \Map (X,Y)_n = \Hom_{\SSets^{\Deltaop}} (X \times \Delta [n],Y). \]

The projective model structure $\SSets^{\Deltaop}_f$ is also cofibrantly generated, and a set of generating cofibrations consists of the maps
\[ \partial \Delta [m] \times \Delta [n]^t \rightarrow \Delta [m]
\times \Delta [n]^t \] for all $m,n \geq 0$ \cite[IV.3.1]{gj}.

\section{Categories enriched in $\Theta_n$-spaces}

In this section, we begin with a summary of basic definitions and results for $\Theta_n$-spaces; a thorough treatment can be found at \cite{rezk} for $n=1$ and \cite{rezktheta} for the general case. We then establish a model for $(\infty, n+1)$-categories given by categories enriched in $\Theta_n$-spaces.  Since $\Theta_n$-spaces model $(\infty, n)$-categories, the model structure on these enriched categories is thus a higher-order version of the model structure on simplicial categories.

\begin{definition} \cite[4.1]{rezk}
A Reedy fibrant simplicial space $W$ is a \emph{Segal space} if for each $k \geq 2$ the Segal map
\[ \varphi_k: W_k \rightarrow \underbrace{W_1 \times_{W_0} \cdots \times_{W_0} W_1}_k \] is a weak equivalence of simplicial sets.
\end{definition}

\begin{theorem}  \cite[7.1]{rezk}
There is a cartesian closed model structure $\sesp$ on the category of simplicial spaces in which the fibrant objects are precisely the Segal spaces.
\end{theorem}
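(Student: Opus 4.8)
The plan is to realize $\sesp$ as a left Bousfield localization of the Reedy model structure $\SSets^{\Deltaop}_c$ on simplicial spaces, and then to check separately that this localization is compatible with the cartesian product. For each $k \geq 2$ let $G(k) \subset \Delta[k]$ be the \emph{spine}, namely the union of the $k$ edges $\Delta[1] \to \Delta[k]$ spanning consecutive vertices $\{i-1,i\}$, glued along their shared endpoints, so that $G(k)$ is a chain of $k$ copies of $\Delta[1]$. Taking transposes gives inclusions $G(k)^t \to \Delta[k]^t$ of simplicial spaces, and I set $P = \{G(k)^t \to \Delta[k]^t : k \geq 2\}$; in fact one expects the single map with $k=2$ to generate the same localization, the higher spine inclusions being built from it by pushouts and composites.

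Since $\SSets^{\Deltaop}_c$ is a left proper, cellular, simplicial model category, the general localization theorem \cite[4.1.1]{hirsch} yields a model structure on the same underlying category, with unchanged cofibrations --- hence the monomorphisms, by Proposition \ref{inj} --- whose weak equivalences are the $P$-local equivalences and whose fibrant objects are the $P$-local objects. The main step is then to identify the $P$-local objects with the Segal spaces. Using the simplicial enrichment one finds, for $W$ Reedy fibrant, that $\Map(\Delta[k]^t, W) \cong W_k$, while, because $\Map(-,W)$ carries the defining pushouts of $G(k)^t$ to pullbacks, $\Map(G(k)^t, W) \cong W_1 \times_{W_0} \cdots \times_{W_0} W_1$. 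Reedy fibrancy makes the map $W_1 \to W_0$ a fibration, so this iterated fibre product computes the homotopy fibre product; hence $W$ is $P$-local exactly when each Segal map $\varphi_k$ is a weak equivalence, that is, exactly when $W$ is a Segal space.

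The step I expect to be the main obstacle is cartesian closedness, which does not follow formally from the localization machinery. The condition to verify is that $- \times X$ preserves $P$-local equivalences; for this it suffices that for every $f \in P$ and every $X$ in a set of generators the map $f \times \mathrm{id}_X$ is a $P$-local equivalence. Concretely I would let $X$ range over the domains and codomains of the Reedy generating cofibrations $\partial\Delta[m] \times \Delta[\ell]^t \to \Delta[m] \times \Delta[\ell]^t$ and verify that their pushout-products with the spine inclusions $G(k)^t \to \Delta[k]^t$ are $P$-local equivalences. This is a combinatorial computation in $\SSets^{\Deltaop}$ that uses that the underlying Reedy structure is already cartesian closed, together with the two-out-of-three property and the closure of $P$-local equivalences under homotopy pushout. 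Granting this compatibility, the pushout-product axiom descends to the localized cofibrations and acyclic cofibrations, and $\sesp$ is a cartesian closed model category.
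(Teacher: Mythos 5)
Your skeleton---localize the Reedy structure $\SSets^{\Deltaop}_c$ at the spine inclusions and identify the local objects with the Segal spaces---is exactly Rezk's construction, which is what this paper cites, and that part of your argument is fine. But the theorem's distinguishing claim is that the resulting model structure is \emph{cartesian closed}, and that is precisely the step you defer: you reduce it to showing that pushout-products of the spine inclusions with the Reedy generating cofibrations are $P$-local equivalences, and then write ``granting this compatibility.'' That verification is not a routine combinatorial consequence of the Reedy structure being cartesian or of the localization machinery; it is the mathematical heart of the theorem, occupying \cite[\S 10]{rezk}. Rezk proves the adjoint form of your condition: for every Segal space $W$ and every simplicial space $X$, the function object $W^X$ is again a Segal space. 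He does this by first treating $X=\Delta[1]$ by hand using the notion of a cover, then exhibiting $W^{\Delta[k]}$ as a retract of $W^{(\Delta[1])^k}$, then handling $W^{\Delta[k]\times Y}$ for $Y$ constant, and finally writing an arbitrary $X$ as a homotopy colimit of objects $\Delta[k]\times Y$, so that $W^X$ is a homotopy limit of local objects. (This paper reproduces the same outline in its proof of the analogous cartesianness statement for functors $\Deltaop \rightarrow \Thetansp$.) As written, your proposal establishes the localized model structure and the characterization of its fibrant objects, but not cartesian closedness, so the statement remains unproved.

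Separately, your parenthetical claim that the single map $G(2)^t\rightarrow\Delta[2]^t$ generates the same localization is false, so it must not be relied on. Locality with respect to $k=2$ provides a homotopy-coherent binary composition but nothing enforcing associativity. Concretely, let $M$ be a unital magma that is not associative, and let $N$ be the $2$-coskeleton of the $2$-truncated simplicial set with $N_0=\ast$, $N_1=M$, and $N_2=M\times M$, where $d_2$ and $d_0$ are the two projections and $d_1$ is the product. Then $N_3$ is the set of triples $(f,g,h)$ with $(fg)h=f(gh)$. The levelwise discrete simplicial space $N^t$ is Reedy fibrant, and its Segal map $\varphi_2$ is an isomorphism, so $N^t$ is local with respect to your $k=2$ map; but $\varphi_3$ is the non-surjective inclusion $N_3\hookrightarrow M^3$, so $N^t$ is not a Segal space. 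Hence the higher spine inclusions do not lie in the weak saturation of the $k=2$ one. This error is not load-bearing, since the set $P$ you actually localize at contains all $k\geq 2$, but the remark should be deleted.
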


Because Segal spaces satisfy this Segal condition, we can regard them as being weakened versions of simplicial categories and apply appropriate terminology.  The \emph{objects} of a Segal space $W$ are the elements of the set $W_{0,0}$.  The \emph{mapping space} $\map_W(x,y)$ is given by the fiber of the map \[ (d_1, d_0): W_1 \rightarrow W_0 \times W_0 \] over $(x,y)$.  Since $W$ is Reedy fibrant, the fiber is in fact a homotopy fiber and therefore the mapping space is homotopy invariant.  Two maps $f,g \in \map_W(x,y)_0$ are \emph{homotopic} if they lie in the same component of the mapping space $\map_W(x,y)$.  The space of homotopy equivalences $W_{\hoequiv} \subseteq W_1$ is defined to be the union of all the components containing homotopy equivalences.  There is a (non-unique) way to compose mapping spaces, as given explicitly by the second-named author in \cite[\S 4]{rezk}.

The \emph{homotopy category} of $W$, denoted $\Ho(W)$, has objects the elements of the set $W_{0,0}$, and
\[ \Hom_{\Ho(W)}(x,y) = \pi_0 \map_W(x,y). \]  The image of a homotopy equivalence of $W$ in $\Ho(W)$ is an isomorphism.

We can consider maps between Segal spaces that are similar in structure to Dwyer-Kan equivalences of simplicial categories; we even give them the same name.

\begin{definition} \cite{rezk}
A map $f \colon W \rightarrow Z$ of Segal spaces is a \emph{Dwyer-Kan equivalence} if
\begin{enumerate}
\item for any objects $x$ and $y$ of $W$, the induced map $\map_W(x,y) \rightarrow \map_Z(fx,fy)$ is a weak equivalence of simplicial sets, and

\item the induced map $\Ho(W) \rightarrow \Ho(Z)$ is an equivalence of categories.
\end{enumerate}
\end{definition}

For a Segal space $W$, notice that the degeneracy map $s_0 \colon W_0 \rightarrow W_1$ factors through the space of homotopy
equivalences $W_{\hoequiv}$, since the image of $s_0$ consists of ``identity maps."

\begin{definition} \cite[\S 6]{rezk}
A Segal space $W$ is a \emph{complete Segal space} if the map $W_0 \rightarrow W_{\hoequiv}$ given above is a weak equivalence of simplicial sets.
\end{definition}

\begin{theorem} \cite[7.2]{rezk}
There is a cartesian closed model structure $\css$ on the category of simplicial spaces in which the fibrant objects are precisely the complete Segal spaces.
\end{theorem}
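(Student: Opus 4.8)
The plan is to realize $\css$ as a left Bousfield localization of the Segal space model structure $\sesp$ established just above. The Reedy structure $\SSets^{\Deltaop}_c$ of Proposition~\ref{inj} is a cofibrantly generated, left proper, cellular simplicial model category, and since $\sesp$ is itself a localization of it, $\sesp$ inherits these properties and is therefore sufficiently nice to admit a further localization in the sense of \cite[4.1.1]{hirsch}. The only condition we must add beyond the Segal condition is completeness, and the strategy is to force it by localizing with respect to a single map, rather than rebuilding everything from the Reedy structure.

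First I would fix a simplicial space $E$ modeling the ``free-living homotopy equivalence,'' namely (the transpose of) the nerve of the groupoid with two objects and a unique isomorphism between them, together with the canonical collapse map $E \rightarrow \Delta[0]$. Localizing $\sesp$ with respect to the single map $\{E \rightarrow \Delta[0]\}$ and invoking \cite[4.1.1]{hirsch} produces a model structure $\css$ on simplicial spaces whose fibrant objects are exactly the Segal spaces that are moreover local with respect to $E \rightarrow \Delta[0]$.

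Second I would identify these fibrant objects with the complete Segal spaces. By Definition~\ref{local}, a fibrant $W$ is local precisely when the induced map $\Map^h(\Delta[0], W) \rightarrow \Map^h(E, W)$ is a weak equivalence. The key computation is that, for a Segal space $W$, one has $\Map^h(\Delta[0], W) \simeq W_0$ while $\Map^h(E, W)$ is weakly equivalent to the space of homotopy equivalences $W_{\hoequiv} \subseteq W_1$, since a map out of $E$ into a Segal space is exactly the datum of an invertible $1$-morphism; under these identifications the localizing map induces the completeness map $W_0 \rightarrow W_{\hoequiv}$. Thus the local Segal spaces are precisely the complete Segal spaces. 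This translation of the abstract locality condition into the concrete completeness condition, especially the computation of $\Map^h(E, W)$ and the verification that it detects exactly the homotopy equivalences, is the technical heart of the argument and the step I expect to be the main obstacle.

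Finally, for the cartesian closed structure I would verify that $\css$ satisfies the pushout-product axiom with respect to its localized weak equivalences. Since $\SSets^{\Deltaop}_c$ is already cartesian and $\sesp$ inherits this, it suffices to show that the class of completeness-local equivalences is closed under cartesian product with an arbitrary simplicial space; concretely, that $(E \rightarrow \Delta[0]) \times X$ is a local equivalence for every $X$, and dually that the local objects are closed under the internal exponential. Together these give that $\css$ is a cartesian closed simplicial model category whose fibrant objects are the complete Segal spaces, completing the proof.
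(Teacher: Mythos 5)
This statement is quoted from Rezk's paper (cited as \cite[7.2]{rezk}); the present paper gives no proof of its own, and your outline reproduces essentially the argument of the cited source: localize the Reedy (equivalently, the Segal space) structure at the single discrete map $E \rightarrow \Delta[0]$, identify $\Map^h(\Delta[0],W) \simeq W_0$ and $\Map^h(E,W) \simeq W_{\hoequiv}$ for a Segal space $W$ (Rezk's Theorem 6.2, which you correctly flag as the technical heart), and deduce the cartesian property by checking that local objects are closed under internal exponentials. So your proposal is correct in structure and takes the same route as the proof it is being compared against.
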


We now turn to $\Theta_n$-spaces as higher-order complete Segal spaces.  We begin by recalling the definition of the $\Theta$-construction.  Let $\mathcal C$ be a small category, and define $\Theta \mathcal C$ to be the category with objects $[m](c_1, \ldots, c_m)$ where $[m]$ is an object of ${\bf \Delta}$ and each $c_i$ is an object of $\mathcal C$.  A morphism
\[ [m](c_1, \ldots ,c_m) \rightarrow [q](d_1, \ldots, d_q) \] is given by $(\delta, \{f_{ij}\})$ where $\delta \colon [m] \rightarrow [q]$ in ${\bf \Delta}$ and $f_{ij} \colon c_i \rightarrow d_j$ are morphisms in $\mathcal C$ indexed by $1 \leq i \leq m$ and $1 \leq j \leq q$ where $\delta(i-1) < j \leq \delta (i)$ \cite[3.2]{rezktheta}.

Inductively, let $\Theta_0$ be the terminal category with a single object and no non-identity morphisms, and then define $\Theta_n=\Theta \Theta_n$.  Note that $\Theta_1={\bf \Delta}$.  The categories $\Theta_n$ have also been studied by Joyal and by Berger \cite{berger2}, \cite{berger}.

We can consider functors $\Thetanop \rightarrow \Sets$, and the most important example is the following.  For any object $[m](c_1, \ldots, c_m)$, let $\Theta[m](c_1, \ldots, c_m)$ be the analogue of $\Delta[m]$ in $\SSets$, i.e., the representable object for maps into $[m](c_1, \ldots, c_m)$.

Here, we consider functors $\Theta_n^{op} \rightarrow \SSets$.  Notice that any simplicial set can be regarded as a constant functor of this kind, and any functor $\Thetanop \rightarrow \Sets$, in particular the representable one given above, can be regarded as a levelwise discrete functor to $\SSets$.  Since, unlike in the case of simplicial spaces, the indexing diagrams in each direction are different, we can simply use the notation from the original category to denote such an object.  Since $\Thetanop$ is a Reedy category \cite{berger}, we have the Reedy model structure, as well as the projective and injective model structures, on the category $\SSets^{\Thetanop}$.  However, we prove in \cite{elegant} that the injective and Reedy model structures agree here, just as in the case of simplicial spaces.

Given $m \geq 2$ and $c_1, \ldots, c_m$ objects of $\Theta_n$, define the object
\[ G[m](c_1, \ldots, c_m)= \colim (\Theta[1](c_1) \leftarrow \Theta[0] \rightarrow \cdots \leftarrow \Theta[0] \rightarrow \Theta[1](c_m)). \]  There is an inclusion map
\[ se^{(c_1, \ldots, c_m)} \colon G[m](c_1, \ldots, c_n) \rightarrow \Theta[n](c_1, \ldots, c_m). \]  We define the set
\[ Se_{\Theta_n} = \{ se^{(c_1, \ldots, c_m)} \mid m \geq 2, c_1, \ldots c_m \in \ob(\Theta_n)\}. \]

However, being local with respect to these maps is not sufficient for our purposes, as it only gives an up-to-homotopy composition at level $n$.  Encoding lower levels of composition is achieved inductively, using the Segal object model structure on the category of functors $\Theta_n \rightarrow \SSets$.  This procedure is rather technical, and full details can be found in \cite[\S 8]{rezktheta}.  The main point is that, if the model structure on the category of functors $\Theta_{n-1} \rightarrow \SSets$ is obtained by localizing with respect to a set $\mathcal S$ of maps, we can make use of an intertwining functor $V \colon \Theta (\SSets^{\Theta_{n-1}^{op}}) \rightarrow \SSets^{\Thetanop}$ to translate the set $\mathcal S$ into a set $V[1](\mathcal S)$ of maps in $\SSets^{\Thetanop}$.  We will need to localize with respect to this set, in addition to those imposing the new Segal conditions for level $n$.
%

Let $\mathcal S_1=Se_{\bf \Delta}$, and for $n \geq 2$, inductively define $\mathcal S_n=Se_{\Theta_n} \cup V[1](\mathcal S_{n-1})$.

\begin{theorem} \cite[8.5]{rezktheta}
Localizing the model structure $\SSets^{\Thetanop}_c$ with respect to $\mathcal S_n$ results in a cartesian model category whose fibrant objects are higher-order analogues of Segal spaces.
\end{theorem}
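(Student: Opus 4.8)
The plan is to realize this model structure as a left Bousfield localization of $\SSets^{\Thetanop}_c$ and then verify the two asserted properties, cartesianness and the identification of fibrant objects, separately. First I would establish existence. The category $\SSets^{\Thetanop}$ with its Reedy model structure $\SSets^{\Thetanop}_c$ (which by \cite{elegant} agrees with the injective one) is a left proper, cellular simplicial model category, these being standard features of Reedy structures on presheaves over a Reedy category such as $\Thetanop$. Since $\mathcal S_n$ is a genuine set of maps, the left Bousfield localization with respect to $\mathcal S_n$ exists by \cite[4.1.1]{hirsch}, yielding a model structure whose weak equivalences are the $\mathcal S_n$-local equivalences and whose fibrant objects are the $\mathcal S_n$-local objects in the sense of Definition~\ref{local}.

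The substantive part is to prove the localized structure is cartesian. The unlocalized structure $\SSets^{\Thetanop}_c$ is cartesian closed with products formed levelwise, so the pushout-product and unit axioms hold before localizing; the issue is whether they persist. The standard criterion is that $L_{\mathcal S_n}$ remains cartesian provided the class of $\mathcal S_n$-local equivalences is stable under products, which reduces to checking that for each generating map $s \in \mathcal S_n$ and each representable $\Theta[m](c_1,\dots,c_m)$, the map $s \times \mathrm{id}$ is again an $\mathcal S_n$-local equivalence. I would prove this by induction on $n$. For the maps in $Se_{\Theta_n}$ imposing the top-level Segal condition, closure under products is a direct computation, using that each $G[m](c_1,\dots,c_m)$ is a colimit of representables and that local objects satisfy the Segal condition on their mapping objects. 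For the maps in $V[1](\mathcal S_{n-1})$, I would invoke compatibility of the intertwining functor $V \colon \Theta(\SSets^{\Theta_{n-1}^{op}}) \to \SSets^{\Thetanop}$ with the cartesian structures, so as to transport the inductive hypothesis for $\SSets^{\Theta_{n-1}^{op}}$ up one dimension.

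The main obstacle is precisely this stability under products, and within it the interaction between $V$ and the product on $\SSets^{\Thetanop}$. This is delicate because $V[1]$ mixes the outer $\Theta$-direction with the internal $(\infty,n-1)$-categorical data, and one must confirm that $V[1]$ sends a product-stable localizing set to a product-stable one; the bookkeeping of morphisms $f_{ij}$ in the definition of $\Theta\mathcal C$ is what makes the verification technical rather than formal.

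Finally I would identify the fibrant objects. By construction they are the Reedy-fibrant $W$ for which $f^* \colon \Map^h(B,W) \to \Map^h(A,W)$ is a weak equivalence of simplicial sets for every $f \in \mathcal S_n$. Unwinding this, locality with respect to $Se_{\Theta_n}$ forces the level-$n$ Segal maps attached to each inclusion $G[m](c_1,\dots,c_m) \to \Theta[m](c_1,\dots,c_m)$ to be weak equivalences, while locality with respect to $V[1](\mathcal S_{n-1})$ encodes, via the inductive description $\mathcal S_{n-1}=Se_{\Theta_{n-1}} \cup V[1](\mathcal S_{n-2})$, the Segal conditions at all lower levels. Together these exhibit $W$ as a higher-order analogue of a Segal space, completing the identification.
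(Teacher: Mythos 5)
This theorem is not proved in the paper at all: it is quoted verbatim from Rezk's work (\cite[8.5]{rezktheta}), so there is no internal argument to compare yours against, and your proposal has to be measured against the proof in that reference. Your outline does reproduce the overall strategy used there: existence of the localization via \cite[4.1.1]{hirsch} (using that the Reedy $=$ injective structure of \cite{elegant} on $\SSets^{\Thetanop}$ is left proper, cellular, and simplicial), identification of the fibrant objects by unwinding locality with respect to $Se_{\Theta_n}$ and $V[1](\mathcal S_{n-1})$, and reduction of cartesianness to the statement that $s \times \mathrm{id}_X$ is an $\mathcal S_n$-local equivalence for each $s \in \mathcal S_n$ and $X$ representable. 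One point you should make explicit in that reduction: passing from representables to arbitrary $X$ requires writing $X$ as a \emph{homotopy} colimit of representables and using that local objects are closed under homotopy limits (compare the argument the present paper gives for its own cartesianness theorem for $\mathcal L_S(\Theta_nSp)^{\Deltaop}$); a bare colimit of representables does not suffice, since limits of local objects need not be local.

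The genuine gap is that the crux --- product-stability of the localizing set --- is asserted rather than proved, and your account understates what it involves. For the maps in $Se_{\Theta_n}$, showing that $se^{(c_1, \ldots, c_m)} \times \mathrm{id}_{\Theta[q](d_1, \ldots, d_q)}$ is an $\mathcal S_n$-local equivalence is not ``a direct computation'': in \cite{rezktheta} this is the bulk of the technical work, requiring a decomposition (``covering'') analysis of products of representables together with an induction through the wreath-product description of $\Theta_n$; it is delicate enough that the originally published proof of the cartesian property required a subsequently published correction. Likewise, the compatibility of the intertwining functor $V$ with the cartesian structures, which you correctly single out as the obstacle for the maps in $V[1](\mathcal S_{n-1})$, is precisely what must be \emph{established}: $V[1]$ does not preserve products on the nose, and the statement actually needed --- an identification of $W^{V[1](A)}$, for $W$ local, in terms of mapping objects of $W$ valued in $\SSets^{\Theta_{n-1}^{op}}$, so that the inductive hypothesis can be applied --- is a theorem in \cite{rezktheta}, not a formality. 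So your proposal is a correct road map of the known proof, but the two steps on which the theorem actually rests are left unproved, and the phrase ``direct computation'' would not survive contact with either of them.
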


However, we need to incorporate higher-order completeness conditions as well.  To define the maps which respect to which we need to localize, we make use of an Quillen pair
\[ T_\# \colon \SSets^{\Deltaop}_c \rightarrow \SSets^{\Thetanop}_c \colon T^*\] to use known results for simplicial spaces \cite[4.1]{rezktheta}.   In particular, define
\[ Cpt_{\bf \Delta}= \{E \rightarrow \Delta[0]\} \] and, for $n \geq 2$,
\[ Cpt_{\Theta_n}=\{T_\# E \rightarrow T_\# \Delta[0]\}. \]  Let $\mathcal T_1=Se_{\Theta_1} \cup Cpt_{\Theta_1}$ and, for $n \geq 2$,
\[ \mathcal T_n=Se_{\Theta_n} \cup Cpt_{\Theta_n} \cup V[1](\mathcal T_{n-1}). \]

\begin{theorem} \cite[8.1]{rezktheta}
Localizing $\SSets^{\Thetanop}_c$ with respect to the set $\mathcal T_n$ gives a cartesian model category, denoted $\Theta_nSp$.
\end{theorem}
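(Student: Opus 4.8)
The plan is to establish two things: first, that the left Bousfield localization of $\SSets^{\Thetanop}_c$ at $\mathcal{T}_n$ exists as a model category, and second, that this localized structure is cartesian. Existence is the routine part: the Reedy${}={}$injective structure $\SSets^{\Thetanop}_c$ is a presheaf category with cofibrations the monomorphisms, so it is left proper (every object is cofibrant) and cellular, and $\mathcal{T}_n$ is a \emph{set} of maps; thus the localization exists by the general machinery \cite[4.1.1]{hirsch}, with weak equivalences the $\mathcal{T}_n$-local equivalences and fibrant objects the $\mathcal{T}_n$-local objects. The content of the theorem is therefore the assertion that this localization is cartesian.

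For the cartesian claim I would use the standard criterion for when a left Bousfield localization of a cartesian model category remains cartesian. The ambient structure $\SSets^{\Thetanop}_c$ is already cartesian: it is cartesian closed as a presheaf category, products are computed objectwise, monomorphisms are stable under pushout-product, and the terminal object is cofibrant. The localized structure $L_{\mathcal{T}_n}$ then stays cartesian provided that for every $f \in \mathcal{T}_n$ and every generating cofibration $g$ the pushout-product $f \,\square\, g$ is again a $\mathcal{T}_n$-local equivalence; equivalently, that the class of $\mathcal{T}_n$-local objects is closed under forming exponentials $W^X$. Since every object of $\SSets^{\Thetanop}$ is a colimit of representables $\Theta[m](c_1,\ldots,c_m)$, and $\mathcal{T}_n$-local equivalences are closed under the relevant homotopy colimits, it suffices to verify the criterion with $g$ ranging over the boundary inclusions of representables, and in fact it suffices to check that $f \times \Theta[m](c_1,\ldots,c_m)$ is a $\mathcal{T}_n$-local equivalence for each such representable.

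I would then argue by induction on $n$, treating the three families comprising $\mathcal{T}_n = Se_{\Theta_n} \cup Cpt_{\Theta_n} \cup V[1](\mathcal{T}_{n-1})$ separately. The base case $n=1$ is exactly the complete Segal space model structure $\css$, already known to be cartesian by \cite[7.2]{rezk}, so $\mathcal{T}_1$ satisfies the criterion. For the inductive step I would dispatch the Segal family $Se_{\Theta_n}$ by a direct combinatorial computation: the defining pushout presenting $G[m](c_1,\ldots,c_m)$ is preserved, up to local equivalence, by the product with a representable, so the product of each Segal map with a representable is a $\mathcal{T}_n$-local equivalence. The completeness family $Cpt_{\Theta_n} = \{T_\# E \to T_\# \Delta[0]\}$ I would handle using that $T_\# \dashv T^*$ is a Quillen pair together with the monoidal compatibility of $T_\#$ with cartesian products, reducing the claim to the corresponding cartesian statement for $E \to \Delta[0]$ in simplicial spaces, which holds because $\css$ is cartesian. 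The family $V[1](\mathcal{T}_{n-1})$ is where the induction does its real work: here I would invoke the inductive hypothesis that localization at $\mathcal{T}_{n-1}$ is cartesian and transport this across the intertwining functor $V$, using a compatibility of $V$ with cartesian products to conclude that the product of each map in $V[1](\mathcal{T}_{n-1})$ with a representable is a $\mathcal{T}_n$-local equivalence.

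The main obstacle I anticipate is precisely this last family. Verifying the cartesian criterion for $Se_{\Theta_n}$ and $Cpt_{\Theta_n}$ is essentially local and reduces to computations with representables and to the already-established cartesian-ness of $\css$. By contrast, the maps $V[1](\mathcal{T}_{n-1})$ are built by intertwining the entire lower-dimensional localizing set through $V \colon \Theta(\SSets^{\Theta_{n-1}^{op}}) \to \SSets^{\Thetanop}$, and making the induction go through requires a clean statement that $V$ carries the cartesian structure of $\Theta_{n-1}Sp$ to that of $\Theta_nSp$ --- concretely, a lemma computing $V[1](\alpha) \times \Theta[m](c_1,\ldots,c_m)$ in terms of $V$ applied to products formed one level down. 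Establishing that compatibility, and checking that it interacts correctly with the Segal and completeness conditions already imposed at level $n$, is the technical heart of the argument.
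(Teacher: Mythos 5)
The paper itself contains no proof of this statement: it is quoted as background, the proof being Theorem 8.1 of \cite{rezktheta}, so your proposal can only be compared against Rezk's argument there. Your outline reproduces the architecture of that argument faithfully: existence of the localization via Hirschhorn's machinery applied to the left proper, cellular injective ($=$ Reedy) structure; reduction of cartesianness to the pushout-product criterion, equivalently closure of the local objects under exponentials $W^X$, and then further to the assertion that $f \times \Theta[m](c_1, \ldots, c_m)$ is a $\mathcal T_n$-local equivalence for every $f$ in the localizing set and every representable; and an induction on $n$ whose base case is the complete Segal space theorem of \cite{rezk}, treating the three families $Se_{\Theta_n}$, $Cpt_{\Theta_n}$, and $V[1](\mathcal T_{n-1})$ separately, with the last family carrying the induction.

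As a proof, however, the proposal has a genuine gap, and it is exactly the one you flag yourself. The two assertions doing all the work --- that the product of a Segal map with a representable is a local equivalence ``by a direct combinatorial computation,'' and that there is ``a compatibility of $V$ with cartesian products'' making the inductive step go through --- are not citable lemmas; they are the theorem. Products of representables in $\SSets^{\Thetanop}$ decompose in a genuinely complicated way (already $\Theta[1](c) \times \Theta[1](d)$ is a nontrivial colimit of representables indexed by shuffle-type data), and proving that products of the maps in $Se_{\Theta_n}$ and $V[1](\mathcal T_{n-1})$ with representables are $\mathcal T_n$-local equivalences is where essentially the whole technical content of \cite{rezktheta} is concentrated; it is delicate enough that this portion of the argument later required a published correction. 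Your treatment of $Cpt_{\Theta_n}$ likewise leans on an unproved claim that $T_\#$ is compatible with cartesian products, which is not evident for this left adjoint and is not how the reduction to $E \rightarrow \Delta[0]$ actually proceeds in the source. In short, you have correctly reconstructed the roadmap of Rezk's proof and correctly located its hardest point, but the proposal defers that point rather than establishing it, so what you have is an accurate outline of the proof rather than a proof.
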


We refer to the fibrant objects of $\Theta_nSp$ simply as $\Theta_n$-\emph{spaces}.

As complete Segal spaces are known to be equivalent to simplicial categories, establishing them as models for $(\infty, 1)$-categories, $\Theta_{n+1}Sp$ should be Quillen equivalent to a model category whose objects are categories enriched in $\Theta_nSp$, further strengthening the view that they are indeed models for $(\infty, n+1)$-categories.

The existence of the appropriate model structure for enriched categories can be regarded as a special case of a result of Lurie \cite[A.3.2.4]{lurie}.

\begin{theorem} \label{vcat}
There is a cofibrantly generated model structure on the category $\Theta_nSp-Cat$ of small categories enriched in $\Theta_nSp$ in which the weak equivalences $f \colon \mathcal C \rightarrow \mathcal D$ are given by
\begin{itemize}
\item (W1) $\Hom_{\mathcal C}(x,y) \rightarrow \Hom_{\mathcal D}(fx,fy)$ is a weak equivalence in $\Theta_nSp$ for any objects $x,y$, and

\item (W2) $\pi_0 \mathcal C \rightarrow \pi_0 \mathcal D$ is an equivalence of categories, where $\pi_0 \mathcal C$ has the same objects as $\mathcal C$ and $\Hom_{\pi_0 \mathcal C} (x,y) = \Hom_{\Ho(\Theta_nSp)}(1, \mathcal C(x,y))$;
\end{itemize}
and the generating cofibrations are given by
\begin{itemize}
\item (I1) $\{ UA \rightarrow UB \}$ where $U \colon \Theta_nSp \rightarrow \Theta_nSp-Cat$ is the functor taking an object $A$ of $\Theta_nSp$ to the category with two objects $x$ and $y$, $\Hom_{UA}(x,y)=A$ and no other nonidentity morphisms, and $A \rightarrow B$ is a generating cofibration of $V$, and

\item (I2) $\varnothing \rightarrow \{x\}$, where $\{x\}$ denotes the category with one object and only the identity morphism.
\end{itemize}
\end{theorem}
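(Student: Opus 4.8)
The plan is to realize this theorem as an instance of Lurie's general construction of a model structure on enriched categories \cite[A.3.2.4]{lurie}. That construction applies whenever the enriching monoidal model category is \emph{excellent} in the sense of \cite[A.3.2.16]{lurie}, and it produces a cofibrantly generated model structure whose weak equivalences and generating cofibrations are given by exactly the formulas (W1), (W2) and (I1), (I2) recorded above. So the whole proof amounts to checking that $\Theta_nSp$, with its cartesian monoidal structure, is excellent.

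First I would dispatch the formal hypotheses. The category $\Theta_nSp$ is obtained from the presheaf category $\SSets^{\Thetanop}_c$ by left Bousfield localization with respect to the set $\mathcal T_n$, so it is combinatorial. By Proposition \ref{inj} and its $\Theta_n$-analogue proved in \cite{elegant}, cofibrations in $\SSets^{\Thetanop}_c$ are exactly the monomorphisms, so every object there is cofibrant; localization leaves the cofibrations unchanged, so every object of $\Theta_nSp$ is again cofibrant. Consequently the cofibrations are generated by cofibrations between cofibrant objects, the unit object is cofibrant, and $\Theta_nSp$ is left proper (a model category in which every object is cofibrant is automatically left proper). That the cartesian product is a left Quillen bifunctor satisfying the pushout-product and unit axioms is precisely the assertion that the localized structure is cartesian, which is the content of \cite[8.1]{rezktheta} cited above. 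Stability of the weak equivalences under filtered colimits is the one remaining formal condition; it follows from standard properties of left Bousfield localizations of combinatorial model categories together with the compatibility of filtered colimits with levelwise weak equivalences of simplicial sets.

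The one substantive point is the \emph{invertibility hypothesis} \cite[A.3.2.12]{lurie}. Informally this is the requirement that in a fibrant $\Theta_nSp$-enriched category a morphism which is invertible in the homotopy category can be promoted to a coherent homotopy inverse; formally it asks that a certain pushout-product, built out of the two-object free enriched categories $UA$ produced by the functor $U$ of (I1) and an interval, be a weak equivalence of enriched categories. I would verify it by exploiting the cartesian closure of $\Theta_nSp$ together with the completeness maps $Cpt_{\Theta_n}$ built into the localization via $\mathcal T_n$. Completeness is exactly the condition identifying the ``space of objects'' with the ``space of homotopy equivalences,'' and it is this identification that makes homotopy-invertible morphisms invertible up to coherent homotopy. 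Concretely I would reduce the hypothesis to an internal statement in $\Theta_nSp$ about the interval $T_\# E$ appearing in $Cpt_{\Theta_n}$, and check that statement using the explicit generating cofibrations of the Reedy structure and the fact that, since all objects are cofibrant, homotopy pushouts are computed by ordinary pushouts along monomorphisms.

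With excellence verified, the proof concludes by invoking \cite[A.3.2.4]{lurie} and translating its output into the present notation: Lurie's enriched mapping objects become the $\Hom_{\mathcal C}(x,y)$, his homotopy category becomes $\pi_0 \mathcal C$, and his free-category functor becomes $U$. I expect this translation, and the verification of the formal hypotheses, to be routine, with the invertibility hypothesis being the only step that requires genuine work.
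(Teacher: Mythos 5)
Your approach is the paper's approach: the paper offers no proof beyond the sentence preceding the theorem, which says the result ``can be regarded as a special case of a result of Lurie \cite[A.3.2.4]{lurie},'' and your proposal is exactly an invocation of that result together with a verification of its hypotheses. One correction, though, which matters for how much work is actually required. Lurie's Proposition A.3.2.4 does \emph{not} require the enriching category to be excellent; its hypotheses are only that $\Theta_nSp$ be a combinatorial monoidal model category in which every object is cofibrant and whose weak equivalences are stable under filtered colimits. These are precisely the ``formal'' conditions you dispatch in your second paragraph (combinatoriality from localization of a presheaf category, cofibrations $=$ monomorphisms so all objects are cofibrant, the pushout-product axiom from \cite[8.1]{rezktheta}, and filtered-colimit stability of local equivalences). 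Excellence, and in particular the invertibility hypothesis \cite[A.3.2.12]{lurie}, enters only in Lurie's later results (A.3.2.16, A.3.2.24), which characterize the \emph{fibrant objects} of $\Theta_nSp-Cat$ as the locally fibrant categories and make weak equivalences/fibrations between fibrant objects locally detectable. So the step you single out as ``the only step that requires genuine work''---and the only step you sketch too vaguely to count as a proof---is in fact not needed for the existence statement of Theorem \ref{vcat} at all. Your proof is therefore complete for the theorem as stated once the formal hypotheses are checked; the invertibility verification should either be dropped or deferred to wherever one actually needs the identification of fibrant enriched categories.
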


Establishing that $\Theta_nSp-Cat$ is Quillen equivalent to $\Theta_{n+1}Sp$ should be achieved via a chain of Quillen equivalences, of which the ones shown in this paper are the beginning.

We will have need of the following generalizations of the definitions of Segal spaces.

\begin{definition}
A Reedy fibrant functor $W \colon \Deltaop \rightarrow \Theta_nSp$ is a $\Theta_nSp$-\emph{Segal space} if the Segal maps
\[ W_k \rightarrow \underbrace{W_1 \times_{W_0} \cdots \times_{W_0} W_1}_k \] are weak equivalences in $\Theta_nSp$ for all $k \geq 2$.
\end{definition}

\begin{theorem}
There is a cartesian closed model structure $\mathcal L_S (\Theta_nSp)^{\Deltaop}$ on the category of functors $\Deltaop \rightarrow \Theta_nSp$ in which the fibrant objects are precisely the Segal space objects in $\mathcal \Thetansp$.
\end{theorem}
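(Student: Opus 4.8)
The plan is to realize $\mathcal L_S(\Thetansp)^{\Deltaop}$ as a left Bousfield localization of the Reedy model structure on $\Thetaspdelta$. First I would record that $\Thetansp$, being the left Bousfield localization with respect to $\mathcal T_n$ of the left proper, cellular (equivalently combinatorial), cartesian closed model category $\SSets^{\Thetanop}_c$, is itself left proper, cellular, and cartesian closed. Since $\Deltaop$ is a Reedy category, the diagram category $\Thetaspdelta$ then inherits a Reedy model structure which is again left proper and cellular, with weak equivalences the levelwise weak equivalences, cofibrations the Reedy cofibrations, and fibrations the Reedy fibrations; its fibrant objects are exactly the Reedy fibrant functors $\Deltaop \to \Thetansp$. (As in Proposition \ref{inj} and \cite{elegant}, one expects the Reedy and injective structures to coincide here, but only the Reedy structure is needed.)

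Next I would define the localizing set $S$ encoding the Segal condition. For $k \geq 2$ let $G[k] \to \Delta[k]$ denote the usual spine inclusion, regarded as a map of discrete objects in $\Thetaspdelta$, where
\[ G[k] = \Delta[1] \cup_{\Delta[0]} \cdots \cup_{\Delta[0]} \Delta[1]. \]
Because the Segal maps must be weak equivalences in the localized category $\Thetansp$ rather than merely on underlying presheaves, I would take $S$ to consist of the pushout-products $\square$ of these spine inclusions with the generating cofibrations of $\Thetansp$, so that $S$-local objects detect the Segal condition through the $\Thetansp$-enrichment. Left Bousfield localization (\cite[4.1.1]{hirsch}) applied to the left proper, cellular Reedy structure then produces the desired model structure $\mathcal L_S(\Thetansp)^{\Deltaop}$, whose weak equivalences are the $S$-local equivalences and whose fibrant objects are the $S$-local objects.

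Then I would identify the fibrant objects. Being $S$-local means that $\Map^h(s,W)$ is a weak equivalence of simplicial sets for every $s \in S$; unwinding this for $s$ the pushout-product of a spine inclusion $G[k]\to\Delta[k]$ with a generating cofibration $A \to B$ of $\Thetansp$, and using the adjunctions relating the tensoring, cotensoring, and enrichment, shows that a Reedy fibrant $W$ is $S$-local if and only if each Segal map
\[ W_k \to W_1 \times_{W_0} \cdots \times_{W_0} W_1 \]
induces weak equivalences $\Map_{\Thetansp}(C, W_k) \to \Map_{\Thetansp}(C, W_1 \times_{W_0} \cdots \times_{W_0} W_1)$ for all $C$, i.e.\ is a weak equivalence in $\Thetansp$. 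Here one uses that $W$ Reedy fibrant forces the maps $W_1 \to W_0$ to be fibrations, so the displayed fiber product is a homotopy fiber product and the condition is homotopy invariant. Thus the $S$-local objects are precisely the $\Thetansp$-Segal spaces, which is the analogue, with $\Thetansp$ replacing $\SSets$, of Rezk's identification of the fibrant objects of $\sesp$.

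Finally, for cartesian closedness I would verify that the localized structure remains a cartesian monoidal model category, which I expect to be the main obstacle. Since $\Thetaspdelta$ with the Reedy structure is already cartesian closed (inheriting this from the cartesian closed $\Thetansp$ via the levelwise product), it suffices to show that the class of $S$-local equivalences is stable under cartesian product with an arbitrary object, equivalently that $- \times X$ is a left Quillen endofunctor of the localization for every $X$. By a standard reduction this follows once one checks that the pushout-product $s \,\square\, j$ of each $s \in S$ with any Reedy cofibration $j$ is again an $S$-local equivalence, which in turn reduces to a combinatorial property of the spine inclusions $G[k] \to \Delta[k]$ under products with the $\Delta[m]$ and with the generators of $\Thetansp$. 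Controlling this interaction—making the localizing set closed, up to local equivalence, under the cartesian product—is the technical heart of the argument, and mirrors the corresponding step in the proofs that $\sesp$ and $\Thetansp$ are cartesian closed.
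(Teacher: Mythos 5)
Your proposal follows the same overall route as the paper: both obtain $\mathcal L_S(\Thetansp)^{\Deltaop}$ as a left Bousfield localization of the Reedy model structure on $\Thetaspdelta$, and both identify the fibrant objects as the Reedy fibrant diagrams whose Segal maps are weak equivalences in $\Thetansp$. Indeed, you are more explicit than the paper on one real subtlety: the paper only says to localize with respect to ``analogues of the maps used to obtain the Segal space model structure,'' whereas you correctly insist that the spine inclusions must be crossed with the generating cofibrations of $\Thetansp$ (equivalently, with the representables $\Theta[q](c_1,\ldots,c_q)$), since locality against the bare spine inclusions would only impose the Segal condition on underlying spaces rather than on the objects $W_k$ of $\Thetansp$. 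Where you genuinely diverge is the cartesianness argument. You reduce it to showing that pushout-products $s \,\square\, j$ of localizing maps with cofibrations are $S$-local equivalences, and you stop there, labeling this the technical heart. The paper attacks the adjoint side of exactly this condition: since all objects are cofibrant here (Reedy cofibrations coincide with monomorphisms), the functor $- \times X$ preserves local equivalences for every $X$ if and only if $W^X$ is local for every local $W$ and every $X$, and the paper proves the latter by Rezk's scheme: $W^{\Delta[1]}$ is local by a covering argument, $W^{\Delta[k]}$ is a retract of $W^{(\Delta[1])^k}$, constant factors $Y$ are absorbed via $(W^{\Delta[k]})^Y = W^{\Delta[k]\times Y}$, and an arbitrary $X$ is written as a homotopy colimit of objects $\Delta[k]\times Y$, exhibiting $W^X$ as a homotopy limit of local objects. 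This decomposition is what makes the verification tractable, and it is precisely the content your outline defers; your pushout-product formulation is an equivalent criterion, but as written the proposal is incomplete at the one step where the work lies, so to finish you would either have to carry out that combinatorial check directly or pass to the function-object side as the paper does.
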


\begin{proof}
To obtain the model structure, one can localize the Reedy model structure with respect to the analogues of the maps used to obtain the Segal space model structure.  To show that this model structure is cartesian, we follow the same line of argument as used by Rezk in \cite[\S 10]{rezk}.  First, we establish that any function object $W^X$ in $\Thetansp^{\Deltaop}$ is local, where $W^X$ is defined by
\[ (W^X)_{[q](c_1, \ldots, c_q), k} = \Hom(X \times \Theta[q](c_1, \ldots, c_q) \times \Delta[k], Y). \]

Regarding $\Delta[1]$ as a levelwise discrete object of $\Thetansp^{\Deltaop}$, consider the function object $W^{\Delta[1]}$ for any local object $W$.  Proving that $W^{\Delta[1]}$ is again local can be proved just as in Rezk's paper, using the notion of covering.  Then, for any $k \geq 2$, $W^{\Delta[k]}$ can be shown to be a retract of $W^{(\Delta[1])^k}$, establishing that $W^{\Delta[k]}$ is also local.  If $Y$ is any object of $\Thetansp$, regarded as a constant diagram in $\Thetansp^{\Deltaop}$, then $(W^{\Delta[k]})^Y= W^{\Delta[k] \times Y}$ is again local.  Since any object $X$ of $\Thetansp^{\Deltaop}$ can be written as a homotopy colimit of objects of the form $\Delta[k] \times Y$, any object of the form $W^X$ can be written as a homotopy limit of a objects of the form $W^{\Delta[k] \times Y}$, and therefore $W^X$ is local.

To complete the proof that this cartesian structure is compatible with the model structure, we can use the same argument as Rezk, using properties of adjoints.
\end{proof}


%
%
%

\section{Fixed-object $\Thetansp$-Segal categories and their model structures}

In this section, we first recall basic definitions of Segal categories and generalize them to those of $\Thetansp$-Segal categories.  We then go on to establish model structures in the restricted case where all $\Thetansp$-Segal categories have the same set of objects which is preserved by all functions.

\begin{definition} \cite[\S 2]{hs}
A \emph{Segal precategory} is a simplicial space $X$ such that the simplicial set $X_0$ in degree zero is discrete, i.e. a constant simplicial set.
\end{definition}

Again, we can consider the Segal maps
\[\varphi_k: X_k \rightarrow \underbrace{X_1 \times_{X_0} \cdots \times_{X_0} X_1}_k \] for each $k \geq 2$. Since $X_0$ is discrete, the right-hand side is actually a homotopy limit.

\begin{definition} \cite[\S 2]{hs}
A \emph{Segal category} $X$ is a Segal precategory such that each Segal map $\varphi_k$ is a weak equivalence of simplicial sets for $k \geq 2$.
\end{definition}

There is a fibrant replacement functor $L$ taking a Segal precategory $X$ to a Segal category $LX$.  We can think of this functor as a ``localization," even though it is not actually obtained from localization of a different model structure \cite[\S 5]{thesis}.

Weak equivalences in this setting, again called \emph{Dwyer-Kan equivalences}, are the maps $f \colon X \rightarrow Y$
such that the induced map $\map_{LX}(x,y) \rightarrow
\map_{LY}(fx,fy)$ is a weak equivalence of simplicial sets for
any $x, y \in X_0$ and the map $\Ho(LX) \rightarrow \Ho(LY)$
is an equivalence of categories.

\begin{theorem} \cite[5.1, 7.1]{thesis}
There is a model structure $\Secat_c$ on the category of Segal precategories in which the fibrant objects are precisely the Reedy fibrant Segal categories.  The weak equivalences are the Dwyer-Kan equivalences.  There is also a model structure $\Secat_f$ with the same weak equivalences in which the fibrant objects are precisely the projective fibrant Segal categories.
\end{theorem}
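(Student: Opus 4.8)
The plan is to realize each of $\Secat_c$ and $\Secat_f$ as a localization of an auxiliary ``levelwise'' model structure on the category $\Secat$ of Segal precategories, the essential subtlety being that $\Secat$ is \emph{not} itself a localization of $\SSets^{\Deltaop}$. First I would record the formal properties of $\Secat$: it is complete and cocomplete, with limits and colimits created by the inclusion $i \colon \Secat \hookrightarrow \SSets^{\Deltaop}$ (the degree-zero space of a levelwise (co)limit of discrete simplicial sets is again discrete), and it is locally presentable. Since $i$ preserves limits and $\Secat$ is accessible, $i$ admits a left adjoint $D \colon \SSets^{\Deltaop} \rightarrow \Secat$, the \emph{discretization} functor, which replaces $X_0$ by the discrete simplicial set on $\pi_0 X_0$ and forms the corresponding quotient. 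Being a left adjoint, $D$ preserves colimits, and a map $p$ of $\Secat$ has the right lifting property against $D(f)$ exactly when $i(p)$ does against $f$; this is what lets constructions carried out in $\SSets^{\Deltaop}$ be transported into $\Secat$.

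Next I would build the auxiliary structures. For $\Secat_f$, I apply $D$ to the generating cofibrations and generating acyclic cofibrations of the projective structure $\SSets^{\Deltaop}_f$; for $\Secat_c$, I take the cofibrations to be all monomorphisms, which form a cofibrantly generated class because $\Secat$ is locally presentable. Running the small object argument with these generators produces factorizations that \emph{stay inside} $\Secat$ --- this is the whole point of routing through $D$, since the naive factorizations in $\SSets^{\Deltaop}$ destroy discreteness at level zero. Kan's recognition theorem \cite[11.3.1]{hirsch} then yields two cofibrantly generated model structures on $\Secat$ whose weak equivalences are the levelwise weak equivalences, once one checks the acyclicity condition (relative cell complexes on the acyclic generators are levelwise equivalences) and smallness (automatic by local presentability). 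These auxiliary structures are left proper and cellular.

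I would then Bousfield-localize each auxiliary structure at the (transposed) maps inducing the Segal condition, namely those whose local objects are exactly the $X$ for which the Segal maps $\varphi_k \colon X_k \rightarrow X_1 \times_{X_0} \cdots \times_{X_0} X_1$ are weak equivalences for $k \geq 2$. By \cite[4.1.1]{hirsch} the localizations exist; the local (fibrant) objects are then the Reedy (resp.\ projective) fibrant Segal categories, matching the claimed descriptions, and the two resulting structures $\Secat_c$ and $\Secat_f$ share the same underlying category and the same weak equivalences while differing in their cofibrations.

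The main obstacle is the identification of the localized weak equivalences with the Dwyer--Kan equivalences defined through the Segalification functor $L$. For this I would show that $X \rightarrow LX$ is a weak equivalence, that $LX$ is a Segal category, and that between Segal categories a map $f$ is a localization-equivalence if and only if it induces weak equivalences $\map_{LX}(x,y) \rightarrow \map_{LY}(fx,fy)$ on all mapping spaces together with an equivalence $\Ho(LX) \rightarrow \Ho(LY)$ of homotopy categories. This reduction is where essentially all of the homotopy-theoretic content lies, and it is the step I expect to be hardest, since it requires controlling $L$ (built by hand via iterated pushouts along the Segal maps, rather than as a model-categorical fibrant replacement) and understanding how level-zero discreteness substitutes for the completeness condition that plays the analogous role for complete Segal spaces. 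Finally, the identity functors relate the two structures: $\Secat_c$ has the larger class of cofibrations (all monomorphisms) and correspondingly the smaller class of fibrant objects, with the weak equivalences of $\Secat_f$ unchanged.
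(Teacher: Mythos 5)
Your plan founders at the Bousfield localization step, and the failure is structural rather than technical. In any left Bousfield localization, a map between $S$-local objects is an $S$-local equivalence if and only if it is a weak equivalence in the underlying model structure \cite[3.2.13]{hirsch}. In your construction the underlying weak equivalences are the levelwise ones, and by your own identification the $S$-local objects are the Reedy (resp.\ projective) fibrant Segal categories. But a levelwise weak equivalence of Segal precategories is in particular a bijection on the discrete space at level zero, whereas Dwyer--Kan equivalences between fibrant Segal categories need only be essentially surjective: the inclusion $\Delta[0] \rightarrow E^t$ of a point into the levelwise-discrete nerve of the free-standing isomorphism is a Dwyer--Kan equivalence between Reedy fibrant Segal categories that is not a levelwise equivalence. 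Hence no localization of any model structure on the category of Segal precategories with levelwise weak equivalences can have the Dwyer--Kan equivalences as its weak equivalences; the step you flag as ``hardest'' --- identifying the localized equivalences with the Dwyer--Kan equivalences --- is not hard but false. Nor can you repair this by enlarging $S$ to force maps like $\Delta[0] \rightarrow E^t$ to become equivalences, since that would shrink the class of local objects below the class of all Reedy fibrant Segal categories, contradicting the stated description of the fibrant objects. This is exactly why the paper remarks that the Segalification functor $L$ ``is not actually obtained from localization of a different model structure,'' and it is why the localization route does work in the fixed-object-set setting (Sections 4 and 5, where all maps are bijective on objects by fiat) but cannot work here.

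The proof in \cite{thesis} (mirrored for $\Theta_n$-spaces in Theorem \ref{cversion} of this paper) therefore proceeds differently: the weak equivalences are \emph{defined} to be the Dwyer--Kan equivalences, using a functor $L$ built by a restricted small object argument (only the localizing maps with $p \geq 1$ are used, precisely so that pushouts preserve discreteness at level zero); the cofibrations are taken to be the monomorphisms for $\Secat_c$, resp.\ the cellular maps on the set $I_f$ for $\Secat_f$; and the model structure is then established by verifying the hypotheses of Smith's recognition theorem in Beke's formulation \cite[4.1]{beke}. The key input is the analogue of Lemmas \ref{mapping} and \ref{rlpic}: a map with the right lifting property against the reduced generating cofibrations is surjective on objects and an acyclic fibration on each fiber $X_p(v_0, \ldots, v_p)$, hence a Dwyer--Kan equivalence. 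Your preliminary steps are sound and do match the paper's: your discretization functor $D$ is the reduction functor $(-)_r$, and applying it to the projective (resp.\ Reedy) generating cofibrations is how $I_f$ (resp.\ $I_c$) is produced, modulo the caveat that some reduced Reedy generators fail to remain monomorphisms and must be discarded. What must be abandoned is the idea that the weak equivalences arise by localizing a levelwise structure.
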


\begin{theorem} \cite[7.5, 8.6]{thesis}
There is a chain of Quillen equivalences
\[ \mathcal {SC} \leftrightarrows \Secat_f \rightleftarrows \Secat_c \]
where $\mathcal {SC}$ denotes the model structure on the category of simplicial categories.
\end{theorem}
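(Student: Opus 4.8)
The plan is to treat the chain as two separate Quillen equivalences and handle them by quite different methods. The comparison $\Secat_f \rightleftarrows \Secat_c$ is the more routine one, since both model structures have the same underlying category of Segal precategories and, by construction, the same weak equivalences, namely the Dwyer-Kan equivalences. The relevant adjunction is therefore the identity--identity pair. The projective cofibrations form a subclass of the Reedy cofibrations, which by Proposition \ref{inj} are exactly the monomorphisms, so the identity $\Secat_f \to \Secat_c$ preserves cofibrations and, dually, the identity $\Secat_c \to \Secat_f$ preserves fibrations; hence we have a Quillen pair. A Quillen pair between two model structures with the same class of weak equivalences is automatically a Quillen equivalence, because the two total derived functors are then mutually inverse equivalences of the common homotopy category.

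The equivalence $\mathcal{SC} \leftrightarrows \Secat_f$ requires genuine work. I would take the right adjoint $N \colon \mathcal{SC} \to \Secat_f$ to be the nerve, sending a simplicial category $\mathcal C$ with object set $O$ to the Segal precategory whose space of $k$-simplices is $\coprod_{(x_0,\dots,x_k)\in O^{k+1}} \Map(x_0,x_1)\times\cdots\times\Map(x_{k-1},x_k)$, which is discrete in degree zero and satisfies the Segal condition on the nose. Its left adjoint $F$, a rigidification functor, is obtained by left Kan extension, sending representables to the appropriate free simplicial categories and extending by colimits. First I would verify that $(F,N)$ is a Quillen pair, which amounts to checking that $N$ carries fibrations and acyclic fibrations of $\mathcal{SC}$ to projective fibrations and acyclic fibrations of Segal precategories, or equivalently that $F$ sends the generating (acyclic) cofibrations of $\Secat_f$ to (acyclic) cofibrations in $\mathcal{SC}$; this is a finite check once the generators are made explicit. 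To promote this to a Quillen equivalence I would analyze the derived unit and counit: for a simplicial category $\mathcal C$, which is automatically fibrant, the derived counit $F(N\mathcal C)^{\mathrm{cof}} \to \mathcal C$ must be a Dwyer-Kan equivalence, and for a cofibrant Segal precategory $X$ the map $X \to N(FX)$ must become a weak equivalence after the fibrant replacement $L$.

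The main obstacle is precisely this last step: controlling the effect of the rigidification $F$ on mapping spaces. The comparison of homotopy categories demanded by the second clause of the Dwyer-Kan condition is comparatively formal once the mapping spaces are understood, so the crux is to show that $F$ preserves the weak homotopy type of the mapping spaces of a cofibrant Segal category. I expect to establish this by an inductive argument along the skeletal filtration of a cofibrant Segal precategory, reducing to the free case, where the mapping spaces of $FX$ can be computed explicitly as iterated products along chains of composable $1$-simplices, and then checking that these reproduce the homotopy-coherent composition already present in $LX$. Cofibrancy is essential here, since it guarantees that the strictification introduces no further identifications; the bookkeeping of which cofibrant objects suffice, together with the verification that the derived functors only see these, is where the care is required.
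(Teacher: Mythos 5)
Your treatment of $\Secat_f \rightleftarrows \Secat_c$ is correct and is essentially the cited argument: identity functors, identical weak equivalences, and the cofibrations of $\Secat_f$ are monomorphisms and hence cofibrations in $\Secat_c$, so the identity is left Quillen out of $\Secat_f$ and the equivalence is immediate. One small correction there: the relevant fact is not Proposition \ref{inj} (which concerns the Reedy structure on all simplicial spaces) but the fact that in $\Secat_c$ the cofibrations are \emph{defined} to be the monomorphisms of Segal precategories; the category of Segal precategories is not a diagram category, so ``projective'' and ``Reedy'' structures are not literally available and both model structures must be built by hand.

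The gap is in the equivalence $\mathcal{SC} \leftrightarrows \Secat_f$, and it sits exactly where you locate ``the main obstacle.'' First, your proposed verification of the Quillen pair --- that $F$ sends generating (acyclic) cofibrations to (acyclic) cofibrations, ``a finite check once the generators are made explicit'' --- is not available: $\Secat_f$ is produced by a Beke/Smith-type recognition theorem with the Dwyer--Kan equivalences as weak equivalences, so generating acyclic cofibrations exist only abstractly and there is no explicit set to check $F$ against. In the cited proof, and in this paper's generalization (Proposition \ref{qpair}), preservation of cofibrations is indeed checked on the explicit set $I_f$, but preservation of \emph{acyclic} cofibrations is obtained by a completely different mechanism: the fixed-object-set Quillen equivalence, which rests on rigidification of homotopy algebras over the multi-sorted algebraic theory $\Tocat$ (Badzioch \cite{bad}, extended in \cite{multisort}). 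Your proposal never invokes this result nor supplies a substitute for it. Second, the substitute you do sketch --- skeletal induction reducing to free objects whose mapping spaces are ``iterated products along chains of composable $1$-simplices'' --- underestimates the central difficulty: pushouts in $\mathcal{SC}$ do not act on mapping spaces in any straightforward way (mapping spaces of a pushout of simplicial categories are built from arbitrary alternating words, which is precisely the Dwyer--Kan strictification problem), so the inductive step is not bookkeeping but the entire content of the theorem. The cited proof circumvents exactly this by the algebraic-theories machinery together with the strictly-local-object analysis (here Lemma \ref{cofibrant}), writing a cofibrant object as a colimit of objects $\coprod_i B_{[p_i]}$ that are shown to be \emph{strictly} local, so that $F$ changes nothing up to Dwyer--Kan equivalence. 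A further small error: not every simplicial category is fibrant in $\mathcal{SC}$, as your counit argument assumes; the fibrant objects are those whose mapping spaces are Kan complexes.
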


We would like to generalize these definitions and their corresponding model structures to $\Thetansp$-Segal categories; the goal of this paper is to prove the analogue of the previous theorem in this setting.

\begin{definition}
A $\Theta_nSp$-\emph{Segal precategory} is a functor $X \colon \Deltaop \rightarrow \Theta_nSp$ such that $X_0$ is a discrete object in $\Theta_nSp$, i.e., a constant $\Theta_n$-diagram of sets.  It is a $\Theta_nSp$-\emph{Segal category} if, additionally, the Segal maps
\[\varphi_k: X_k \rightarrow \underbrace{X_1 \times_{X_0} \cdots \times_{X_0} X_1}_k \] are weak equivalences in $\Theta_nSp$ for all $k \geq 2$.
\end{definition}

We denote by $\Theta_nSp^{\Deltaop}_{disc}$ the category of $\Theta_nSp$-Segal precategories.  Notice that if the Segal maps for $X$ are isomorphisms in $\Theta_nSp$, then $X$ is just a $\Theta_nSp$-category.

In the remainder of this section, we seek to define model structures on the category of functors $X \colon \Deltaop \rightarrow \Theta_nSp$ with the additional requirement that $X_0= \mathcal O$, the discrete object of $\Theta_nSp$ given by the a fixed set $\mathcal O$, and such that all maps between such functors are required to be the identity on this set.  We denote this category $\Theta_nSp^{\Deltaop}_\mathcal O$.

\begin{prop} \label{fixedf}
There is a model structure on $\Theta_nSp^{\Deltaop}_\mathcal O$ with levelwise weak equivalences and fibrations in $\Theta_nSp$, denoted by $\Theta_nSp^{\Deltaop}_{\mathcal O,f}$
\end{prop}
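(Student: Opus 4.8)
The plan is to obtain the model structure by lifting the levelwise structure across a free--forgetful adjunction, following the fixed-object-set argument for Segal precategories in the case $n=1$ \cite{thesis} and substituting $\Theta_nSp$ for $\SSets$ throughout. The starting point is that, since $X_0=\mathcal O$ is discrete, each $X_k$ splits along its vertices as a coproduct $X_k=\coprod_{(a_0,\dots,a_k)\in\mathcal O^{k+1}}X_{(a_0,\dots,a_k)}$ of objects of $\Theta_nSp$. Thus an object of $\Theta_nSp^{\Deltaop}_{\mathcal O}$ amounts to a family of objects $X_{(a_0,\dots,a_k)}$ for $k\geq 1$ equipped with the face and degeneracy operators relating them, the level-zero components being fixed at the point. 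This exhibits $\Theta_nSp^{\Deltaop}_{\mathcal O}$ as sitting inside an adjunction $F\colon\mathcal P\leftrightarrows\Theta_nSp^{\Deltaop}_{\mathcal O}\colon U$, where $\mathcal P=\prod_{k\geq 1}\prod_{\mathcal O^{k+1}}\Theta_nSp$, the functor $U$ records the components at levels $k\geq 1$, and $F$ is its left adjoint, the free $\mathcal O$-fixed simplicial object on a family of components.

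I would then apply the standard lifting theorem for cofibrantly generated model categories (see, e.g., \cite[11.3.2]{hirsch}). The category $\Theta_nSp^{\Deltaop}_{\mathcal O}$ is complete and cocomplete, and $\Theta_nSp$ is cofibrantly generated, being a left Bousfield localization of the cofibrantly generated Reedy model structure on $\SSets^{\Thetanop}$ \cite[4.1.1]{hirsch}; hence the product $\mathcal P$ is cofibrantly generated, with generating cofibrations $I$ and generating acyclic cofibrations $J$ each supported in a single factor. By construction a map $f$ satisfies that $Uf$ is a levelwise weak equivalence, respectively fibration, in $\Theta_nSp$ exactly when $f$ is a levelwise weak equivalence, respectively fibration; the condition at level zero is automatic, every map there being the identity. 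Taking $FI$ and $FJ$ as the candidate generating cofibrations and acyclic cofibrations, the small object argument applies because every object of $\Theta_nSp$ is small and $F$ preserves smallness.

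The step that requires genuine work --- and the one I expect to be the main obstacle --- is the acyclicity hypothesis of the lifting theorem, namely that $U$ carries every relative $FJ$-cell complex to a weak equivalence. I would verify this directly rather than through a path-object argument. Applying $F$ to a generating acyclic cofibration $j\colon A\to B$ of $\Theta_nSp$ placed in a single factor $(k_0,\vec a_0)$ yields a map that is, in each level and component, a coproduct of copies of $j$ indexed by the simplicial operators into $(k_0,\vec a_0)$, together with identities on the contributions arising from the fixed level-zero point; since coproducts of acyclic cofibrations with identities are again acyclic cofibrations in $\Theta_nSp$, the map $F(j)$ is a levelwise acyclic cofibration. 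As the pushouts and transfinite compositions building a relative $FJ$-cell complex are computed componentwise at levels $k\geq 1$, and levelwise acyclic cofibrations of $\Theta_nSp$ are closed under these operations, every $FJ$-cell is a levelwise acyclic cofibration, so $U$ of it is in particular a levelwise weak equivalence. The care needed here is entirely combinatorial: one must check how the face and especially the degeneracy operators interact with the pinned object $\mathcal O$ at level zero, so as to confirm that these colimits really are computed componentwise and that the fixed level-zero data introduces no unintended identifications. Granting this, \cite[11.3.2]{hirsch} yields the desired cofibrantly generated model structure $\Theta_nSp^{\Deltaop}_{\mathcal O,f}$.
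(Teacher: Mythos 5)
Your construction is, in substance, the paper's own: the left adjoint $F$ applied to a generating (acyclic) cofibration of $\mathcal P$ concentrated in the single factor indexed by $(p,\xu)$ is exactly the paper's pushout object, i.e.\ $F(j)$ is the map $A_{[p],\xu} \rightarrow B_{[p],\xu}$ obtained by collapsing $A \times (\Delta[p]_\xu)_0$ onto $(\Delta[p]_\xu)_0$, so your sets $FI$ and $FJ$ coincide with the paper's $I_{\mathcal O,f}$ and $J_{\mathcal O,f}$, and your componentwise verification of the acyclicity condition is essentially the verification the paper delegates to the simplicial case \cite[3.7]{simpmon}. So the adjunction framing is a legitimate repackaging rather than a different route.

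There is, however, a genuine gap in the sentence asserting that ``by construction'' $Uf$ is a weak equivalence (resp.\ fibration) exactly when $f$ is a levelwise weak equivalence (resp.\ fibration). What the transfer along $F \dashv U$ produces are the \emph{componentwise} classes: $Uf$ is a fibration in $\mathcal P$ if and only if each fiber map $X_p(v_0,\ldots,v_p) \rightarrow Y_p(fv_0,\ldots,fv_p)$ is a fibration in $\Thetansp$, whereas the proposition asserts that each $X_p \rightarrow Y_p$, which is the coproduct over $\mathcal O^{p+1}$ of these fiber maps, is a fibration. The bridge ``componentwise fibration implies levelwise fibration'' is precisely the statement that coproducts of fibrations in $\Thetansp$ are fibrations, which is Proposition \ref{fibrations}(2) of the paper; this is not a formal consequence of the model category axioms (weak equivalences in $\Thetansp$ are local equivalences, not levelwise ones, and in a general left Bousfield localization fibrancy is not even preserved by coproducts of objects), and the paper devotes Section \ref{fibrationproof} to proving it, using the connectedness of the objects $\Delta[m]$ appearing in the generating acyclic cofibrations of $\Thetansp$. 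The same issue arises for weak equivalences: the direction ``componentwise implies levelwise'' follows because all objects of $\Thetansp$ are cofibrant, but the detection direction ``levelwise implies componentwise'' again reduces to such coproduct-compatibility statements. So your argument does construct a cofibrantly generated model structure, but identifying it with the one stated in Proposition \ref{fixedf} requires Proposition \ref{fibrations} (or an equivalent statement about coproducts in $\Thetansp$) as an input; declaring that identification to hold ``by construction'' is where the proof is incomplete.
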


To prove this theorem, first notice that limits and colimits can be understood in this category just as they are in \cite[3.5,3.6]{simpmon}.  We then need sets of generating cofibrations and generating acyclic cofibrations for this proposed model structure.  The constructions here are generalizations of those for ordinary Segal categories \cite[\S 3]{simpmon}.

Just as we did in the case for simplicial sets, we begin by finding suitable sets of generating cofibrations and generating acyclic cofibrations for the projective model structure on the category $\Theta_nSp^{\Deltaop}$ of all functors $X \colon \Deltaop \rightarrow \Theta_nSp$.  By definition, a map $f \colon X \rightarrow Y$ in our proposed model structure is an acyclic fibration if and only if, for each $p \geq 0$, the map $f_p \colon X_p \rightarrow Y_p$ has the right lifting property with respect to every generating cofibration $A \rightarrow B$ in $\Theta_nSp$.  This condition is equivalent to the having a lift in the following diagram, for any $A \rightarrow B$ as above and $p \geq 0$:
\[ \xymatrix{A \times \Delta[p] \ar[r] \ar[d] & X \ar[d]^\simeq \\
B \times \Delta[p] \ar[r] \ar@{-->}[ur] & Y. } \]  Thus, we can regard the set of such maps
\[ A \times \Delta[p] \rightarrow B \times \Delta[p] \] as a suitable set of generating cofibrations for $\Theta_nSp$.
Similarly, $f$ is a fibration if and only if each $f_p$ has the right lifting property with respect to every generating acyclic cofibration $C \rightarrow D$ in $\Theta_nSp$.  It follows by arguments like the ones given above that a set of generating cofibrations consists of the maps
\[ C \times \Delta[p] \rightarrow D \times \Delta[p]. \]

Because the (constant) $\Theta_n$-space at level zero must be preserved, we need a distinct simplex of each dimension corresponding to each tuple of objects of $\mathcal O$.  Thus, for any $\xu=(x_0, \ldots, x_p) \in \mathcal O^{p+1}$, we define $\Delta[p]_{\xu}$ to be the $p$-simplex $\Delta[p]$, regarded as an object of $\Thetansp^{\Deltaop}_{disc}$, with $(\Delta[p]_{\xu})_0=\xu$.  Notice here that we assume that $\xu$ is ordered by the usual ordering on iterated face maps.  This object $\Delta[p]_\xu$ also contains all elements of $\mathcal O$ as 0-simplices.  It remains to find an appropriate means of assuring that each object involved in our generating (acyclic) cofibrations is in fact discrete in degree zero.

For any object $A$ in $\Theta_nSp$, $p\geq 0$, and $\xu \in \mathcal O^{p+1}$, define the object $A_{[p],\xu}$ to be the pushout of the diagram
\[ \xymatrix{A \times (\Delta[p]_{\xu})_0 \ar[r] \ar[d] & A \times \Delta[p]_\xu \ar[d] \\
(\Delta[p]_\xu)_0 \ar[r] & A_{[p],\xu}. } \]

Thus, we define sets
\[ I_{\mathcal O,f}= \{A_{[p], \xu} \rightarrow B_{[p],\xu} \mid p \geq 0, A \rightarrow B \text{ a generating cofibration in } \Theta_nSp \} \] and
\[ J_{\mathcal O,f}= \{C_{[p], \xu} \rightarrow D_{[p], \xu} \mid p \geq 0, C \rightarrow D \text{ a generating acyclic cofibration in } \Theta_nSp \}. \]

Given these generating sets, Proposition \ref{fixedf} can be proved just as in the simplicial case \cite[3.7]{simpmon}.

Now, we turn to the other model structure with levelwise weak equivalences, where we instead have levelwise cofibrations.  A useful fact is the following.

\begin{prop}
The Reedy and injective model structures on $\Theta_nSp^{\Deltaop}$ coincide.
\end{prop}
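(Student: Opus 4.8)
The plan is to reduce the statement to a comparison of cofibrations. By construction the injective and Reedy model structures on $\Thetaspdelta$ both have as their weak equivalences the levelwise weak equivalences of $\Thetansp$; hence if I show that they have the same cofibrations, they automatically have the same acyclic cofibrations, the same fibrations (the maps with the right lifting property against acyclic cofibrations), and the same acyclic fibrations, and so coincide as model structures. Thus the entire content is the identity \emph{Reedy cofibration} $=$ \emph{injective cofibration} in $\Thetaspdelta$.

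First I would record that the cofibrations of $\Thetansp$ are precisely the monomorphisms of $\SSets^{\Thetanop}$. Indeed, by \cite{elegant} the Reedy and injective model structures on $\SSets^{\Thetanop}$ agree, so, exactly as in Proposition \ref{inj}, their common cofibrations are the monomorphisms; and since $\Thetansp$ is a localization of $\SSets^{\Thetanop}_c$, which leaves the class of cofibrations unchanged, it has the same cofibrations. Consequently an injective cofibration in $\Thetaspdelta$ --- a levelwise cofibration of $\Deltaop$-diagrams --- is exactly a levelwise monomorphism, i.e. a monomorphism in the presheaf category underlying $\Thetaspdelta$.

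The main step is then to identify the Reedy cofibrations of $\Thetaspdelta$ with these levelwise monomorphisms, and here I would invoke the elegance of $\Deltaop$. A map $X \rightarrow Y$ is a Reedy cofibration precisely when each relative latching map $X_p \cup_{L_p X} L_p Y \rightarrow Y_p$ is a cofibration of $\Thetansp$, i.e. a monomorphism in $\SSets^{\Thetanop}$. Because $\Deltaop$ is an elegant Reedy category, as established in \cite{elegant}, the relative latching maps of a diagram are monomorphisms if and only if the map is a levelwise monomorphism. The one point needing care is that this elegance statement is usually phrased for set-valued presheaves, whereas here the coefficients lie in the presheaf category $\SSets^{\Thetanop}$; this is harmless, since latching objects are formed as colimits and colimits in a presheaf category are computed objectwise, so monicity of the relative latching maps in $\SSets^{\Thetanop}$ may be checked objectwise and thereby reduced to the set-valued statement. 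Combining this with the previous paragraph yields Reedy cofibration $=$ levelwise monomorphism $=$ injective cofibration, which is what we want.

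I expect the verification that the elegance of $\Deltaop$ transports across the presheaf coefficients $\SSets^{\Thetanop}$ --- equivalently, that the relevant latching objects may be computed objectwise --- to be the only genuinely delicate point, all of the combinatorial substance having been packaged into the elegance results of \cite{elegant}. An alternative organization, which avoids treating the coefficients separately, is to observe that a functor $\Deltaop \rightarrow \Thetansp$ is a presheaf on the product Reedy category $\Delta \times \Theta_n \times \Delta$, that a product of elegant Reedy categories is again elegant, and that iterating the Reedy construction one factor at a time agrees with the Reedy structure over the product; this again identifies the Reedy cofibrations over the outer $\Deltaop$ with the monomorphisms.
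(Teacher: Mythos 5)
Your proposal is correct, but it takes a genuinely different route from the paper's proof. Both arguments come down to showing that the Reedy cofibrations of $\Thetansp^{\Deltaop}$ are exactly the levelwise cofibrations, and both use (at least implicitly) the fact, quoted from \cite{elegant}, that cofibrations in $\Thetansp$ are precisely the monomorphisms of $\SSets^{\Thetanop}$, since localization does not change cofibrations. The difference lies in how the hard direction (levelwise cofibration implies Reedy cofibration) is established. The paper does this by a hands-on combinatorial analysis of codegeneracies in $\Theta_n$, distinguishing ``vertical'' and ``horizontal'' degeneracies, proving analogues of \cite[15.8.4--15.8.6]{hirsch} (latching objects are the degenerate parts; a cell is nondegenerate if and only if its degeneracies are distinct), and deducing that $X_m \cap L_m Y = L_m X$, so that the relative latching maps are monomorphisms; in effect it re-proves elegance-type statements for $\Theta_n$ by hand. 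You bypass all of this: your key observation is that latching objects, being colimits, are computed objectwise in the presheaf category $\SSets^{\Thetanop}$, and monomorphisms there are detected objectwise, so the Reedy-cofibrancy condition over the outer $\Deltaop$ can be checked one object of $\Theta_n$ (and one inner simplicial degree) at a time; this reduces the entire question to the classical Eilenberg--Zilber property of ${\bf \Delta}$ itself, i.e., essentially to Proposition \ref{inj}. Thus in your argument the only input concerning $\Theta_n$ is the identification of its cofibrations with monomorphisms, and no new combinatorics of $\Theta_n$ degeneracies is needed at all. What each approach buys: yours is shorter, more formal, and cleanly decouples the inner $\Theta_n$-direction from the outer simplicial direction, making clear that the proposition is really a statement about $\Deltaop$-diagrams with values in any presheaf-type model category whose cofibrations are the monomorphisms; the paper's version, while longer and with the two directions somewhat intertwined in its write-up, develops the explicit degeneracy structure of $\Theta_n$ that underlies the general elegance results of \cite{elegant}. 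Your preliminary reduction (same weak equivalences plus same cofibrations implies identical model structures) is sound, as is the alternative organization via product Reedy categories that you sketch at the end.
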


\begin{proof}
The fact that Reedy cofibrations are levelwise cofibrations in $\Thetansp$ follows from a general result about Reedy categories \cite[15.3.11]{hirsch}.  Therefore, it remains to prove that if $f \colon X \rightarrow Y$ in $\Thetansp^{\Deltaop}$ satisfies the condition that $f_n \colon X_n \rightarrow Y_n$ is a cofibration in $\Thetansp$, then $f$ is a Reedy cofibration.

We first need to understand what a ``codegeneracy" is in $\Theta_n$.  For simplicity, we look at $\Theta_2$.  Given an object $[k](c_1, \ldots ,c_k)$ in $\Theta_2$, there are two kinds of codegeneracies.  The first is given by a codegeneracy of a $c_i$, regarding $c_i$ as an object of ${\bf \Delta}$.  Using a ``pasting diagram" interpretation of $\Theta_2$, such a codegeneracy amounts to collapsing one of the 2-cells at horizontal position $i$.  Thus, when we take a simplicial presheaf on $\Theta_2$, the corresponding degeneracy gives a degenerate 2-cell in a position specified by the degeneracy map of the $c_i$ in $\Deltaop$.  We think of such degeneracies as ``vertical" degeneracies.

There is also a kind of ``horizontal" degeneracy, but we do not want to allow all such.  Given an object $[k](c_1, \ldots ,c_k)$, a horizontal degeneracy would be given by a codegeneracy of $[k]$ in $\Delta$.  But, if we took the $i$th codegeneracy of $[k]$, where $c_i>0$, then we would, in effect, we collapsing multiple cells.  Thus, we only want to consider such codegeneracies when $c_i=0$, i.e., the case where there are no 2-cells in position $i$.

In either case, however, a degeneracy is given by a degeneracy in $\Deltaop$, and therefore our result about degeneracies in $\Deltaop$ continues to hold in $\Theta_2^{op}$.  This argument can be rephrased as an inductive one, so that it is in fact true for all $\Thetanop$.

Now, we establish an analogue of \cite[15.8.6]{hirsch} in this situation, namely, that, for every $m \geq 0$, the latching object $L_mX$ is isomorphic to the subobject of $X_m$ consisting of higher-order simplices, i.e., objects of $\Hom(\Theta[m](c_1, \ldots, c_m), X)$, which are in the image of a degeneracy operator.  However, this fact follows from \cite[15.8.4]{hirsch} and the existence of a map from $(L_mX)_{[k](c_1, \ldots, c_k)}$ to the degenerate elements of $X_{\ast, [k](c_1, \ldots, c_k)}$.

Using this above description of codegeneracies in $\Theta_n$, we have the analogue of \cite[15.8.5]{hirsch}, that for any object $W$ of $\Thetansp$, if $k \geq 0$, $\sigma \in W_{[k](c_1, \ldots, c_k)}$ is nondegenerate if and only if no two degeneracies of $\sigma$ are equal.  Therefore, it follows that the intersection of $X_m$ and $L_mY$ in $Y_m$ is precisely the object $L_mX$.  Therefore, the latching map $X_m \amalg_{L_mX} L_mY \rightarrow Y_n$ is an monomorphism in $\Thetansp$, which is precisely the requirement for $f$ to be a Reedy cofibration.
\end{proof}

Thus, we can use the Reedy structure to understand precise sets of generating cofibrations and generating acyclic cofibrations, but we also know that cofibrations are precisely the monomorphisms and in particular that all objects are cofibrant.

\begin{prop}
There is a model structure on $\Theta_nSp^{\Deltaop}_\mathcal O$ with levelwise weak equivalences and cofibrations in $\Theta_nSp$, denoted by $\Theta_nSp^{\Deltaop}_{\mathcal O,c}$
\end{prop}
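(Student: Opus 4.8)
The plan is to construct this structure as a cofibrantly generated model category, in exact parallel with the projective case of Proposition \ref{fixedf} and with the construction of $\Secat_c$ for ordinary Segal categories \cite{simpmon}, but now taking as our starting point the Reedy (equivalently, by the preceding proposition, injective) model structure on the full diagram category $\Thetansp^{\Deltaop}$ rather than the projective one. As already noted, limits and colimits in $\Thetansp^{\Deltaop}_{\mathcal O}$ are computed as in \cite[3.5, 3.6]{simpmon}, so the category is complete and cocomplete enough to run the recognition theorem; moreover the levelwise weak equivalences satisfy two-out-of-three and are closed under retracts because they are so in $\Thetansp$. The class of weak equivalences being fixed, the real content is to produce generating sets $I_{\mathcal O,c}$ and $J_{\mathcal O,c}$ whose cofibrations are precisely the levelwise cofibrations.

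First I would record the generators for the Reedy structure on the unrestricted category $\Thetansp^{\Deltaop}$. Since $\Thetansp$ is cofibrantly generated and cartesian, these are the pushout-product maps
\[ (A \times \Delta[p]) \cup_{A \times \partial\Delta[p]} (B \times \partial\Delta[p]) \longrightarrow B \times \Delta[p], \]
where $A \rightarrow B$ runs over the generating cofibrations of $\Thetansp$ and $p \geq 0$, together with the analogous pushout-products of the generating acyclic cofibrations $C \rightarrow D$ of $\Thetansp$ with the boundary inclusions $\partial\Delta[p] \rightarrow \Delta[p]$; this is the evident higher analogue of the generating sets for $\SSets^{\Deltaop}_c$ recalled in Section 2. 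I would then transport these into $\Thetansp^{\Deltaop}_{\mathcal O}$ by applying, for each tuple $\xu \in \mathcal O^{p+1}$, the level-zero discretization already used to define $A_{[p],\xu}$: replacing $\Delta[p]$ by $\Delta[p]_{\xu}$ and $\partial\Delta[p]$ by its discretized analogue, and collapsing the level-zero part of each product down to $\mathcal O$ exactly as in the pushout defining $A_{[p],\xu}$. This yields sets $I_{\mathcal O,c}$ and $J_{\mathcal O,c}$ of maps in $\Thetansp^{\Deltaop}_{\mathcal O}$.

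With the generators in hand, the verification of the axioms follows the template of \cite[\S 3]{simpmon}. One identifies the $I_{\mathcal O,c}$-injective maps with the levelwise acyclic fibrations and the $I_{\mathcal O,c}$-cofibrations with the levelwise cofibrations; here the latching-object description from the preceding proposition is exactly what is needed, and it also shows that every object is cofibrant. After checking that the domains of the generators are small, the remaining and decisive point is the acyclicity clause of the recognition theorem \cite[11.3.1]{hirsch}: that every relative $J_{\mathcal O,c}$-cell complex is a levelwise weak equivalence. This is the step I expect to be the main obstacle, because the discretizing pushouts are not levelwise products, so the levelwise behavior of a cell attachment cannot be read off directly. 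As in the simplicial case, I would handle it by showing that such an attachment modifies the levels $\geq 1$ by pushouts along generating acyclic cofibrations of $\Thetansp$ while leaving level zero equal to $\mathcal O$, and then invoke left properness of $\Thetansp$ together with the compatibility of these pushouts with the Reedy latching objects to conclude that the result is a levelwise weak equivalence. Once acyclicity is established, the recognition theorem yields the desired model structure $\Thetansp^{\Deltaop}_{\mathcal O,c}$.
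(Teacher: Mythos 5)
Your proposal is correct and is essentially the paper's own proof: the paper likewise starts from the Reedy pushout-product generators, discretizes their degree-zero parts (using the reduction functor $(-)_r$ rather than your $A_{[p],\xu}$-style pushout collapse, though the two agree on these maps in simplicial degrees $p \geq 1$; this is also why the paper restricts its sets $I_{\mathcal O,c}$ and $J_{\mathcal O,c}$ to $p \geq 1$, since reduction turns the $p=0$ maps into non-monomorphisms, whereas your collapse merely turns them into harmless identities), indexes everything by tuples $\xu \in \mathcal O^{p+1}$, and then quotes the simplicial-case verification \cite[3.9]{simpmon} exactly as you do. One small correction to your sketch: the $I_{\mathcal O,c}$-injectives are not the \emph{levelwise} acyclic fibrations but the Reedy-type ones (characterized by matching objects, since this is an injective-type structure); only the inclusion of the $I_{\mathcal O,c}$-injectives into the levelwise weak equivalences holds, and that inclusion is all the recognition theorem actually needs.
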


To define sets $I_{c, \mathcal O}$ and $J_{c, \mathcal O}$ which will be our candidates for generating cofibrations and generating acyclic cofibrations, respectively, we first recall the generating cofibrations and acyclic cofibrations in the Reedy model structure.  The generating cofibrations are the
maps
\[ A \times \Delta [p] \cup B \times \partial
\Delta [p] \rightarrow B \times \Delta [p] \] for all
$p \geq 0$ and $A \rightarrow B$ generating cofibrations in $\Theta_nSp$, and similarly the generating acyclic cofibrations
are the maps
\[ C \times \Delta [p] \cup C \times \partial \Delta
[p] \rightarrow D \times \Delta [p] \] for all $p
\geq 0$ and $C \rightarrow D$ generating acyclic cofibrations in $\Theta_nSp$ \cite[15.3]{hirsch}.

To modify these maps, we begin by considering the category $\Theta_nSp^{\Deltaop}_{disc}$ of all Segal precategory objects in $\Theta_nSp$ and the inclusion functor
$\Theta_nSp^{\Deltaop}_{disc} \rightarrow \Theta_nSp^{\Deltaop}$. This functor has a left
adjoint which we call the reduction functor. Given an object $X$ of $\Theta_nSp^{\Deltaop}$, we denote its reduction by $(X)_r$. Reducing $X$ essentially amounts to collapsing the space $X_0$ to its set of components and making the appropriate changes to degeneracies in
higher degrees.  So, we start by reducing the objects defining the
Reedy generating cofibrations and generating acyclic cofibrations
to obtain maps of the form
\[ (A \times \Delta [p] \cup B \times \partial \Delta [p])_r \rightarrow (B \times \Delta [p])_r \]
and
\[ (C \times \Delta [p] \cup D \times \partial \Delta [p])_r \rightarrow (D \times \Delta [p])_r \] Then,
in order to have our maps fix the object set $\mathcal O$, we define a separate such
map for each choice of vertices $\xu$ in degree zero and adding in
the remaining points of $\mathcal O$ if necessary.  As above, we
use $\Delta [p]_\xu$ to denote the object $\Delta [p]$ with
the $(p+1)$-tuple $\xu$ of vertices. We then define sets
\[ I_{\mathcal O, c} = \{(A \times \Delta [p]_\xu \cup B \times \partial \Delta [p]_\xu)_r \rightarrow (B \times
\Delta [p]_\xu)_r \} \] for all $p \geq 1$ and $A \rightarrow B$, and
\[ J_{\mathcal O, c} = \{(C \times \Delta [p]_\xu \cup D \times \partial \Delta [p]_\xu)_r \rightarrow (D \times \Delta
[p]_\xu)_r\} \] for all $p \geq 1$ and $C \rightarrow D$, where the notation $(-)_\xu$ indicates the specified vertices.

Then, the proof that we do in fact get a model structure can be proved just as in \cite[3.9]{simpmon}.

However, these two model structures are not enough.  We need to localize them so that their fibrant objects are Segal category objects, following \cite{rezk}.  Fortunately, this process can be done just as in the $n=1$ case.  Define a map $\alpha^i:[1] \rightarrow [p]$ in ${\bf
\Delta}$ such that $0 \mapsto i$ and $1 \mapsto i+1$ for each $0
\leq i \leq p-1$. Then for each $p$ defines the object
\[ G(p)= \bigcup_{i=0}^{p-1} \alpha^i \Delta [1] \] and the inclusion map $\varphi^p:
G(p) \rightarrow \Delta [p]$.  To obtain the Segal model structure from the Reedy model structure on the category of functors $\Deltaop \rightarrow \Theta_nSp$, the localization is with
respect to the coproduct of inclusion maps
\[ \varphi = \coprod_{p \geq 0} (G(p) \rightarrow \Delta [p]). \]

However, in our case, the objects $G(p)$ and $\Delta [p]$ do  not preserve the object set.  As before, we can replace $\Delta
[p]$ with the objects $\Delta [p]_\xu$, where $\xu =(x_0,
\ldots ,x_p)$ and define
\[ G(p)_\xu = \bigcup_{i=0}^{p-1} \alpha^i \Delta
[1]_{x_i, x_{i+1}}. \] Now, we need to take coproducts not only
over all values of $p$, but also over all $p$-tuples of vertices.  Here, we can regard these objects as giving a diagram of constant $\Theta_n$-spaces.

Thus, we localize with respect to the set of maps
\[ \{G[p]_\xu \rightarrow \Delta[p]_\xu \mid p \geq 0, \xu \in \mathcal O^{p+1} \}. \]  Applying this localization to the model structure $\Theta_nSp^{\Deltaop}_{\mathcal O, f}$ gives a model structure which we denote $\mathcal L(\Thetansp)^{\Deltaop}_{\mathcal O,f}$, and similarly from $\Theta_nSp^{\Deltaop}_{\mathcal O,c}$ we obtain the localized model structure $\mathcal L (\Theta_nSp)^{\Deltaop}_{\mathcal O,c}$.

\section{Rigidification of algebras over algebraic theories}

In this section we generalize work of Badzioch \cite{bad} and the first-named author \cite{multisort} concerning rigidification of simplicial algebras over algebraic theories.  These results, which give us a convenient framework for understanding fixed-object simplicial categories, were used to establish the Quillen equivalence between the model structures for simplicial categories and Segal categories.  To apply these results to our higher-categorical situation, we need similar results to hold when we take functors into more general categories.

We begin with a review of algebraic theories and simplicial algebras over them.

\begin{definition} \cite{multisort}
Given a set $S$, an $S$-\emph{sorted algebraic theory} (or \emph{multi-sorted theory}) $\mathcal T$ is a small category with objects $T_{\alphau^n}$ where $\alphau^n = \langle \alpha_1, \ldots
,\alpha_n \rangle$ for $\alpha_i \in S$ and $n \geq 0$ varying, and such that each $T_{\alphau^n}$ is equipped with an isomorphism
\[ T_{\alphau^n} \cong \prod_{i=1}^n T_{\alpha_i}. \]
For a particular $\alphau^n$, the entries $\alpha_i$ can repeat, but they are not ordered.  In other words, $\alphau^n$ is a an $n$-element subset with multiplicities.  There exists a terminal
object $T_0$ corresponding to the empty subset of $S$.
\end{definition}

\begin{definition} \label{talg}
Given an $S$-sorted theory $\mathcal T$, a \emph{(strict simplicial)} $\mathcal T$-\emph{algebra in} $\Theta_nSp$ is a product-preserving functor $A:\mathcal T \rightarrow \Theta_nSp$.  In other words, the canonical map
\[ A(T_{\alphau^n}) \rightarrow \prod_{i=1}^n A(T_{\alpha_i}), \]
induced by the projections $T_{\alphau^n} \rightarrow T_{\alpha_i}$ for all $1 \leq i \leq n$, is an isomorphism in $\Theta_nSp$.
\end{definition}

We denote the category of strict $\mathcal T$-algebras in $\Theta_nSp$ by $\Algt_{\Theta_n}$.

\begin{definition}
Given an $S$-sorted theory $\mathcal T$, a \emph{homotopy} $\mathcal T$-\emph{algebra in} $\Theta_nSp$ is a functor $X:\mathcal T \rightarrow \Theta_nSp$ which preserves products up to homotopy, i.e.,
for all $\alpha \in S^n$, the canonical map
\[ X(T_{\alphau^n}) \rightarrow \prod_{i=1}^n X(T_{\alpha_i}) \]
induced by the projection maps $T_{\alphau^n} \rightarrow T_{\alpha_i}$ for each $1 \leq i \leq n$ is a weak equivalence in $\Theta_nSp$.
\end{definition}

Given an $S$-sorted theory $\mathcal T$ and $\alpha \in S$, there is an evaluation functor
\[ U_\alpha \colon \Algt_{\Theta_n} \rightarrow \Theta_nSp \] given by
\[ U_\alpha(A)=A(T_\alpha). \]
Define a weak equivalence in the category $\Algt_{\Theta_n}$ to be a map $f \colon A \rightarrow B$ such that $U_\alpha(f) \colon U_\alpha(A) \rightarrow U_\alpha(B)$ is a weak equivalence in $\Theta_nSp$ for all $\alpha \in S$.  Similarly, define a fibration of $\mathcal T$-algebras to be a map $f$ such that $U_{\alpha}(f)$ is a fibration in $\mathcal  M$ for all $\alpha$. Then define a cofibration to be a map with the left lifting property with respect to the maps which are fibrations and weak equivalences.

The following theorem is a generalization of a result by Quillen \cite[II.4]{quillen}.

\begin{prop}
There is a model structure on the category $\Algt_{\Theta_n}$ with weak equivalences and fibrations given by evaluation functors $U_\alpha$ for all $\alpha \in S$.
\end{prop}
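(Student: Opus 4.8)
The plan is to obtain this model structure by transfer along a free--forgetful adjunction, following the strategy used for simplicial algebras in \cite{multisort}. Let
\[ U \colon \Algt_{\Theta_n} \rightarrow \prod_{\alpha \in S} \Theta_nSp \]
be the forgetful functor sending an algebra $A$ to the tuple $(A(T_\alpha))_{\alpha \in S}$; since a $\mathcal T$-algebra is a product-preserving functor, this records exactly the data on which the weak equivalences and fibrations in the statement are defined. Because $\mathcal T$ is an algebraic theory and $\Theta_nSp$ is a locally presentable category with all limits and colimits, $U$ admits a left adjoint $F$, the free $\mathcal T$-algebra functor. The product category $\prod_{\alpha \in S}\Theta_nSp$ carries the product model structure, which is cofibrantly generated because $\Theta_nSp$ is; let $I$ and $J$ denote its generating cofibrations and generating acyclic cofibrations. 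By definition, the weak equivalences and fibrations of $\Algt_{\Theta_n}$ are precisely the maps that $U$ carries into weak equivalences, respectively fibrations, of this product structure, so what we must produce is the transferred model structure, and the natural candidates for its generating sets are $FI$ and $FJ$.

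First I would verify the hypotheses of Kan's lifting theorem (for instance \cite[11.3.2]{hirsch}). The domain $\Algt_{\Theta_n}$ is locally presentable, being the category of models of an algebraic theory in a locally presentable category, so every object is small and the small object argument applies to both $FI$ and $FJ$. One also checks that $U$ preserves filtered colimits: filtered colimits of $\mathcal T$-algebras are computed objectwise, since finite products commute with filtered colimits in $\Theta_nSp$, so a filtered colimit of product-preserving functors is again product-preserving. These facts reduce the existence of the transferred structure to a single acyclicity condition, namely that $U$ sends every relative $FJ$-cell complex to a weak equivalence in $\prod_{\alpha \in S}\Theta_nSp$.

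The main obstacle is this acyclicity condition, and I would handle it by a path-object argument in the spirit of Quillen \cite[II.4]{quillen}. Since $\Theta_nSp$ is a cartesian closed model category obtained by localizing a simplicial presheaf category, it is a simplicial model category, so for fibrant $A$ the cotensor $A^{\Delta[1]}$, formed objectwise, is a functorial path object. Because cotensoring with a fixed simplicial set preserves products, $A^{\Delta[1]}$ again lies in $\Algt_{\Theta_n}$, and the factorization
\[ A \rightarrow A^{\Delta[1]} \rightarrow A \times A \]
of the diagonal becomes, after applying $U$, a levelwise weak equivalence followed by a levelwise fibration. A functorial path object of this kind is exactly what is needed to run the standard argument: it shows that any map with the left lifting property against all $U$-fibrations is a weak equivalence, whence relative $FJ$-cell complexes, being such maps up to retract, are acyclic.

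Finally, I would assemble these pieces: the transferred structure has fibrations and weak equivalences created by $U$ (equivalently, by the $U_\alpha$), cofibrations defined by the left lifting property against acyclic fibrations as in the statement, and generating sets $FI$ and $FJ$. As an alternative to carrying out the path-object argument by hand, one may appeal to general transfer results for cofibrantly generated model structures on categories of algebras over an algebraic theory in a sufficiently nice model category, which specialize directly to this setting; either route establishes the claim.
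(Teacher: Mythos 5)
Your strategy coincides with the paper's: its proof is a one-line citation of the simplicial case \cite[4.7]{multisort}, which is itself a transfer along the free--forgetful adjunction in the style of Quillen \cite[II.4]{quillen}, and your setup (adjunction with the $S$-indexed product, Kan's lifting theorem \cite[11.3.2]{hirsch}, smallness via local presentability, preservation of filtered colimits, generating sets $FI$ and $FJ$) is exactly what that citation unpacks to. Those steps are correct.

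The gap is in your acyclicity argument. Functorial path objects for fibrant algebras are \emph{not} ``exactly what is needed'': Quillen's argument requires, in addition, a fibrant replacement functor \emph{inside} $\Algt_{\Theta_n}$, i.e.\ a functor $R$ with a natural map $A \to RA$ that is a levelwise weak equivalence to a levelwise fibrant algebra. The argument first lifts $A \to RA$ against the fibration $RA \to \ast$ to produce a retraction-up-to-homotopy, and only afterwards does a path object (of the fibrant object $RB$) come into play; if $A$ and $B$ are not fibrant --- and a general relative $FJ$-cell complex is not --- the path objects you constructed are never applicable, so the argument does not start. Moreover, $R$ cannot be manufactured by the small object argument applied to $FJ$: knowing that the resulting map $A \to RA$ is a weak equivalence is precisely the acyclicity statement being proved, so that route is circular. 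In the simplicial case this missing ingredient is Kan's $\mathrm{Ex}^\infty$ functor, which preserves finite products and hence carries $\mathcal T$-algebras in $\SSets$ to $\mathcal T$-algebras while providing a natural fibrant replacement; this is what makes Quillen's proof, and hence \cite[4.7]{multisort}, go through. For $\Thetansp$ you must exhibit an analogue: a fibrant replacement functor on $\Thetansp$ (where fibrant means injectively fibrant and local) preserving finite products, so that it restricts to $\Algt_{\Theta_n}$. The usual fibrant replacement in $\Thetansp$ has no reason to preserve products, and your proposal offers no substitute; your closing appeal to ``general transfer results'' has the same defect, since their hypotheses (product-compatible fibrant replacement, or admissibility-type conditions) are exactly this missing point. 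Supplying such a functor, or replacing the path-object argument by a direct analysis of pushouts along maps of $FJ$ in $\Algt_{\Theta_n}$, is the genuinely nontrivial content of the proposition --- and is the kind of subtlety the authors themselves defer to \cite{enrich} for more general cartesian presheaf categories.
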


\begin{proof}
The proof follows just as it does for algebras in $\SSets$ \cite[4.7]{multisort}.
\end{proof}

Let $\Theta_nSp^\mathcal T_f$ denote the category of functors $\mathcal T \rightarrow \Theta_nSp$ with model structure given by levelwise weak equivalences and fibrations.  Similarly, let $\Theta_nSp^\mathcal T_c$ denote the same category with model structure given by levelwise weak equivalences and cofibrations.  Since the objects of $\Theta_nSp$ are simplicial presheaves, in particular presheaves of sets, we can regard the set of maps
\[ P=\{p_{\alphau^n} \colon \coprod_{i=1}^n \Hom_\mathcal T (T_{\alpha_i},-) \rightarrow \Hom_\mathcal T(T_{\alphau^n},-) \} \] as defining a set of maps in $\Theta_nSp$ given by constant diagrams.  Then, we have model structures $\mathcal L (\Theta_nSp)^\mathcal T_f$ and $\mathcal L(\Thetansp)^\mathcal T_c$ given by localizing the model structures $\Theta_nSp^\mathcal T_f$ and $\Theta_nSp^\mathcal T_c$ with respect to this set of maps.  The following proposition generalizes \cite[4.9]{multisort}.

\begin{prop}
There is a model category structure $\mathcal L \Theta_nSp^\mathcal T$ on the category $\Theta_nSp^\mathcal T$ with weak equivalences the $P$-local equivalences, cofibrations as in $\SSetst_f$, and fibrations the maps which have the right lifting property with respect to the maps which are cofibrations and weak equivalences.
\end{prop}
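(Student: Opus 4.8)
The plan is to obtain $\mathcal L\Theta_nSp^{\mathcal T}$ as the left Bousfield localization of the projective model structure $\Theta_nSp^{\mathcal T}_f$ with respect to the set $P$, following the proof of \cite[4.9]{multisort} but supplying the extra input required when the diagrams take values in $\Theta_nSp$ rather than in $\SSets$. Recall that $\Theta_nSp^{\mathcal T}_f$, with its levelwise weak equivalences and fibrations, is cofibrantly generated: its generating cofibrations and generating acyclic cofibrations are obtained by applying the left adjoints $F_T$ of the evaluation functors at the objects $T$ of $\mathcal T$ to the generating (acyclic) cofibrations of $\Theta_nSp$. Since a left Bousfield localization leaves the cofibrations unchanged and characterizes its new fibrations by the right lifting property against those maps that are simultaneously cofibrations and local equivalences, a successful localization automatically produces exactly the three classes named in the statement.

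To invoke the localization theorem \cite[4.1.1]{hirsch}, I first verify that $\Theta_nSp^{\mathcal T}_f$ is sufficiently nice, i.e. left proper and cellular. Both properties reduce to the corresponding properties of $\Theta_nSp$ itself, since the projective model structure on a category of diagrams indexed by the small category $\mathcal T$ inherits left properness and cellularity from its target \cite{hirsch}. This is the step with no counterpart in the simplicial case, where these hypotheses hold for free. Here they follow from the construction of $\Theta_nSp$ recalled in Section 3: it is a left Bousfield localization, with respect to $\mathcal T_n$, of the injective (equivalently Reedy) model structure $\SSets^{\Thetanop}_c$ on presheaves on $\Theta_n^{op}$. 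Such presheaf categories are left proper and cellular, and both properties are preserved under left Bousfield localization \cite{hirsch}; hence $\Theta_nSp$ is left proper and cellular, and therefore so is $\Theta_nSp^{\mathcal T}_f$.

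With these hypotheses established, I apply \cite[4.1.1]{hirsch} to localize $\Theta_nSp^{\mathcal T}_f$ at $P$, where each map of $P$ is regarded as a map of constant $\Theta_n$-diagrams, hence as a morphism of $\Theta_nSp^{\mathcal T}$. The theorem yields a model structure whose weak equivalences are the $P$-local equivalences, whose cofibrations coincide with those of $\Theta_nSp^{\mathcal T}_f$, and whose fibrations are the maps with the right lifting property with respect to the maps that are both cofibrations and weak equivalences; this is precisely $\mathcal L\Theta_nSp^{\mathcal T}$. As in \cite{multisort}, the maps of $P$ are designed so that the $P$-local objects are exactly the functors $\mathcal T \rightarrow \Theta_nSp$ preserving products up to weak equivalence, i.e. the (levelwise fibrant) homotopy $\mathcal T$-algebras; this identification, which the subsequent rigidification results require, follows from Yoneda-type descriptions of the relevant homotopy function complexes $\Map^h(\Hom_{\mathcal T}(T_{\alphau^n},-),W)$ and $\Map^h(\coprod_i \Hom_{\mathcal T}(T_{\alpha_i},-),W)$ in terms of $W(T_{\alphau^n})$ and $\prod_i W(T_{\alpha_i})$.

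The main obstacle is the middle step: checking that $\Theta_nSp$ is left proper and cellular. Unlike in \cite{multisort}, this cannot be taken as automatic, and one must trace both properties through the chain of Bousfield localizations that defines $\Theta_nSp$ in \cite{rezktheta} and then through the passage to the projective diagram category over $\mathcal T$.
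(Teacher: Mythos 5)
Your proposal takes essentially the same route as the paper: the paper constructs $\mathcal L(\Theta_nSp)^{\mathcal T}_f$ precisely as the left Bousfield localization of the projective model structure $\Theta_nSp^{\mathcal T}_f$ with respect to the set $P$, regarded as maps of constant diagrams, and states the proposition as a direct generalization of \cite[4.9]{multisort} without spelling out further details. Your verification that $\Theta_nSp$ (being a localization of the injective, equivalently Reedy, presheaf structure) is left proper and cellular, and that these properties pass to the projective diagram category so that \cite[4.1.1]{hirsch} applies, correctly supplies the hypotheses the paper leaves implicit.
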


Here, we use a slight modification of this theorem as follows.  We define a model structure analogous to $\mathcal L \Theta_nSp^\mathcal T$ but on the category of functors $\mathcal T \rightarrow \Theta_nSp$ which send $T_0$ to $\Delta [0]$, as in \cite[3.11]{simpmon}.

\begin{prop}
Consider the category of functors $\mathcal T \rightarrow \Theta_nSp$ such that the image of $T_0$ is $\Delta [0]$.  There is a model category structure on $\mathcal L(\Thetansp)^\mathcal T_*$ in which the in which the fibrant objects are homotopy $\mathcal T$-algebras in $\Theta_nSp$.
\end{prop}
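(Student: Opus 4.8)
The plan is to obtain this model structure by transferring $\mathcal L\Theta_nSp^\mathcal T$ across a reflective inclusion, following the argument for simplicial sets in \cite[3.11]{simpmon}. Write $\Theta_nSp^\mathcal T_*$ for the full subcategory of $\Theta_nSp^\mathcal T$ on those functors sending $T_0$ to $\Delta[0]$. Since $\Delta[0]$ is the terminal object of $\Theta_nSp$ and evaluation at $T_0$ preserves limits, this subcategory is closed under all limits formed in $\Theta_nSp^\mathcal T$; hence the inclusion $i \colon \Theta_nSp^\mathcal T_* \rightarrow \Theta_nSp^\mathcal T$ admits a left adjoint $L$, which we think of as a reduction functor exactly as in Section 4. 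Consequently $\Theta_nSp^\mathcal T_*$ is complete and cocomplete, with limits computed as in $\Theta_nSp^\mathcal T$ and colimits obtained by applying $L$ to the ambient colimit.

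First I would define the weak equivalences and fibrations of $\mathcal L(\Thetansp)^\mathcal T_*$ to be those maps $f$ for which $i(f)$ is a $P$-local equivalence, respectively a fibration, in $\mathcal L\Theta_nSp^\mathcal T$, and take the cofibrations to be the maps with the left lifting property against the acyclic fibrations. Equivalently, one uses the reduced generating sets $L(I)$ and $L(J)$, where $I$ and $J$ are the generating cofibrations and generating acyclic cofibrations of $\mathcal L\Theta_nSp^\mathcal T$; these are the analogues of the reduced generating maps constructed in Section 4. The existence of the model structure then follows from the standard recognition theorem for cofibrantly generated model categories applied to these sets, using the completeness and cocompleteness just established together with the smallness of the domains of $L(I)$ and $L(J)$.

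The step I expect to be the main obstacle is verifying the acyclicity condition of that recognition theorem, namely that every relative $L(J)$-cell complex is a weak equivalence. This is where the interaction between the reduction functor $L$ and the localized model structure must be controlled: one must check that pushouts along maps in $L(J)$, after applying $i$, remain $P$-local equivalences in $\mathcal L\Theta_nSp^\mathcal T$. The argument here is formally identical to the simplicial one, and it goes through because $\Theta_nSp$ is a cofibrantly generated, cartesian, left proper model category with all the formal properties used in \cite[3.11]{simpmon}; the only real work is to transcribe that argument with $\SSets$ replaced by $\Theta_nSp$.

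Finally, to identify the fibrant objects, note that since fibrations and weak equivalences are created by $i$, an object of $\mathcal L(\Thetansp)^\mathcal T_*$ is fibrant precisely when it is fibrant in $\mathcal L\Theta_nSp^\mathcal T$ and sends $T_0$ to $\Delta[0]$. The fibrant objects of $\mathcal L\Theta_nSp^\mathcal T$ are the levelwise fibrant, $P$-local functors $W$; and by the Yoneda computations $\Map^h(\Hom_\mathcal T(T_{\alphau^n},-),W) \simeq W(T_{\alphau^n})$ together with $\Map^h(\coprod_{i=1}^n \Hom_\mathcal T(T_{\alpha_i},-),W) \simeq \prod_{i=1}^n W(T_{\alpha_i})$, the $P$-locality condition says exactly that each canonical map $W(T_{\alphau^n}) \rightarrow \prod_{i=1}^n W(T_{\alpha_i})$ is a weak equivalence in $\Theta_nSp$. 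Hence the fibrant objects are precisely the homotopy $\mathcal T$-algebras in $\Theta_nSp$ sending $T_0$ to $\Delta[0]$, as claimed.
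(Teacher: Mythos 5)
Your setup (the reduction functor $L$ left adjoint to the inclusion $i$, completeness and cocompleteness of the subcategory, and the Yoneda identification of fibrant objects with homotopy $\mathcal T$-algebras) is fine, and your final paragraph is the standard argument. The genuine gap is the step you yourself flag as the main obstacle, and it cannot be closed by ``transcribing the simplicial argument,'' because the simplicial argument the paper invokes does not do what you propose. You transfer the already-localized structure $\mathcal L\Theta_nSp^{\mathcal T}$ along $L \dashv i$, with generating sets $L(I)$ and $L(J)$. But $J$, the set of generating acyclic cofibrations of a left Bousfield localization, is only known abstractly, so there is no explicit description against which to verify that relative $L(J)$-cell complexes are $P$-local equivalences. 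Worse, the reduction functor genuinely misbehaves on maps whose domains and codomains are not already reduced: collapsing the value at $T_0$ can destroy homotopical information, and your sets $L(I)$, $L(J)$ are not even the ones the paper uses. The paper's own Section 6 exhibits exactly this pathology in the parallel $\Deltaop$ setting, where reducing perfectly good generating cofibrations produces maps such as $\Delta[0] \amalg \Delta[0] \rightarrow \Delta[0]$ that are not monomorphisms and must be discarded by hand; relatedly, Section 4 notes that the fibrant replacement functor for Segal categories ``is not actually obtained from localization of a different model structure.'' So the acyclicity condition, as you have set it up, is not a routine verification but precisely the point where a naive transfer of a localized structure breaks.

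The construction the paper intends (following \cite[3.11]{simpmon}, and mirroring its own Section 4) reverses the order of operations to avoid this. One first establishes the \emph{unlocalized} levelwise model structure on the category of functors sending $T_0$ to $\Delta[0]$, using hand-crafted generating sets: not $L$ applied to generating maps, but pushout constructions in the style of $A_{[p],\xu}$ from Section 4, in which only the product of $A$ with the degree-zero (here, $T_0$-level) part is collapsed. These maps remain cofibrations, and the acyclicity check for them is feasible because the generating acyclic cofibrations of the levelwise structure are explicit. Only then does one localize this restricted structure with respect to the (suitably reduced) set $P$; at that stage one never needs explicit generating acyclic cofibrations of the localized structure, since local objects and local equivalences are defined through homotopy function complexes. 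If you swap your two steps---restrict first with the pushout-style generating sets, then localize---your outline becomes the intended proof.
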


The main theorem of this section is the following, and its proof follows just as in the case of $\SSets$.

\begin{theorem}
There is a Quillen equivalence of model categories
\[ \xymatrix@1{L:\mathcal L (\Theta_nSp)^\mathcal T_{*,f} \ar@<.5ex>[r] & \Algt_{\Theta_n}:N. \ar@<.5ex>[l]} \]
\end{theorem}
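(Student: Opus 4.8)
The plan is to verify the two hypotheses of Hovey's criterion \cite[1.3.16]{hovey}: a Quillen pair is a Quillen equivalence as soon as the right adjoint reflects weak equivalences between fibrant objects and the derived unit is a weak equivalence on every cofibrant object. Here $N$ is the forgetful inclusion sending a strict $\mathcal T$-algebra $A$ to its underlying functor $\mathcal T \to \Theta_nSp$; since $A$ preserves products strictly it sends $T_0$ to the terminal object $\Delta[0]$ and is in particular a homotopy $\mathcal T$-algebra, hence a fibrant object of $\mathcal L(\Thetansp)^{\mathcal T}_{*,f}$. The left adjoint $L$ is the rigidification (strictification) functor. Throughout I would follow Badzioch \cite{bad} and the first-named author \cite{multisort}, checking at each step only that the replacement of $\SSets$ by $\Theta_nSp$ causes no trouble.

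First I would check that $(L,N)$ is a Quillen pair. It is routine that the unlocalized adjunction $L \colon \Thetansp^{\mathcal T}_{*,f} \rightleftarrows \Algt_{\Theta_n} \colon N$ is a Quillen pair, since $N$ carries the evaluationwise (acyclic) fibrations that define the strict-algebra model structure to levelwise (acyclic) fibrations of functors. To descend to the localized structure I would appeal to the universal property of left Bousfield localization \cite{hirsch}: it suffices that $L$ send each localizing map $p_{\alphau^n}$ in $P$ to a weak equivalence in $\Algt_{\Theta_n}$. But $L$ turns the coproduct-to-product comparison of representables into an \emph{isomorphism} of strict algebras, because strictification forces the product-preservation property to hold on the nose; these maps therefore become isomorphisms, a fortiori weak equivalences, so $L$ factors through $\mathcal L(\Thetansp)^{\mathcal T}_{*,f}$ as a left Quillen functor.

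Next, $N$ reflects (indeed preserves) weak equivalences between fibrant objects. Weak equivalences of strict algebras are defined by the evaluation functors $U_\alpha$, and weak equivalences of fibrant homotopy algebras are detected by the same evaluations at the generating sorts $T_\alpha$; thus a map of strict algebras is a weak equivalence precisely when $N$ of it is. This is essentially immediate from the definitions.

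The heart of the argument, and the step I expect to be the main obstacle, is showing that for every cofibrant $X$ in $\mathcal L(\Thetansp)^{\mathcal T}_{*,f}$ the unit $X \to NLX$ is a weak equivalence. I would reduce to the case of free homotopy $\mathcal T$-algebras: a cofibrant object is a retract of a cell complex built from the generating cofibrations, and since $L$ is left Quillen it preserves the relevant homotopy colimits, so it is enough to verify the unit on the free generators. On a free object the comparison between a homotopy algebra and its strictification reduces to the assertion that the homotopy colimit (bar construction) computing the free algebra commutes, up to weak equivalence, with the finite products appearing in the operations of $\mathcal T$. In the simplicial-set case this commutation is exactly the crux of Badzioch's rigidification theorem, and the only new feature here is that the target is $\Theta_nSp$ rather than $\SSets$. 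This is precisely where I would use that $\Theta_nSp$ is a \emph{cartesian closed} model category: because $(-)\times Y$ is a left adjoint and the product satisfies the pushout–product axiom, the products distribute over the bar-construction homotopy colimits exactly as they do in $\SSets$, so the arguments of \cite{bad} and \cite{multisort} go through verbatim. Combining the three verifications with \cite[1.3.16]{hovey} then yields the asserted Quillen equivalence.
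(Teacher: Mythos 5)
Your proposal is correct and takes essentially the same approach as the paper: the paper's proof is literally the assertion that the rigidification argument of Badzioch \cite{bad} and \cite{multisort} for $\SSets$ carries over with $\Thetansp$ as target, which is exactly the outline you reconstruct (Quillen pair via the universal property of the localization, with $L$ sending the maps in $P$ to isomorphisms; reflection of weak equivalences between fibrant objects; derived unit verified by cell induction reducing to free algebras). Your identification of the cartesian closedness of $\Thetansp$ as the property that makes the product/homotopy-colimit commutation in Badzioch's key lemma go through is precisely the point the paper leaves implicit in its one-line proof.
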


We now look at the algebraic theory that is of use here, namely the theory $\Tocat$ of categories with fixed object set $\mathcal O$. Consider the category $\mathcal {OC}at$ whose objects are the small categories with a fixed object set $\mathcal O$ and whose morphisms are the functors which are the identity on the objects. There is a theory $\mathcal T_{\mathcal {OC}at}$ associated to this category. Given an element $(\alpha, \beta) \in \mathcal O \times \mathcal O$, consider the directed graph with vertices the elements of $\mathcal O$ and with a single edge starting at $\alpha$ and ending at $\beta$.  The objects of $\Tocat$ are isomorphism classes of categories which are freely generated by coproducts of such directed graphs   In other words, this theory is $(\mathcal O \times \mathcal O)$-sorted.

A product-preserving functor $\Tocat \rightarrow \Sets$ is essentially a category with object set $\mathcal O$.  In the comparison between simplicial categories and Segal categories with a fixed object set, we use simplicial algebras $\Tocat \rightarrow \SSets$, which correspond to simplicial categories, or categories enriched over simplicial sets, with fixed object set $\mathcal O$.  Here, we regard strictly product-preserving functors $\Tocat \rightarrow \Theta_nSp$ as categories enriched over $\Theta_nSp$ with object set $\mathcal O$.

When $\Theta_nSp$ is additionally a cofibrantly generated model category of simplicial presheaves, then we can consider the model structure $\Algtocat_{\Theta_n}$ and the related model structure for homotopy algebras, $\mathcal L(\Thetansp)^{\Tocat}$.  The homotopy algebras can be regarded as a weaker version of categories enriched over $\Theta_nSp$, yet not as weak as the Segal category objects that we considered in the previous section; our goal is to show they are all equivalent nonetheless.

We first note the easiest such equivalence.

\begin{prop}
The identity functor gives a Quillen equivalence
\[ \xymatrix@1{\mathcal L (\Theta_nSp)^{\Deltaop}_{\mathcal O,f} \ar@<.5ex>[r] & \mathcal L (\Theta_nSp)^{\Deltaop}_{\mathcal O,c}. \ar@<.5ex>[l]} \]
\end{prop}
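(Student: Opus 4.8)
The plan is to exploit the fact that the two localized model structures have the same underlying category and, crucially, the same class of weak equivalences; once that is established, the identity functors form a Quillen equivalence almost formally. The argument follows the template of the $n=1$ case treated in \cite{simpmon}.

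First I would compare the \emph{unlocalized} structures $\Thetansp^{\Deltaop}_{\mathcal O,f}$ and $\Thetansp^{\Deltaop}_{\mathcal O,c}$. Both have as weak equivalences the levelwise weak equivalences of $\Thetansp$, so these two classes coincide. The cofibrations of the $f$-structure are retracts of relative $I_{\mathcal O,f}$-cell complexes and in particular are levelwise cofibrations; by the preceding proposition identifying the Reedy and injective model structures on $\Thetansp^{\Deltaop}$, these levelwise cofibrations are exactly the cofibrations of the $c$-structure. Hence $\cof(\Thetansp^{\Deltaop}_{\mathcal O,f}) \subseteq \cof(\Thetansp^{\Deltaop}_{\mathcal O,c})$, so the identity is a left Quillen functor from the $f$-structure to the $c$-structure. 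Since the two structures share their weak equivalences, the defining criterion for a Quillen equivalence is satisfied trivially—for any map $f$, $\mathrm{id}(f)=f$ is a weak equivalence in one structure if and only if it is in the other—and so the identity is already a Quillen equivalence at this stage.

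Next I would pass to the localizations. Both $\mathcal L(\Thetansp)^{\Deltaop}_{\mathcal O,f}$ and $\mathcal L(\Thetansp)^{\Deltaop}_{\mathcal O,c}$ are obtained by localizing with respect to the \emph{same} set of Segal maps $\{G[p]_\xu \rightarrow \Delta[p]_\xu \mid p \geq 0,\ \xu \in \mathcal O^{p+1}\}$. Left Bousfield localization leaves the cofibrations unchanged, so $\cof(\mathcal L(\Thetansp)^{\Deltaop}_{\mathcal O,f}) = \cof(\Thetansp^{\Deltaop}_{\mathcal O,f}) \subseteq \cof(\Thetansp^{\Deltaop}_{\mathcal O,c}) = \cof(\mathcal L(\Thetansp)^{\Deltaop}_{\mathcal O,c})$. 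It then remains to check that the two classes of localized weak equivalences—the $P$-local equivalences of Definition~\ref{local}—coincide. These are defined through the homotopy function complexes $\Map^h(-,-)$, and the Quillen equivalence of the unlocalized structures yields a natural weak equivalence between the homotopy function complexes computed in the $f$- and $c$-structures. Consequently an object is $P$-local in one structure exactly when it is $P$-local in the other, and likewise a map is a $P$-local equivalence in one exactly when it is in the other; the localized weak equivalences therefore agree.

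With both localized structures sharing the containment of cofibrations and the same weak equivalences, the identity $\mathcal L(\Thetansp)^{\Deltaop}_{\mathcal O,f} \rightarrow \mathcal L(\Thetansp)^{\Deltaop}_{\mathcal O,c}$ preserves cofibrations and, because the weak equivalences coincide, also acyclic cofibrations; it is thus a left Quillen functor, and equality of the weak equivalences again makes it a Quillen equivalence. The main obstacle is precisely the identification of the localized weak equivalences in the previous step: one must verify that the homotopy function complexes agree between the two structures, so that ``local'' and ``local equivalence'' have the same meaning in both. This uses left properness of the unlocalized model categories together with their functorial cofibrant and fibrant replacements, exactly as in the simplicial-set case \cite{simpmon}. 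Once this comparison is in place, the remainder of the argument is entirely formal.
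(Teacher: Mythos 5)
Your proposal is correct and follows essentially the same route as the paper's proof: the identity is a left Quillen functor because the $f$-cofibrations are contained in the $c$-cofibrations (localization not changing cofibrations), and since both structures are localizations of levelwise-equivalence structures at the same set of maps, their weak equivalences coincide, which makes the Quillen pair trivially a Quillen equivalence. The paper simply asserts these two facts, while you supply the supporting details (the Reedy/injective comparison and the invariance of homotopy function complexes), so no discrepancy arises.
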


\begin{proof}
The proof follows since weak equivalences are the same in both model structures and all the cofibrations in $\mathcal L (\Theta_nSp)^{\Deltaop}_{\mathcal O,f}$ are cofibrations in $\mathcal L (\Theta_nSp)^{\Deltaop}_{\mathcal O,c}$.
\end{proof}

The following proof is more difficult to establish, but in fact the argument is identical to case of $\SSets$ \cite[\S 4, \S 5]{simpmon}.

\begin{theorem}
There is a Quillen equivalence of model categories
\[ \xymatrix@1{\mathcal L(\Thetansp)^{\Tocat}_{\mathcal O,f} \ar@<.5ex>[r] & \mathcal L(\Theta_nSp)^{\Deltaop}_{\mathcal O,f}. \ar@<.5ex>[l]} \]
\end{theorem}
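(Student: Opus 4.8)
The plan is to construct an explicit adjoint pair $(F,N)$ between the two model categories and then to verify the two conditions defining a Quillen equivalence, following the argument for $\SSets$ in \cite[\S 4, \S 5]{simpmon} with $\Thetansp$ in place of $\SSets$. The right adjoint is the nerve $N$, which sends a functor $A \colon \Tocat \rightarrow \Thetansp$ to the functor $NA \colon \Deltaop \rightarrow \Thetansp$ given in degree $k$ by
\[ (NA)_k = \coprod_{\xu \in \mathcal O^{k+1}} A\bigl( T_{\{(x_0,x_1), \ldots, (x_{k-1},x_k)\}} \bigr), \]
with $(NA)_0 = \mathcal O$ discrete and with faces and degeneracies induced by the composition, projection and identity-insertion morphisms of $\Tocat$; thus $NA$ is a $\Thetansp$-Segal precategory with object set $\mathcal O$. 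Concretely $N$ is restriction along the functor that sends the $\xu$-decorated $k$-simplex to the object $T_{\{(x_0,x_1), \ldots, (x_{k-1},x_k)\}}$, so for the projective (subscript $f$) structures it preserves objectwise fibrations and objectwise weak equivalences and is therefore right Quillen, with left adjoint $F$ the corresponding left Kan extension. I would first record that this pair descends to the localized structures: using the theory isomorphism $T_{\{(x_0,x_1), \ldots, (x_{k-1},x_k)\}} \cong \prod_i T_{(x_{i-1},x_i)}$, the up-to-homotopy product preservation of a homotopy algebra translates exactly into the Segal condition on $NA$, so $N$ carries the local objects (homotopy algebras) to local objects (Segal category objects); that $F$ sends the Segal localizing maps $G(p)_\xu \rightarrow \Delta[p]_\xu$ to local equivalences is then checked directly, using the explicit generating sets $I_{\mathcal O,f}$ and $J_{\mathcal O,f}$ and their algebra-side analogues.

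To promote this Quillen pair to a Quillen equivalence I would verify the standard criterion that $N$ reflects weak equivalences between fibrant objects and that the derived unit $X \rightarrow N \widehat{FX}$ is a weak equivalence for every cofibrant Segal precategory $X$, where $\widehat{FX}$ denotes a fibrant replacement of $FX$ in $\mathcal L(\Thetansp)^{\Tocat}_{\mathcal O,f}$. For the reflection statement, a map between fibrant objects on either side is a weak equivalence precisely when it is an objectwise weak equivalence of the underlying diagrams; for Segal category objects and for homotopy algebras this is detected on the degree-one fibers over $\mathcal O \times \mathcal O$, which are exactly the mapping objects $A(T_{(\alpha,\beta)})$ that $N$ records, so $N$ both preserves and reflects such weak equivalences. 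For the unit, $FX$ is a bar-type rigidification whose value at the sort $T_{(\alpha,\beta)}$ is a homotopy colimit over all strings of arrows from $\alpha$ to $\beta$ of the corresponding pieces of $X$; the Segal condition collapses this to the degree-one mapping object of $X$ up to weak equivalence, which is what is needed to identify $N \widehat{FX}$ with $X$.

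The main obstacle is this last step, and it is exactly where the rigidification results of the preceding sections are used. The nerve and its adjoint are built from strict products and coproducts of mapping objects, whereas the fibrant objects of $\mathcal L(\Thetansp)^{\Tocat}_{\mathcal O,f}$ only preserve products up to homotopy, so reconciling $FX$ with the homotopy type of $X$ requires the Quillen equivalence between homotopy and strict $\Tocat$-algebras established above, together with a careful analysis of the Segal fibrant replacement and of the coproduct-over-tuples decomposition forced by the fixed object set $\mathcal O$. Once these identifications are in hand every remaining step is formal and runs parallel to \cite[\S 4, \S 5]{simpmon}; the only facts about the target that must be re-verified are that $\Thetansp$ is a cofibrantly generated, cartesian, left proper model category in which every object is cofibrant and in which products of weak equivalences are again weak equivalences, all of which hold.
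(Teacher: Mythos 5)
Your proposal is correct and takes essentially the same approach as the paper: the paper's own proof consists entirely of the remark that the argument is identical to the simplicial case of \cite[\S 4, \S 5]{simpmon}, and your nerve/left-Kan-extension adjunction, its descent to the localized structures, and the appeal to the rigidification theorem to control the derived unit are exactly the ingredients of that cited argument transported to $\Thetansp$. Note only that your claim that the nerve is right Quillen (it sends levelwise fibrations of algebras to levelwise fibrations of precategories, which are coproducts of fibrations over tuples of objects) rests on coproducts of fibrations being fibrations in $\Thetansp$, which is precisely Proposition \ref{fibrations} of the paper.
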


\section{Two model structures for Segal category objects}

We begin by defining sets of maps which will be our generating cofibrations in our two model structures.  However, here we no longer require object sets to remain fixed.

Thus, we begin with the generating cofibrations for the Reedy model structure on $\Theta_nSp^{\Deltaop}_c$, which are given by
\[ A \times \Delta[p] \cup B \times \partial \Delta [p] \rightarrow B \times \Delta[p], \] where $A \rightarrow B$ ranges over all generating cofibrations in $\Theta_nSp$ and $p \geq 0$.  Since the localization does not change the cofibrations, we can use the Reedy generating cofibrations as a generating set for $\Theta_nSp$.  Recall that a map $X \rightarrow Y$ is an acyclic fibration in $\SSets^{\Theta_n}$ if, for any object $[q](c_1, \ldots, c_q)$, the map $X_{(c_1, \ldots, c_q)} \rightarrow P_{(c_1, \ldots,c_q)}$ is an acyclic fibration of simplicial sets, where $P_{(c_1, \ldots, c_q)}$ is the pullback in the diagram
\[ \xymatrix{P_{(c_1, \ldots, c_q)} \ar[r] \ar[d] & Y_{(c_1, \ldots, c_q)} \ar[d] \\
M_{(c_1, \ldots, c_q)} X \ar[r] & M_{(c_1, \ldots, c_q)}Y. } \]  Here $M_{(c_1, \ldots, c_q)}X$ denotes the matching object for $X$ at $[q](c_1, \ldots, c_q)$ and analogously for $Y$ \cite[15.2.5]{hirsch}.

The map $X_{(c_1, \ldots, c_q)} \rightarrow P_{(c_1, \ldots,c_q)}$ is an acyclic fibration of simplicial sets precisely when it has the left lifting property with respect to the generating cofibrations for the standard model structure on $\SSets$, i.e., with respect to the maps $\partial \Delta[m] \rightarrow \Delta[m]$ for all $m \geq 0$.  Now, notice that
\[ X_{(c_1, \ldots, c_q)} = \Map(\Theta[q](c_1, \ldots, c_q), X) \] and
\[ M_{(c_1, \ldots, c_q)}X = \Map(\partial \Theta[q](c_1, \ldots, c_q), X) \] where $\Theta[q](c_1, \ldots, c_q)$ is the analogue of $\Delta[q]$ in $\SSets$, i.e., the representable object for maps into $[q](c_1, \ldots, c_q)$, and $\partial \Theta[q](c_1, \ldots, c_q)$ is the analogue of $\partial \Delta[q]$.
Thus, we get that
\[ P_{(c_1, \ldots, c_p)} = \Map(\Theta[q](c_1, \ldots, c_q),Y) \times_{\Map(\partial \Theta[q](c_1, \ldots, c_q), Y)} \Map(\partial \Theta[q](c_1, \ldots, c_q),X). \]
Putting all this information together, we see that $X \rightarrow Y$ is an acyclic fibration in $\Thetansp$ precisely when it has the right lifting property with respect to all maps
\[ \partial \Delta[m] \times \Theta[q](c_1, \ldots c_q) \cup \Delta[m] \times \partial \Theta[q](c_1, \ldots, c_q) \rightarrow \Delta[m] \times \Theta[q](c_1, \ldots, c_q). \]

Thus, returning to the setting of $\Thetaspdelta_{disc}$, we have a preliminary set of possible generating cofibrations given by

\SMALL

\[ \begin{aligned}
\left( (\partial \Delta[m] \times \Theta[q](c_1, \ldots, c_q) \cup \Delta[m] \times \partial \Theta[q](c_1, \ldots, c_q)) \times \Delta[p] \cup (\Delta[m] \times \Theta[q](c_1, \ldots, c_q)) \times \partial \Delta[p] \right)_r \\
\rightarrow \left((\Delta[m] \times \Theta[q](c_1, \ldots, c_q)) \times \Delta[p] \right)_r.
\end{aligned} \]

\normalsize

As arose in \cite[\S 4]{thesis}, some of these maps are not still monomorphisms after applying the reduction functor.  It suffices to take all maps as above where $m=q=p=0$, and where $m,q \geq 0$ and $p \geq 1$.  All other maps where $p=0$ either result in isomorphisms (which are unnecessary to include) or maps which are not isomorphisms.  For example, when $p=q=0$ and $m=1$, we obtain $\Delta[0] \amalg \Delta[0] \rightarrow \Delta[0]$ after reduction, which is not a monomorphism.  We denote by $I_c$ the set of remaining maps, which will be a set of generating cofibrations for one of our model structures.

However, this reduction process does not work as well when we seek to find generating cofibrations for a model structure analogous to the projective model structure on $\Thetaspdelta$, in which the generating cofibrations are of the form
\[ A \times \Delta[p] \rightarrow B \times \Delta[p] \] where $p\geq 0$ and $A \rightarrow B$ is a generating cofibration in $\Theta_nSp$.  For some of the maps $A \rightarrow B$ (in particular when, using the description of such maps above, $m=1$ or $q=1$), reduction does not give the correct map.

Thus, we also need to consider another set, first to prove a technical lemma for our first model structure, and then to be a set of generating cofibrations for the second model structure.  For any object $A$ in $\Theta_nSp$ and $p\geq 0$, define the object $A_{[p]}$ to be the pushout of the diagram
\[ \xymatrix{A \times (\Delta[p])_0 \ar[r] \ar[d] & A \times \Delta[p] \ar[d] \\
(\Delta[p])_0 \ar[r] & A_{[p]}. } \] Define the set
\[ I_f= \{A_{[p]} \rightarrow B_{[p]} \mid p \geq 0, A \rightarrow B \text{ a generating cofibration in } \Theta_nSp \}. \]

Let $X$ be a $\Thetansp$-Segal precategory, and consider the map $X \rightarrow \cosk_0X$.  Denote by $X_p(v_0, \ldots, v_p)$ the fiber of the map
\[ X_p \rightarrow (\cosk_0X)_p = X_0^{p+1}. \]  Then, for any object $A$ or $B$ as given above (noting that these objects are small in $\Theta_nSp$), we get
\[ \begin{aligned}
\Hom(A_{[p]},X) & =\Hom(A \times \Delta[p] \amalg_{A \times \Delta[p]_0} \Delta[p]_0, X) \\
&= \Hom(A, X_p) \times_{\Hom(A, X_0^{p+1})} X_0^{p+1} \\
& = \coprod_{v_0, \ldots, v_p} \Hom(A, X_p(v_0, \ldots, v_p)).
\end{aligned} \]
(Notice that by our assumption that $X$ is a $\Thetansp$-Segal precategory, $X_0$ is a discrete object of $\Theta_nSp$ and therefore our abuse of terminology that it has ``elements" $v_0, \ldots v_p$ makes sense.)

We make use of the following facts about fibrations in $\Thetansp$. We give the proof in Section \ref{fibrationproof}.

\begin{prop} \label{fibrations}
Let $X, X', Y$, and $Y'$ be objects of $\Thetansp$.
\begin{enumerate}
\item If $X$ and $Y$ are both discrete, then any map $X \rightarrow Y$ is a fibration.

\item If $X \rightarrow Y$ and $X \rightarrow Y$ be fibrations, then $X \amalg X' \rightarrow Y \amalg Y'$ is a fibration.
\end{enumerate}
\end{prop}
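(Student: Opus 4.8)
The plan is to reduce both parts to a single fact about connected components. Write $\underline T$ for the discrete object attached to a set $T$, i.e.\ the constant functor $\Thetanop \to \SSets$ with value $T$; note that $\underline T = \coprod_T \Theta[0]$ is a coproduct of copies of the terminal object. Since $\SSets^{\Thetanop}$ is a presheaf topos it is extensive, so every object $B$ splits canonically as a coproduct $\coprod_c B_c$ of connected objects; write $\pi_0 B$ for the resulting set of connected components, and recall that any map $B \to \coprod_S Z$ sends each $B_c$ into a single summand. The crux I would establish is the lemma: \emph{every weak equivalence $g \colon A \to B$ in $\Thetansp$ induces a bijection $\pi_0 A \to \pi_0 B$.}

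For part (1), I would first check that each $\underline T$ is local (a $\Thetansp$-space): the Segal, completeness, and intertwined lower-level conditions packaged in $\mathcal T_n$ all hold because $\underline T$ is a higher nerve of a discrete category, so the relevant maps are isomorphisms. Since every object is cofibrant and $\underline T$ is fibrant, $B \mapsto \Map^h(B,\underline T) = \Map(B,\underline T)$ is a homotopy functor, and a direct computation identifies $\Map^h(B,\underline T)$ with the \emph{discrete} simplicial set $\Hom(\pi_0 B, T)$. Hence a weak equivalence $g$ gives a bijection $\Hom(\pi_0 B, T) \to \Hom(\pi_0 A, T)$ for every $T$, and letting $T$ vary yields the crux lemma. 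Now, given a map $f \colon \underline S \to \underline T$ of discrete objects and a lifting problem against an acyclic cofibration $g \colon A \to B$, I pass to connected components: the maps into $\underline S$ and $\underline T$ factor through $\Hom(\pi_0(-),S)$ and $\Hom(\pi_0(-),T)$, and since $\pi_0 g$ is a bijection the lift is forced and unique, obtained by precomposing the top map with $(\pi_0 g)^{-1}$. This shows $f$ is a fibration.

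For part (2), I would use extensivity directly. Let $f \colon X \to Y$ and $f' \colon X' \to Y'$ be fibrations and consider a lifting problem for $f \amalg f'$ against an acyclic cofibration $g \colon C \to D$. The map $C \to X \amalg X'$ decomposes $C = C_0 \amalg C_1$ according to which summand each component lands in, and similarly $D = D_0 \amalg D_1$ via $D \to Y \amalg Y'$. Since $f \amalg f'$ respects summands and the square commutes, $g$ carries $C_0$ into $D_0$ and $C_1$ into $D_1$, so $g = g_0 \amalg g_1$ with $g_i \colon C_i \to D_i$ a monomorphism. Because $\Map^h(-,W)$ sends coproducts to products, a coproduct of maps is a weak equivalence exactly when each factor is, so each $g_i$ is an acyclic cofibration. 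Lifting $g_0$ against $f$ and $g_1$ against $f'$ and taking the coproduct of the two lifts solves the original problem.

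The main obstacle is the crux lemma behind part (1): establishing it cleanly requires the fibrancy of the constant objects $\underline T$ and the identification of $\Map^h(B,\underline T)$ with the discrete set $\Hom(\pi_0 B, T)$, so the real work is verifying that $\underline T$ satisfies all of the conditions encoded in $\mathcal T_n$ and that the mapping space collapses to a set. A secondary subtlety, in part (2), is the claim that a coproduct of maps is a weak equivalence iff each summand is; this uses that coproducts are homotopy coproducts (all objects being cofibrant) and a little care with empty summands, which I would dispatch by noting that an empty $C_i$ forces the matched $D_i$ to be weakly trivial.
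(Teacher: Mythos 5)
Your part (1) is correct but follows a genuinely different route from the paper. The paper never mentions $\pi_0$ or locality of discrete objects: it proves both parts by checking the right lifting property against the explicit generating acyclic cofibrations of $\Thetansp$, by induction on $n$ (the base case $n=1$ handled in $\css$, and the inductive step reducing the third family of generating acyclic cofibrations to $\Theta_{n-1}Sp$ via the decomposition $\Map(V[1](B),X)=\coprod_{x_0,x_1}\Map(B,M_X(x_0,x_1))$). Your route --- discrete objects are $\mathcal T_n$-local with $\Map^h(B,T)\cong\Hom(\pi_0 B,T)$ discrete, hence weak equivalences induce bijections on $\pi_0$, hence lifts into discrete objects are forced and unique --- is a legitimate alternative whose payoff is a reusable homotopy-invariance statement for $\pi_0$; its cost is the verification you yourself flag as the real work (Reedy fibrancy of constant diagrams plus locality with respect to all three families in $\mathcal T_n$), which is plausible and provable but is precisely where the content lives.

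Part (2), however, has a genuine gap. The pivotal claim ``a coproduct of maps is a weak equivalence exactly when each factor is'' is not justified in the direction you need, because weak equivalences in $\Thetansp$ are $\mathcal T_n$-local equivalences, detected by $\Map^h(-,W)$ for local $W$, and a product of two maps of simplicial sets can be a weak equivalence without either factor being one when empty spaces occur. Concretely, if $W$ is local and $\Map^h(C_1,W)=\varnothing$, then also $\Map^h(D_1,W)=\varnothing$ (there is a map from the latter to the former), so the product map is $\varnothing\rightarrow\varnothing$ --- a weak equivalence --- no matter how badly $\Map^h(D_0,W)\rightarrow\Map^h(C_0,W)$ fails to be one; for such $W$, the hypothesis that $g_0\amalg g_1$ is a local equivalence carries no information about $g_0$. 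Your remark about ``empty summands'' addresses empty $C_i$, not empty mapping spaces, which can occur with all four objects $C_0,C_1,D_0,D_1$ nonempty. The gap can be repaired in $\Thetansp$ --- e.g.\ by showing that every nonempty local object admits a global point $\Theta[0]\rightarrow W$ (using the Segal condition, every nonempty local $W$ has $W[0]_0\neq\varnothing$), so that mapping spaces out of nonempty objects into nonempty local objects are never empty --- but that is an additional argument you do not supply. The paper's treatment of part (2) sidesteps all of this: it checks the lifting property only against the generating acyclic cofibrations, whose domains and codomains are connected, so any lifting square against $X\amalg X'\rightarrow Y\amalg Y'$ factors entirely through a single summand, where the assumed fibration provides the lift. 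That argument avoids decomposing an arbitrary acyclic cofibration, and hence avoids the problematic claim entirely.
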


The following lemma is the higher analogue of \cite[4.1]{thesis}.

\begin{lemma} \label{mapping}
Suppose that a map $f \colon X \rightarrow Y$ of Segal precategory objects has the right lifting property with respect to the maps in $I_f$.  Then the map $X_0 \rightarrow Y_0$ is surjective, and each map
\[ X_p(v_0, \ldots, v_p) \rightarrow Y_p(fv_0, \ldots fv_p) \] is an acyclic fibration in $\Theta_nSp$ for each $p \geq 1$ and $(v_0, \ldots, v_p) \in X_0^{p+1}$.
\end{lemma}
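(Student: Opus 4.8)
The plan is to exploit the adjunction computation for $\Hom(A_{[p]}, X)$ carried out just before the statement, which identifies the right lifting property against $I_f$ with a collection of fiberwise lifting conditions, and then to invoke Proposition \ref{fibrations} to reduce those conditions to honest acyclic fibration statements in $\Theta_nSp$.

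\medskip

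The display computed above shows that for any generating cofibration $A \to B$ in $\Theta_nSp$ and any $p \geq 0$,
\[
\Hom(A_{[p]}, X) = \coprod_{v_0, \ldots, v_p} \Hom(A, X_p(v_0, \ldots, v_p)),
\]
and the analogous formula holds for $B$ and for $Y$. Thus a lift in the square
\[
\xymatrix{A_{[p]} \ar[r] \ar[d] & X \ar[d]^f \\ B_{[p]} \ar[r] \ar@{-->}[ur] & Y}
\]
is, after decomposing along the coproduct, exactly a lift in the square
\[
\xymatrix{A \ar[r] \ar[d] & X_p(v_0, \ldots, v_p) \ar[d] \\ B \ar[r] \ar@{-->}[ur] & Y_p(fv_0, \ldots, fv_p)}
\]
for each choice of $(v_0, \ldots, v_p)$. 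The first step, then, is to translate the hypothesis that $f$ has the right lifting property with respect to all of $I_f$ into the statement that, for every $p \geq 0$ and every tuple $(v_0, \ldots, v_p)$, the fiber map $X_p(v_0, \ldots, v_p) \to Y_p(fv_0, \ldots, fv_p)$ has the right lifting property against every generating cofibration $A \to B$ of $\Theta_nSp$, i.e.\ is an acyclic fibration in $\Theta_nSp$. One has to take care that the coproduct decomposition genuinely lets the lifting problem split componentwise, which uses that the source $A$ is connected in the appropriate sense so that a map out of $A$ lands in a single summand; this is where the discreteness of $X_0$ (guaranteeing that the index set is an honest set of ``elements'') is invoked.

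\medskip

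The second step handles the exceptional case $p=0$, which is where the surjectivity conclusion comes from. When $p=0$ the object $A_{[0]}$ is built from the discrete object $X_0 = \mathcal O$, and the relevant fibers $X_0(v_0)$ and $Y_0(fv_0)$ are discrete objects of $\Theta_nSp$. By Proposition \ref{fibrations}(1), every map between discrete objects is automatically a fibration, so the lifting condition against $A \to B$ for $p=0$ reduces to a statement about surjectivity of $X_0 \to Y_0$: solving the lifting problem for the generating cofibration $\varnothing \to \Delta[0]$ (the $m=q=0$ piece) forces each point of $Y_0$ to be hit, giving surjectivity. This is precisely the role that part (1) of the fibration proposition plays, and part (2) is what lets us reassemble the separate fiberwise acyclic fibrations over distinct tuples into a single map on the full coproduct without losing the fibration property.

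\medskip

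The main obstacle I expect is the bookkeeping of the coproduct decomposition and, in particular, verifying that having the right lifting property against the \emph{reduced/fixed-object} maps $A_{[p]} \to B_{[p]}$ really is equivalent to the componentwise condition rather than merely implying it in one direction. One must check that a family of chosen lifts, one per tuple $(v_0, \ldots, v_p)$, assembles into a well-defined map $B_{[p]} \to X$ compatible with the pushout that defines $A_{[p]}$ and $B_{[p]}$ — i.e.\ that the lifts agree on the common ``spine'' $(\Delta[p])_0$ coming from $\mathcal O$. This compatibility is exactly the content that makes the pushout description of $A_{[p]}$ essential, and it parallels the argument in \cite[4.1]{thesis}; the only genuinely new input over the simplicial case is the use of Proposition \ref{fibrations} to supply the fibrancy facts that are automatic for simplicial sets but require proof in $\Theta_nSp$. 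Once these pieces are in place the conclusion for $p \geq 1$ (acyclic fibration on each fiber) and for $p = 0$ (surjectivity) follows directly.
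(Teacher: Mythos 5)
Your proposal follows essentially the same route as the paper's own proof: surjectivity of $X_0 \rightarrow Y_0$ is extracted from lifting against $\varnothing_{[0]} \rightarrow \Delta[0]_{[0]}$ (the $m=q=0$ case), and the coproduct decomposition $\Hom(B_{[p]},X) \cong \coprod_{v_0, \ldots, v_p} \Hom(B, X_p(v_0, \ldots, v_p))$ is used to split each lifting problem into its components, exactly as in the paper. The only differences are cosmetic: the paper's proof of this particular lemma never actually invokes Proposition \ref{fibrations}, and your worry about the converse direction (reassembling componentwise lifts into a global one) is moot here, since the lemma only requires the implication from the global right lifting property to the componentwise conclusion, which is the easy direction you already carry out.
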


\begin{proof}
Using our description of the generating cofibrations of $\Thetansp$, when $m=q=0$, we get the map $\varnothing \rightarrow \Delta[0]$. [Where did I define these???] The fact that $X \rightarrow Y$ has the right lifting property with respect to $\varnothing_{[0]} \rightarrow \Delta[0]_{[0]}$ implies that $X_0 \rightarrow Y_0$ is surjective.

To prove the remaining part of the statement, we need to show that a dotted arrow lift exists in all diagrams of the form
\[ \xymatrix{A \ar[r] \ar[d] & X_p(v_0, \ldots, v_p) \ar[d] \\
B \ar[r] \ar@{-->}[ur] & Y_p(fv_0, \ldots, fv_p) } \] for all choices of $p \geq 1$ and $A \rightarrow B$.  By our hypothesis, we have the existence of dotted arrow lifts
\[ \xymatrix{A_{[p]} \ar[r] \ar[d] & X \ar[d] \\
B_{[p]} \ar[r] \ar@{-->}[ur] & Y. } \]  The existence of such a lift is equivalent to the surjectivity of the map $\Hom(B_{[p]},X) \rightarrow P$, where $P$ is the pullback in the diagram
\[ \xymatrix{\Hom(B_{[p]},X) \ar[r] & P \ar[r] \ar[d] & \Hom(A_{[p]},X) \ar[d] \\
& \Hom(B_{[p]},Y) \ar[r] & \Hom(A_{[p]},Y). } \]

But, as we just showed above, we get
\[ \Hom(B_{[p]},X)= \coprod_{v_0, \ldots, v_p} \Hom(B, X_p(v_0, \ldots, v_p)), \] and analogously for the other objects in the diagram.  Looking at each component for each $(v_0, \ldots v_p)$ separately, we can check that surjectivity of this map does indeed give us the lift that we require.
\end{proof}

\begin{lemma} \label{mappingic}
Suppose that $f \colon X \rightarrow Y$ is a map in $(\Thetansp)^{\Deltaop}_{disc}$ with the right lifting property with respect to the maps in $I_c$.  Then
\begin{enumerate}
\item the map $f_0 \colon X_0 \rightarrow Y_0$ is surjective, and

\item for every $m \geq 1$ and $(v_0, \ldots, v_m) \in X_0^{n+1}$, the map
\[ X_m(v_0, \ldots, v_m) \rightarrow Y_m(fv_0, \ldots, fv_m) \] is a weak equivalence in $\Thetansp$.
\end{enumerate}
\end{lemma}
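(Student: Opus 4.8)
The plan is to run the strategy of Lemma~\ref{mapping}, now through the reduction functor $(-)_r$, which is left adjoint to the inclusion $(\Thetansp)^{\Deltaop}_{disc}\hookrightarrow\Thetaspdelta$. The maps of $I_c$ are exactly the reductions of the Reedy generating cofibrations, i.e. the pushout-products of the generating cofibrations $K\to L$ of $\Thetansp$ (where $K=\partial\Delta[m]\times\Theta[q](c_1,\ldots,c_q)\cup\Delta[m]\times\partial\Theta[q](c_1,\ldots,c_q)$ and $L=\Delta[m]\times\Theta[q](c_1,\ldots,c_q)$) with the boundary inclusions $\partial\Delta[p]\to\Delta[p]$, restricted to the families $m=q=p=0$ and $m,q\geq0,\ p\geq1$. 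By adjunction, $f$ has the right lifting property against $(g)_r$ in $(\Thetansp)^{\Deltaop}_{disc}$ if and only if $f$, viewed in $\Thetaspdelta$, has it against $g$. The member with $m=q=p=0$ is the reduction of $\varnothing\to\Delta[0]$, and exactly as in Lemma~\ref{mapping} the lifting property against it forces $f_0\colon X_0\to Y_0$ to be surjective, proving (1).

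For (2) I would first record the analogue of the computation preceding Lemma~\ref{mapping}. The reduced objects admit the same coproduct description, so $\Hom((L\times\Delta[p])_r,X)=\coprod_{(v_0,\ldots,v_p)}\Hom(L,X_p(v_0,\ldots,v_p))$, and the identical argument applied to the boundary gives
\[ \Hom((L\times\partial\Delta[p])_r,X)=\coprod_{(v_0,\ldots,v_p)}\Hom\bigl(L,(M_{[p]}X)(v_0,\ldots,v_p)\bigr), \]
where $M_{[p]}X=\Hom(\partial\Delta[p],X)$ is the matching object of $X$ in the $\Deltaop$-direction and $(M_{[p]}X)(v_0,\ldots,v_p)$ is its fiber over $(v_0,\ldots,v_p)\in X_0^{p+1}$. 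Since $\Hom$ sends the pushout defining the source of an $I_c$-map to the corresponding pullback, the right lifting property of $f$ against the $p\geq1$ members of $I_c$ decomposes over vertex tuples and, by the usual pushout-product adjunction, is equivalent to the assertion that for every $p\geq1$ and every $(v_0,\ldots,v_p)\in X_0^{p+1}$ the relative matching map
\[ X_p(v_0,\ldots,v_p)\longrightarrow Y_p(fv_0,\ldots,fv_p)\times_{(M_{[p]}Y)(fv_0,\ldots,fv_p)}(M_{[p]}X)(v_0,\ldots,v_p) \]
is an acyclic fibration in $\Thetansp$.

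From here I would show each $X_m(v_0,\ldots,v_m)\to Y_m(fv_0,\ldots,fv_m)$ is a weak equivalence by induction on $m$. When $m=1$ the boundary $\partial\Delta[1]$ is a pair of points, so $(M_{[1]}X)(v_0,v_1)$ is terminal and the relative matching map is just $X_1(v_0,v_1)\to Y_1(fv_0,fv_1)$, an acyclic fibration. For the inductive step one identifies $(M_{[m]}X)(v_0,\ldots,v_m)$ with the limit, over the proper nonempty subsets $S=\{i_0<\cdots<i_k\}$ of $\{0,\ldots,m\}$, of the fibers $X_k(v_{i_0},\ldots,v_{i_k})$, in which the singletons contribute only points. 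Using the inductive hypothesis on the lower fibers together with the cartesian structure and right properness of $\Thetansp$ and the fibration statements of Proposition~\ref{fibrations}, one shows the induced map $(M_{[m]}X)(v_0,\ldots,v_m)\to(M_{[m]}Y)(fv_0,\ldots,fv_m)$ is a weak equivalence. Pulling this back along the matching map $Y_m(fv_0,\ldots,fv_m)\to(M_{[m]}Y)(fv_0,\ldots,fv_m)$ and composing with the relative matching map above, the two-out-of-three property yields that $X_m(v_0,\ldots,v_m)\to Y_m(fv_0,\ldots,fv_m)$ is a weak equivalence.

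The main obstacle is the inductive step, specifically the comparison of matching objects on fibers. Because $f_0$ is merely surjective rather than a weak equivalence, the map $X\to Y$ is not a Reedy acyclic fibration, so one cannot simply invoke that the matching-object functor preserves acyclic fibrations; it is precisely the passage to fibers over a fixed tuple $(v_0,\ldots,v_m)$, where the level-zero data degenerate to points, that rescues the argument and is responsible for the conclusion being a weak equivalence rather than an acyclic fibration. Turning the limit defining $(M_{[m]}X)(v_0,\ldots,v_m)$ into a homotopy limit, so that levelwise weak equivalences on the lower fibers induce a weak equivalence, is where one genuinely needs the matching maps to be fibrations, supplied by Proposition~\ref{fibrations} together with right properness of $\Thetansp$.
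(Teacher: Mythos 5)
Your first two steps are sound, and they genuinely depart from the paper's proof. The paper disposes of the lemma in one line: the right lifting property against $I_c$ gives the right lifting property against \emph{all} cofibrations (monomorphisms) of $(\Thetansp)^{\Deltaop}_{disc}$, hence in particular against the maps of $I_f$, which are monomorphisms, and then Lemma \ref{mapping} applies verbatim. You instead analyze the $I_c$-lifting problems directly: your adjunction between reduction and inclusion, your proof of (1), and your identification of the right lifting property against the $p \geq 1$ members of $I_c$ with the statement that each fiberwise relative matching map
\[ X_p(v_0,\ldots,v_p)\longrightarrow Y_p(fv_0,\ldots,fv_p)\times_{(M_{[p]}Y)(fv_0,\ldots,fv_p)}(M_{[p]}X)(v_0,\ldots,v_p) \]
is an acyclic fibration in $\Thetansp$ are all correct.

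The gap is in your inductive step, where you convert this information into weak equivalences. You need (a) the matching maps such as $Y_m(fv_0,\ldots,fv_m)\to(M_{[m]}Y)(fv_0,\ldots,fv_m)$ to be fibrations, and (b) right properness of $\Thetansp$, plus homotopy-invariance of the limit computing $(M_{[m]}X)(v_0,\ldots,v_m)$. Neither is available. For (a): $X$ and $Y$ are arbitrary Segal precategory objects, with no fibrancy hypothesis, so their matching maps need not be fibrations; Proposition \ref{fibrations} cannot supply this, since it only says that maps between \emph{discrete} objects, and coproducts of fibrations, are fibrations, while the fibers $X_k$ and $Y_k$ over vertex tuples are not discrete. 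For (b): $\Thetansp$ is a left Bousfield localization of the injective structure; right properness is not established in this paper or its references and typically fails for such localizations, and in any case weak equivalences between non-local objects are local equivalences, which are not preserved by the (homotopy) limits you form unless the objects involved are local --- which the fibers of $X$ and $Y$ need not be. The repair is to never downgrade from acyclic fibrations to weak equivalences: in any model category acyclic fibrations are stable under pullback along arbitrary maps, under products, and under composition, so the standard Reedy-style induction over the skeletal filtration of $\partial\Delta[m]$, carried out fiberwise over vertex tuples (where the level-zero data are points), shows that $(M_{[m]}X)(v_0,\ldots,v_m)\to(M_{[m]}Y)(fv_0,\ldots,fv_m)$ is an acyclic fibration; its pullback to $Y_m(fv_0,\ldots,fv_m)$ is then an acyclic fibration with no properness input, and composing with your relative matching map gives that $X_m(v_0,\ldots,v_m)\to Y_m(fv_0,\ldots,fv_m)$ is an acyclic fibration, hence a weak equivalence. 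This repaired argument recovers (indeed strengthens to) the conclusion of Lemma \ref{mapping}, which is what the paper's shortcut through $I_f$ obtains immediately.
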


\begin{proof}
Since $f$ has the right lifting property with respect to the maps in the set $I_c$, it has the right lifting property with respect to all cofibrations.  In particular, $f$ has the right lifting property with respect to the maps in the set $I_f$.  Therefore, the result follows by Lemma \ref{mapping}.
\end{proof}

In order to give a precise definition of our weak equivalences, we need to define a ``localization" functor $L$ on the category $\Theta_nSp^{\Deltaop}_{disc}$ such that, for any object $X$, $LX$ is a Segal space object which is also a Segal category object weakly equivalent to $X$ in $\mathcal L_S\Theta_nSp^{\Deltaop}$.

To begin, we consider one choice of generating acyclic cofibrations in $\mathcal L_S\Theta_nSp^{\Deltaop}$, namely, the set
\[ \{C \times \Delta[p] \cup D \times G(p) \rightarrow D \times \Delta[p] \} \] where $p \geq 0$ and $C \rightarrow D$ is a generating acyclic cofibration in $\Theta_nSp$.  Using these maps, we can use the small object argument to construct a localization functor.

However, the maps with $p=0$ are problematic because taking pushouts along them, as given by the small object argument, results in objects which are no longer Segal category objects.  Thus, we consider maps as above, but with the restriction that $p \geq 1$.  To show that the ``localization" functor that results from this smaller set of maps is sufficient, in that it still gives us a Segal space object, we can use an argument just like the one given in \cite[\S 5]{thesis}.

Now, we make the following definitions in $\Theta_nSp^{\Deltaop}_{disc}$.
\begin{itemize}
\item Weak equivalences are the maps $f \colon X \rightarrow Y$ such that the induced map $LX \rightarrow LY$ is a Dwyer-Kan equivalence of Segal space objects.  (We call such maps \emph{Dwyer-Kan equivalences}.)

\item Cofibrations are the monomorphisms.

\item Fibrations are the maps with the right lifting property with respect to the maps which are both cofibrations and weak equivalences.
\end{itemize}

\begin{lemma} \label{rlpic}
Suppose that $f \colon X \rightarrow Y$ is a map in $(\Thetansp)^{\Deltaop}_{disc}$ with the right lifting property with respect to the maps in $I_c$.  Then $f$ is a Dwyer-Kan equivalence.
\end{lemma}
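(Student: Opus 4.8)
The plan is to deduce the two conditions defining a Dwyer-Kan equivalence of Segal space objects for the localized map $Lf \colon LX \to LY$ directly from Lemma \ref{mappingic}. That lemma tells us that the hypothesis---that $f$ has the right lifting property with respect to $I_c$---forces $f_0 \colon X_0 \to Y_0$ to be surjective and each fiber map $X_m(v_0,\ldots,v_m) \to Y_m(fv_0,\ldots,fv_m)$ to be a weak equivalence in $\Thetansp$ for every $m \geq 1$. These are exactly the two ingredients we need, once we transport them across the localization functor $L$.

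First I would establish the mapping space condition. Taking $m=1$ in Lemma \ref{mappingic} gives that $X_1(x,y) \to Y_1(fx,fy)$ is a weak equivalence for each pair $(x,y) \in X_0 \times X_0$; these fibers are precisely the mapping spaces of the Segal precategory objects $X$ and $Y$ over the given pairs of objects. Since the natural map $X \to LX$ is a weak equivalence into a Segal space object of $\mathcal L_S (\Thetansp)^{\Deltaop}$, the homotopy-invariant mapping space $\map_{LX}(x,y)$ is computed from these fibers, and $L$ is compatible with the formation of fibers over the discrete object $X_0$. Consequently the induced map $\map_{LX}(x,y) \to \map_{LY}(fx,fy)$ is again a weak equivalence, which is condition (1) in the definition of a Dwyer-Kan equivalence. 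This is the step that follows \cite[\S 5]{thesis}, with $\SSets$ replaced by $\Thetansp$ throughout.

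Next I would handle the homotopy category condition. Because $L$ does not alter the (discrete) object set, the objects of $\Ho(LX)$ are the elements of $X_0$ and likewise for $LY$, so the surjectivity of $f_0$ immediately gives that $\Ho(LX) \to \Ho(LY)$ is surjective on objects. On the other hand, $\Hom_{\Ho(LX)}(x,y) = \pi_0 \map_{LX}(x,y)$, so the mapping space equivalences established above show that $\Ho(LX) \to \Ho(LY)$ is fully faithful. A functor that is fully faithful and surjective on objects is an equivalence of categories, giving condition (2). Hence $Lf$ is a Dwyer-Kan equivalence of Segal space objects, which by definition means $f$ is a Dwyer-Kan equivalence.

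The main obstacle is the first step: passing from the unlocalized fibers $X_m(v_0,\ldots,v_m)$ to the homotopy-invariant mapping spaces $\map_{LX}(x,y)$. This requires knowing that the localization functor $L$---constructed via the small object argument from the restricted ($p \geq 1$) set of generating acyclic cofibrations---is compatible with taking fibers over the discrete degree-zero object and preserves the relevant weak equivalences. I expect this to be formal given the already-established properties of $L$ together with the fact that $X_0$ is discrete, so that $X_p = \coprod_{v_0,\ldots,v_p} X_p(v_0,\ldots,v_p)$ splits the fibers off as a coproduct; the only real work is checking that the argument of \cite[\S 5]{thesis} goes through verbatim with the enriching category $\SSets$ replaced by $\Thetansp$, which is purely formal. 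Note finally that we genuinely need the Dwyer-Kan framework rather than a direct levelwise argument, since $f_0$ is only surjective and need not be injective, so $f$ itself need not be a levelwise weak equivalence.
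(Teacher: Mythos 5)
Your reduction to Lemma \ref{mappingic} matches the paper's starting point, and your treatment of the homotopy-category condition (surjective on objects plus fully faithful implies equivalence of categories) is fine as far as it goes; but the step you defer as ``formal'' is precisely where the content of the lemma lies, and as you state it, it is false. The localization $L$ is \emph{not} compatible with taking fibers over the discrete degree-zero object: $L$ adjoins composites, so $\map_{LX}(x,y)$ depends on the whole diagram $X$, not on the fiber $X_1(x,y)$ alone. For instance, for the precategory $G(2) = \Delta[1] \amalg_{\Delta[0]} \Delta[1]$ (two composable arrows with no composite, regarded as a levelwise discrete object) one has $G(2)_1(0,2) = \varnothing$, while $LG(2)$ is Dwyer-Kan equivalent to the nerve of $[2]$, so $\map_{LG(2)}(0,2) \simeq \ast$. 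Hence your first step, which uses only the $m=1$ fibers together with the claimed compatibility, does not go through. Even if you strengthen it to use the fiber equivalences for all $m$, the implication ``weak equivalence on all fibers plus surjectivity on objects implies Dwyer-Kan equivalence after localization'' is not formal: since $f_0$ need not be injective, $f$ is not a levelwise weak equivalence, so you cannot appeal to $L$ preserving levelwise equivalences, and nothing you have written replaces that appeal.

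The paper's proof supplies exactly the missing argument. It factors $f$ as $X \rightarrow \Phi Y \rightarrow Y$, where $\Phi Y$ is the pullback of $Y \rightarrow \cosk_0(Y_0) \leftarrow \cosk_0(X_0)$. Lemma \ref{mappingic} makes $X \rightarrow \Phi Y$ a levelwise (Reedy) weak equivalence, hence a Dwyer-Kan equivalence. The crux is then the map $\Phi Y \rightarrow Y$, which has isomorphic fibers and is surjective on objects --- exactly the situation your proposal treats as automatic --- and the paper proves it is a Dwyer-Kan equivalence by induction on the skeleta of $Y$: writing $\sk_p Y$ as a pushout of $\sk_{p-1}Y$ along maps $A_{[m]} \rightarrow B_{[m]}$, reducing to the statement that $\Phi\Delta[p] \rightarrow \Delta[p]$ is a Dwyer-Kan equivalence, and invoking the cartesian property of the $\Thetansp$-Segal space model structure together with the corresponding result of \cite[\S 9]{thesis}. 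Your proposal has no counterpart to this induction, so the gap is genuine: to repair it you would need to prove the $\Phi Y \rightarrow Y$ step (or an equivalent statement), not cite a compatibility of $L$ with fibers.
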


\begin{proof}
Suppose that $f \colon X \rightarrow Y$ has the right lifting property with respect to the maps in $I_c$.  By Lemma \ref{mappingic}, $f_0 \colon X_0 \rightarrow Y_0$ is surjective and each map
\[ X_m(v_0, \ldots, v_m) \rightarrow Y_m(fv_0, \ldots, fv_m) \] is a weak equivalence in $\Thetansp$ for $m \geq 1$ and $(v_0, \ldots, v_m) \in X_0^{m+1}$.  To prove that $f$ is a Dwyer-Kan equivalence, it remains to show that, for any $x,y \in X_0$, $\map_{LX}(x,y) \rightarrow \map_{LY}(fx,fy)$ is a weak equivalence in $\Thetansp$.

First, we construct a factorization of $f$ as follows.  Define $\Phi Y$ to be the pullback in the diagram
\[ \xymatrix{\Phi Y \ar[r] \ar[d] & Y \ar[d] \\
\cosk_0(X_0) \ar[r] & \cosk_0(Y_0). } \]
Then $(\Phi Y)_0 = X_0$ and, for every $m \geq 1$ and $(v_0, \ldots, v_m) \in X_0^{m+1}$, there is an isomorphisms of mapping objects
\[ (\Phi Y)_0(v_0, \ldots, v_m) \cong Y_m(fv_0, \ldots, fv_m). \]
Then $X \rightarrow \Phi Y$ is a Reedy weak equivalence and hence a Dwyer-Kan equivalence.  Therefore, it remains to prove that $\Phi Y \rightarrow Y$ is a Dwyer-Kan equivalence, via an inductive argument on the skeleta of $Y$.

For any $p \geq 0$, consider the map $\Phi (\sk_p Y) \rightarrow \sk_p Y$.  If $p=0$, then $\Phi (\sk_0 Y)$ and $\sk_0 Y$ are actually $\Thetansp$-Segal objects which can be observed to be Dwyer-Kan equivalent.  Therefore, assume that the map $\Phi (\sk_{p-1} Y) \rightarrow \sk_{p-1} Y$ is a Dwyer-Kan equivalence and consider the map $\Phi (\sk_p Y) \rightarrow \sk_p Y$.

We know that $\sk_p Y$ is obtained from $\sk_{p-1} Y$ via iterations of pushouts along maps $A_{[m]} \rightarrow B_{[m]}$ for $A \rightarrow B$ a generating cofibration in $\Thetansp$.  Since we need a more precise formulation, we recall that generating cofibrations in $\Thetansp$ are of the form
\[ \partial \Delta[m] \times \Theta[q](c_1, \ldots c_q) \cup \Delta[m] \times \partial \Theta[q](c_1, \ldots, c_q) \rightarrow \Delta[m] \times \Theta[q](c_1, \ldots, c_q) \] for $m, q \geq 0$ and $c_1, \ldots, c_q$ objects of $\Theta_{n-1}$.  So, we have the pushout diagram
\[ \xymatrix{\Delta[m] \times \Theta[q](c_1, \ldots, c_q) \times \Delta[p]_0 \ar[d] \ar[r] & \Delta[m] \times \Theta[q](c_1, \ldots, c_q) \times \Delta[p] \ar[d] \\
\Delta[p]_0 \ar[r] & (\Delta[m] \times \Theta[q](c_1, \ldots, c_q))_{[p]}.} \]  Similarly, we obtain $(\partial \Delta[m] \times \Theta[q](c_1, \ldots c_q) \cup \Delta[m] \times \partial \Theta[q](c_1, \ldots, c_q))_{[p]}$.

For simplicity, assume that we require only one pushout to obtain $\sk_p Y$ from $\sk_{p-1} Y$; here we further simplify by considering the case where $m=q=0$, although the argument can be extended more generally.  For this case, we have the pushout diagram
\[ \xymatrix{\varnothing \ar[r] \ar[d] & \sk_{p-1} Y \ar[d] \\
\Delta[p] \ar[r] & \sk_p Y.} \]
Since we know by our inductive hypothesis that $\Phi(\sk_{p-1} Y) \rightarrow \sk_{p-1} Y$ is a Dwyer-Kan equivalence, it suffices to establish that $\Phi \Delta[p] \rightarrow \Delta[p]$ is a Dwyer-Kan equivalence.  In the setting where these are levelwise discrete simplicial spaces, this fact was established in \cite[\S 9]{thesis}.  The argument given there continues to hold in the present case, making use of the fact that the model structure for $\Thetansp$-Segal spaces is cartesian.
\end{proof}

\begin{theorem} \label{cversion}
There is a cofibrantly generated model category structure $\mathcal L \Theta_nSp^{\Deltaop}_{disc,c}$ on the category of $\Thetansp$-Segal precategories with the above weak equivalences, fibrations, and cofibrations.
\end{theorem}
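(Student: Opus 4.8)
The plan is to apply the standard recognition theorem for cofibrantly generated model categories \cite[11.3.1]{hirsch}, taking $I_c$ as the set of generating cofibrations, the Dwyer--Kan equivalences as the class $W$ of weak equivalences, and constructing a suitable set $J_c$ of generating acyclic cofibrations. Before invoking the theorem I would dispatch the routine hypotheses. First, $\Theta_nSp^{\Deltaop}_{disc}$ is complete and cocomplete: limits are computed levelwise and colimits are computed by applying the reduction functor $(-)_r$ to the levelwise colimit, exactly as in \cite[3.5, 3.6]{simpmon}. Second, $W$ satisfies the two-out-of-three property and is closed under retracts, which follows since $L$ preserves these constructions and Dwyer--Kan equivalences of $\Thetansp$-Segal spaces inherit both properties from the corresponding statements about mapping objects and homotopy categories. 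Third, the domains of the maps in $I_c$ (and $J_c$) are small, the objects of $\Thetansp$ being small simplicial presheaves, so that both sets permit the small object argument.

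Next I would define $J_c$ as the union of two families: the reduced, vertex-indexed analogues of the Reedy generating acyclic cofibrations built from generating acyclic cofibrations $C \to D$ of $\Thetansp$, together with the Segal-imposing localization maps $\{ (C \times \Delta[p]_\xu \cup D \times G(p)_\xu)_r \to (D \times \Delta[p]_\xu)_r \}$, in both families restricted to $p \geq 1$ so that reduction does not destroy the monomorphism property, just as we restricted $I_c$. I would then verify condition (4) of the recognition theorem, namely that every relative $J_c$-cell complex lies in $W \cap I_c\text{-cof}$: the $I_c$-cofibration part holds because the maps of $J_c$ are monomorphisms and monomorphisms are precisely the $I_c$-cofibrations, while the weak-equivalence part holds because these are acyclic cofibrations in the localized Segal structure $\mathcal L_S\Thetansp^{\Deltaop}$ and hence are sent by $L$ to Dwyer--Kan equivalences.

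The crux is the identification of the maps with the right lifting property with respect to $I_c$. Lemmas \ref{mappingic} and \ref{rlpic} already supply one half: such a map is surjective on objects, induces weak equivalences $X_p(v_0,\dots,v_p) \to Y_p(fv_0,\dots,fv_p)$ in $\Thetansp$, and is therefore a Dwyer--Kan equivalence; together with the observation that it also lies in $J_c\text{-inj}$ (since the maps of $J_c$ are monomorphisms, hence $I_c$-cofibrations), this gives the condition $I_c\text{-inj} \subseteq W \cap J_c\text{-inj}$. Feeding these facts, along with the functorial factorizations produced by the small object argument on $I_c$ and $J_c$, into \cite[11.3.1]{hirsch} yields a cofibrantly generated model structure in which the cofibrations are exactly the $I_c$-cofibrations, that is, the monomorphisms.

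The main obstacle is the final compatibility condition, that every monomorphism which is a Dwyer--Kan equivalence belongs to the saturation $J_c\text{-cof}$ (equivalently, that a $J_c$-injective weak equivalence already has the right lifting property with respect to $I_c$). This is precisely where the localization functor $L$ and the cartesianness of the $\Thetansp$-Segal space model structure must be brought to bear: one factors such a map through $L$, exploits the mapping-object identity $\Hom(A_{[p]}, X) = \coprod_{v_0,\dots,v_p} \Hom(A, X_p(v_0,\dots,v_p))$ together with the Dwyer--Kan analysis of $\Phi Y \to Y$ and the skeletal induction from the proof of Lemma \ref{rlpic}, and reduces to the acyclicity statement already established for levelwise discrete simplicial spaces. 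I expect this to demand the most care, but since every ingredient --- the cartesian localized structure, the functor $L$, and the skeletal induction --- is available here exactly as in the simplicial case, the argument of \cite[\S 5]{thesis} carries over with only notational changes.
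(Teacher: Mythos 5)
Your proposal takes a genuinely different route from the paper, and the route has a fatal gap. The paper does not use the recognition theorem \cite[11.3.1]{hirsch} at all: it invokes Beke's form of Jeff Smith's theorem \cite[4.1]{beke}, whose hypotheses are (1) the two-out-of-three/retract (and accessibility) conditions on $W$, (2) $\inj(I_c) \subseteq W$, which is exactly Lemma \ref{rlpic}, and (3) closure of $W \cap \cof(I_c)$ under pushouts and transfinite compositions, proved via the pushout diagrams of mapping objects and homotopy categories. The whole point of that theorem is that a set of generating acyclic cofibrations is produced abstractly from accessibility and is never exhibited. Your plan instead requires an explicit set $J_c$ as input, and that requirement cannot be met by the set you propose. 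Every map in your $J_c$ (reduced Reedy-type acyclic cofibrations and reduced Segal maps, $p \geq 1$) induces an isomorphism on the discrete object at level zero; this property is inherited by pushouts of Segal precategory objects, transfinite compositions, and retracts, so every map in $\cof(J_c)$ is an isomorphism on objects. But $W \cap \cof(I_c)$ contains Dwyer--Kan equivalences that are essentially surjective without being bijective on objects: for instance $\Delta[0] \rightarrow E^t$, the inclusion of a vertex into the levelwise discrete precategory given by the nerve $E$ of the category with two objects and a single isomorphism between them. This map is a monomorphism and a Dwyer--Kan equivalence (it is fully faithful with terminal mapping objects, and essentially surjective), hence lies in $W \cap \cof(I_c)$, yet it is not in $\cof(J_c)$; note it is not even an $\mathcal L_S$-local equivalence, since completeness is exactly what the Segal space object structure lacks. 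So the first alternative of condition (4) of \cite[11.3.1]{hirsch} fails. The second alternative, $W \cap \inj(J_c) \subseteq \inj(I_c)$, fares no better: if it held together with your other conditions, the recognition theorem would output a model structure whose acyclic cofibrations are precisely $\cof(J_c)$, contradicting the same example.

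The step you flag as ``the crux'' is therefore not a hard verification awaiting the techniques of \cite[\S 5]{thesis}; it is a false statement for this $J_c$. The cited section of \cite{thesis} constructs the localization functor $L$ from these maps and shows $LX$ is a Segal category; it does not (and could not) show that monomorphism Dwyer--Kan equivalences are generated by object-preserving maps, and indeed no explicit set of generating acyclic cofibrations is known even in the $n=1$ case --- this is precisely the known obstruction that forces the use of Smith/Beke-type machinery here. Parts of your write-up do overlap with the paper's verification (smallness, two-out-of-three and retracts for $W$, and $\inj(I_c) \subseteq W$ via Lemmas \ref{mappingic} and \ref{rlpic}), but the condition your argument never addresses --- closure of $W \cap \cof(I_c)$ under pushouts and transfinite compositions --- is the one the paper actually has to prove once the correct existence theorem is chosen.
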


\begin{proof}
We use \cite[4.1]{beke} to establish this model structure.  It is not too hard to show that condition (1) is satisfied with $W$ the class of weak equivalences as defined.  However, to prove the remaining two statements we need the set
\[ I_c=\{(A \times \Delta[p] \cup B \times \partial \Delta[p])_r \rightarrow (B \times \Delta[p])_r \} \] where $A \rightarrow B$ are the generating cofibrations in $\Theta_nSp$.

Condition (2) was established in Lemma \ref{rlpic}.

For condition (3), first notice that elements of $\cof(I_c)$ are monomorphisms.  Now suppose that $X \rightarrow Y$ is a weak equivalence which is in $\cof(I_c)$, and suppose
\[ \xymatrix{X \ar[r] \ar[d] & Z \ar[d] \\
Y \ar[r] & W} \] is a pushout diagram.  Then notice that in the diagram
\[ \xymatrix{\map_{LX}(x,y) \ar[r] \ar[d] & \map_{LZ}(x,y) \ar[d] \\
\map_{LY}(x,y) \ar[r] & \map_{LW}(x,y)} \] again has the left-hand vertical map a cofibration and weak equivalence in $\Thetansp$, and is again a pushout diagram.  Furthermore, using the definition of homotopy category in a $\Thetansp$-Segal category, it can be shown that the analogous diagram of homotopy categories is again a pushout diagram.  Therefore, weak equivalences which are in $\cof(I_c)$ are preserved by pushouts.  A similar argument using mapping objects and homotopy categories establishes that such maps are preserved by transfinite compositions.
\end{proof}

We now define another model structure with the same weak equivalences, but for which the cofibrations are given by transfinite compositions of pushouts along the maps of the generating set $I_f$, and the fibrations are then determined.

\begin{theorem}
There is a model structure $\mathcal L (\Theta_nSp)^{\Deltaop}_{disc, f}$ on the category of Segal precategory objects with weak equivalences the Dwyer-Kan equivalences and the cofibrations given by iterated pushouts along the maps of the set $I_f$.
\end{theorem}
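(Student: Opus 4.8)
The plan is to obtain this second model structure by the same application of Smith's recognition theorem \cite[4.1]{beke} that produced Theorem \ref{cversion}, but now feeding in the generating set $I_f$ in place of $I_c$. The underlying category $(\Thetansp)^{\Deltaop}_{disc}$ and the class $W$ of weak equivalences (the Dwyer--Kan equivalences) are unchanged, so everything in the earlier proof concerning $W$ alone---that it satisfies the two-out-of-three property, is closed under retracts, and is accessible---carries over verbatim and need not be redone. What remains is to verify, for the set $I_f$, the two hypotheses of \cite[4.1]{beke} that do reference the generating set: that every map in $\inj(I_f)$ lies in $W$, and that $\cof(I_f) \cap W$ is closed under pushouts and transfinite composition.

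First I would treat the condition $\inj(I_f) \subseteq W$. Given a map $f \colon X \to Y$ of Segal precategory objects with the right lifting property with respect to $I_f$, Lemma \ref{mapping} applies to exactly this hypothesis and yields that $f_0 \colon X_0 \to Y_0$ is surjective and that each $X_p(v_0,\ldots,v_p) \to Y_p(fv_0,\ldots,fv_p)$ is an acyclic fibration in $\Thetansp$ for $p \geq 1$. These are precisely the conclusions on which the argument of Lemma \ref{rlpic} is built, so the construction of the factorization through the pullback $\Phi Y$, the reduction to proving that $\Phi Y \to Y$ is a Dwyer--Kan equivalence, and the skeletal induction all proceed unchanged, since none of those steps uses anything about $I_c$ beyond the output of Lemma \ref{mapping}. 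This gives that $f$ is a Dwyer--Kan equivalence.

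Next I would check the closure condition. The point to observe is that each map in $I_f$ is a monomorphism, since $A_{[p]} \to B_{[p]}$ is obtained by the indicated pushout construction from a generating cofibration $A \to B$ of $\Thetansp$, and that construction preserves monomorphisms. Hence every map in $\cof(I_f)$ is a monomorphism, exactly as for $\cof(I_c)$, and the argument in the proof of Theorem \ref{cversion}---tracking pushouts and transfinite compositions through the mapping objects $\map_{L(-)}(x,y)$ and the associated homotopy categories---applies without modification to show that $\cof(I_f) \cap W$ is closed under these operations. With all the hypotheses of \cite[4.1]{beke} in hand, one obtains a cofibrantly generated model structure whose weak equivalences are the Dwyer--Kan equivalences, whose cofibrations are the retracts of transfinite compositions of pushouts of maps in $I_f$, and whose fibrations are determined by right lifting against the acyclic cofibrations.

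The main obstacle, and the only genuinely new input beyond the bookkeeping of Theorem \ref{cversion}, is the verification that $\inj(I_f) \subseteq W$, and here one must be slightly careful. Because $I_f$ generates a strictly smaller class of cofibrations than $I_c$, its class of injectives is correspondingly larger, so one cannot simply cite Lemma \ref{rlpic} or Lemma \ref{mappingic} (whose hypotheses are stated for $I_c$). The resolution is exactly the observation made above: Lemma \ref{mapping} is stated directly for the right lifting property against $I_f$, and the remainder of the argument of Lemma \ref{rlpic} depends only on its conclusion, so the implication goes through once that dependency is made explicit.
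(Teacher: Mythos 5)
Your proposal is correct and follows essentially the same route as the paper: the paper likewise verifies the hypotheses of \cite[4.1]{beke}, noting that the condition on the class of Dwyer--Kan equivalences carries over from Theorem \ref{cversion}, that the injectivity condition for $I_f$ follows from Lemma \ref{mapping} together with an argument analogous to Lemma \ref{rlpic}, and that the closure condition for $\cof(I_f) \cap W$ works as in the other model structure. Your added observation---that one cannot cite Lemma \ref{rlpic} directly since it is stated for $I_c$, but that Lemma \ref{mapping} is stated precisely for lifting against $I_f$ and the rest of the argument depends only on its conclusions---is exactly the dependency the paper's proof implicitly relies on.
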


\begin{proof}
As before, we show that the conditions of \cite[4.1]{beke} are satisfied.  Condition (1) continues to hold from the previous model structure.  A similar proof can be used to establish condition (2), using Lemma \ref{mapping} and a proof analogous to the one for Lemma \ref{rlpic}.  Condition (3) works as in the other model structure.
\end{proof}

\section{Quillen equivalences between Segal category objects and enriched categories}

We now establish Quillen equivalences between the models given in the previous sections.

\begin{prop}
The identity functor induces a Quillen equivalence
\[ \xymatrix@1{\mathcal L (\Theta_nSp)^{\Deltaop}_{disc,c} \ar@<.5ex>[r] & \mathcal L (\Theta_nSp)^{\Deltaop}_{disc,f}. \ar@<.5ex>[l]} \]
\end{prop}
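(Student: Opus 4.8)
The plan is to recognize that the two model structures $\mathcal L (\Thetansp)^{\Deltaop}_{disc,c}$ and $\mathcal L (\Thetansp)^{\Deltaop}_{disc,f}$ are supported on the same underlying category of $\Thetansp$-Segal precategories and have exactly the same weak equivalences, namely the Dwyer-Kan equivalences. Consequently the comparison will be the simplest possible kind of Quillen equivalence, induced by the identity functor, precisely as in the analogous statement comparing the fixed-object structures $\mathcal L (\Thetansp)^{\Deltaop}_{\mathcal O,f}$ and $\mathcal L (\Thetansp)^{\Deltaop}_{\mathcal O,c}$. First I would note that the identity functor is adjoint to itself, so that it supplies an adjoint pair in either direction between the two structures.

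The only genuine verification is the nesting of the two classes of cofibrations. By Theorem \ref{cversion} the cofibrations of $\mathcal L (\Thetansp)^{\Deltaop}_{disc,c}$ are exactly the monomorphisms, while the cofibrations of $\mathcal L (\Thetansp)^{\Deltaop}_{disc,f}$ are the maps in $\cof(I_f)$. I would check that each generating map $A_{[p]} \rightarrow B_{[p]}$ of $I_f$ is a monomorphism: it is induced by a map from the pushout square defining $A_{[p]}$ to the one defining $B_{[p]}$, all of whose component maps are monomorphisms because $A \rightarrow B$ is; since the category of functors $\Deltaop \rightarrow \Thetansp$ is a presheaf topos, in which monomorphisms are stable under pushout, the induced map $A_{[p]} \rightarrow B_{[p]}$ is again a monomorphism. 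As monomorphisms are moreover closed under pushout and transfinite composition, every map of $\cof(I_f)$ is a monomorphism. Hence the identity functor $\mathcal L (\Thetansp)^{\Deltaop}_{disc,f} \rightarrow \mathcal L (\Thetansp)^{\Deltaop}_{disc,c}$ carries cofibrations to cofibrations, and, because the two structures share the same weak equivalences, it carries acyclic cofibrations to acyclic cofibrations as well. Thus it is a left Quillen functor, its right adjoint being the identity in the opposite direction.

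It then remains to upgrade this Quillen pair to a Quillen equivalence, and here the coincidence of weak equivalences makes the criterion automatic. For cofibrant $X$ and fibrant $Y$, the map $X \rightarrow Y$ and its adjunct under the identity adjunction are literally the same map of Segal precategory objects; since a map is a Dwyer-Kan equivalence in one structure if and only if it is one in the other, the defining condition for a Quillen equivalence holds trivially. The one place requiring any care is the monomorphism check for the generators in $I_f$, which is routine in a presheaf category and proceeds exactly as in the $n=1$ case of \cite{thesis}; I do not anticipate any substantive obstacle beyond this.
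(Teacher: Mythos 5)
Your proposal follows the same route as the paper: the paper's own proof consists exactly of the two observations you make, namely that the identity functor from $\mathcal L(\Thetansp)^{\Deltaop}_{disc,f}$ to $\mathcal L(\Thetansp)^{\Deltaop}_{disc,c}$ preserves cofibrations and acyclic cofibrations (giving a Quillen pair), and that the two structures have the same weak equivalences (making the Quillen-equivalence criterion automatic). The one step you elaborate, the inclusion $\cof(I_f) \subseteq \{\text{monomorphisms}\}$, is the right thing to check and its conclusion is true, but the justification you give is not a valid general principle: pushout-stability of monomorphisms says that the pushout of a mono along an arbitrary map is a mono, not that a map of pushout squares all of whose components are monos induces a mono on pushouts. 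The latter is false even in $\Sets$: compare the spans $\{0,1\} \leftarrow \varnothing \rightarrow \{\ast\}$ and $\{0,1\} \leftarrow \{0,1\} \rightarrow \{\ast\}$, where every component of the comparison is a mono but the induced map on pushouts is $\{0,1\} \amalg \{\ast\} \rightarrow \{\ast\}$. In your situation the claim holds for a more specific reason: the vertex inclusion $(\Delta[p])_0 \rightarrow \Delta[p]$ is a levelwise injection, so in each simplicial degree $k$ and at each object of $\Theta_n$ one computes $A_{[p]}$ to be $\bigl(A \times (\Delta[p]_k \setminus \Delta[p]_0)\bigr) \amalg \Delta[p]_0$, and the map to $B_{[p]}$ is $(f \times \mathrm{id}) \amalg \mathrm{id}$, which is manifestly a monomorphism when $f$ is; alternatively one may invoke adhesivity of presheaf toposes, but then one must also verify that the comparison squares are pullbacks, which is precisely the hypothesis failing in the counterexample above. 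With that step repaired, the rest of your argument coincides with the paper's proof.
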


\begin{proof}
The identity map from $\mathcal L \Theta_nSp^{\Deltaop}_{f, disc}$ to $\mathcal L \Theta_nSp^{\Deltaop}_{c, disc}$ preserves cofibrations and acyclic cofibrations, so we get a Quillen pair.  The fact that it is a Quillen equivalence follows then from the fact that weak equivalences are the same in both categories.
\end{proof}

\begin{prop} \label{qpair}
There is a Quillen pair
\[ \xymatrix@1{F \colon \mathcal L (\Theta_nSp)^{\Deltaop}_{f, disc} \ar@<.5ex>[r] & \Theta_nSp-Cat \colon R. \ar@<.5ex>[l]} \]
\end{prop}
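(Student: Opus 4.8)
The plan is to take $R$ to be the nerve functor and $F$ its left adjoint, and then to verify the two defining conditions of a Quillen pair, reducing each to the generating cofibrations $I_f$ and to the fixed-object-set situation of Section 5. Explicitly, for a category $\mathcal C$ enriched in $\Theta_nSp$, let $R\mathcal C$ be the Segal precategory object with $(R\mathcal C)_0 = \ob \mathcal C$, regarded as a discrete object of $\Theta_nSp$, and
\[ (R\mathcal C)_p = \coprod_{(x_0, \ldots, x_p)} \Hom_{\mathcal C}(x_0, x_1) \times \cdots \times \Hom_{\mathcal C}(x_{p-1}, x_p), \]
with faces and degeneracies induced by the composition and the units of $\mathcal C$. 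By construction $R\mathcal C$ is discrete in degree zero and satisfies the Segal condition strictly, so $R$ indeed lands in $\mathcal L (\Theta_nSp)^{\Deltaop}_{f, disc}$. As both categories are locally presentable, $R$ admits a left adjoint $F$; on the pieces building the generating cofibrations the essential computation is $F(A_{[1]}) = UA$, where $U$ is the functor of Theorem \ref{vcat}.

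I would first check that $F$ preserves cofibrations, which by adjunction is equivalent to $R$ preserving acyclic fibrations. Since $F$ preserves colimits, it is enough to see that $F$ carries each map in $I_f$ to a cofibration of $\Theta_nSp-Cat$: the map $A_{[1]} \to B_{[1]}$ goes to $UA \to UB$, a generating cofibration of type (I1); the degree-zero generator adding a point goes to the object-adjoining cofibration (I2); and for $p \geq 2$ the object $A_{[p]}$ is a colimit of copies of these $p = 1$ building blocks, so $F$ sends $A_{[p]} \to B_{[p]}$ to an iterated pushout of maps of type (I1). Equivalently, and as a cross-check using the results already established, Lemma \ref{mapping} shows that a map has the right lifting property with respect to $I_f$ precisely when it is surjective in degree zero and an acyclic fibration on each mapping object; under the nerve this is exactly the condition that the corresponding map of enriched categories have the right lifting property with respect to (I1) and (I2), so $R$ preserves acyclic fibrations.

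The remaining and hardest condition is that $F$ preserves acyclic cofibrations, equivalently that $R$ preserves fibrations; this is the main obstacle, because fibrations in $\Theta_nSp-Cat$ arise only from the abstract small-object argument of Theorem \ref{vcat} and have no explicit description. Here I would follow the strategy of the $n = 1$ case \cite[\S 7]{thesis}, \cite[\S 4, \S 5]{simpmon} and reduce to the fixed-object-set setting. The Quillen equivalences of Section 5 identify the fixed-object category $\mathcal L (\Theta_nSp)^{\Deltaop}_{\mathcal O,f}$ with $\Algtocat_{\Theta_n}$, the $\Theta_nSp$-categories with object set $\mathcal O$, and under this identification $R$ restricts to the right adjoint of an established Quillen equivalence, hence is right Quillen on each fixed object set. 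Given an acyclic cofibration $f \colon X \to Y$, its image $Ff$ is a cofibration by the previous step, so it remains to verify that $Ff$ is a weak equivalence in $\Theta_nSp-Cat$: its effect on mapping objects, condition (W1), is controlled by the fixed-object equivalences, while its effect on homotopy categories, condition (W2), follows from the Dwyer-Kan equivalence hypothesis on $f$ together with the localization functor $L$ used to define the weak equivalences of $\mathcal L (\Theta_nSp)^{\Deltaop}_{f, disc}$. As in \cite{thesis}, the delicate point is the bookkeeping across varying object sets and the verification that (W2) is preserved; once this is settled, the two conditions together establish that $(F, R)$ is a Quillen pair.
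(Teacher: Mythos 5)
Your proposal is correct and follows essentially the same route as the paper: define $R$ as the (strictly local) enriched nerve with left adjoint $F$, check that $F$ carries the generators $I_f$ to the generating cofibrations of $\Theta_nSp$-$Cat$ (the degree-zero generator to (I2), the $A_{[1]} \rightarrow B_{[1]}$ generators to $UA \rightarrow UB$ of type (I1), and higher $p$ to colimits of these), and then handle acyclic cofibrations by invoking the fixed-object-set Quillen equivalence as in the $n=1$ argument of \cite[8.3]{thesis}. The only cosmetic differences are your appeal to local presentability for the existence of $F$ (the paper instead cites the strict-localization lemma generalizing \cite[5.6]{multisort}) and your added cross-check via Lemma \ref{mapping}, neither of which changes the structure of the argument.
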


To prove this proposition, we make use of the following definition.

\begin{definition}
Let $\mathcal D$ be a small category, $\mathcal C$ a simplicial category, and $\mathcal C^\mathcal D$ the category of
functors $\mathcal D \rightarrow \mathcal C$.  Let $S$ be a set of
morphisms in $\SSetsd$.  An object $Y$ of $\mathcal C^\mathcal D$ is
\emph{strictly} $S$-\emph{local} if for every morphism $f \colon A
\rightarrow B$ in $S$, the induced map on function complexes
\[ f^*: \Map (B,Y) \rightarrow \Map (A,Y) \]
is an isomorphism of simplicial sets. A map $g:C \rightarrow D$ in
$\mathcal C^\mathcal D$ is a \emph{strict} $S$-\emph{local equivalence} if for
every strictly $S$-local object $Y$ in $\mathcal C^\mathcal D$, the induced map
\[ g^*:\Map(D,Y) \rightarrow \Map(C,Y) \]
is an isomorphism of simplicial sets.
\end{definition}

Here, we consider functors $\Deltaop \rightarrow \Theta_nSp$ which are discrete at level zero.  Notice that a category enriched in $\Theta_nSp-Cat$ can be regarded as a strictly local object in this category when we localize with respect to the map $\varphi$ described in an earlier section.  Recall that a Segal category object is a (nonstrictly) local object when regarded as a Segal space object $\Deltaop \rightarrow \Theta_nSp$.  Thus, the enriched nerve functor can be regarded as an inclusion map
\[ R \colon \Theta_nSp-Cat \rightarrow \Theta_nSp^{\Deltaop}. \]

Although we are working in the subcategory of functors which are discrete at level zero, we can still
use the following lemma to obtain a left adjoint functor $F$ to
our inclusion map $R$, since the construction will always produce
a diagram with discrete set at level zero when applied to such a diagram.

\begin{lemma}
For any small category $\mathcal D$ and any model category $\mathcal M$, consider the category of all diagrams $X: \mathcal
D \rightarrow \mathcal M$ and the category of strictly local diagrams
with respect to the set of maps $S= \{f:A \rightarrow B\}$. The
forgetful functor from the category of strictly local diagrams to
the category of all diagrams has a left adjoint.
\end{lemma}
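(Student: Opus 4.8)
The plan is to recognize the strictly local diagrams as the objects of an orthogonality class with respect to a \emph{set} of maps, and then to produce the reflection either by a general reflectivity theorem or by an explicit transfinite construction. Throughout I use that $\mathcal M = \Thetansp$ (or, more generally, any model category whose underlying category is locally presentable), so that $\mathcal M^{\mathcal D}$ is tensored over $\SSets$ and locally presentable.

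First I would replace the simplicial condition defining strict locality by an ordinary, unenriched one. For $B$ in $\SSetsd$, regarded as a levelwise discrete object of $\mathcal M^{\mathcal D}$, and any diagram $Y \colon \mathcal D \to \mathcal M$, the function complex satisfies $\Map(B,Y)_n = \Hom(B \times \Delta[n], Y)$, where $B \times \Delta[n]$ denotes the diagram $d \mapsto B(d) \times \Delta[n]$. A natural map of simplicial sets is an isomorphism precisely when it is a bijection in each degree, so $Y$ is strictly $S$-local if and only if, for every $f \colon A \to B$ in $S$ and every $n \geq 0$, the map
\[ \Hom(B \times \Delta[n], Y) \to \Hom(A \times \Delta[n], Y) \]
is a bijection; that is, $Y$ is orthogonal, in the ordinary sense, to the set
\[ S' = \{\, A \times \Delta[n] \to B \times \Delta[n] \mid (f \colon A \to B) \in S,\ n \geq 0 \,\} \]
of maps in $\mathcal M^{\mathcal D}$. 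Since $S$ is a set and $n$ ranges over the natural numbers, $S'$ is again a set, and the strictly local diagrams are exactly the objects of the orthogonal subcategory $(S')^{\perp}$.

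Next I would invoke the reflectivity of orthogonal subcategories. The category $\mathcal M^{\mathcal D}$ is locally presentable, since $\mathcal D$ is small and the underlying category of $\Thetansp$ is a presheaf category, hence locally presentable, and a functor category from a small category into a locally presentable category is again locally presentable. For a set of morphisms in a locally presentable category the orthogonal subcategory is reflective; applying this to $S'$ shows that the inclusion $(S')^{\perp} \hookrightarrow \mathcal M^{\mathcal D}$ admits a left adjoint $F$, which is precisely the claimed left adjoint to the forgetful functor.

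If one prefers the hands-on construction — which is what is needed to see, as in the surrounding discussion, that $F$ sends diagrams discrete at level zero to diagrams discrete at level zero — I would build $FX$ as a transfinite composite. At each successor stage one simultaneously (i) pushes out along the maps of $S'$ over all squares whose domain maps into the current stage, forcing every $A \times \Delta[n] \to X_\alpha$ to extend over $B \times \Delta[n]$, and (ii) coequalizes each pair of maps out of a codomain that agree on the corresponding domain, forcing that extension to be unique; the unit $X \to FX$ is the structure map of the colimit. The main obstacle is exactly this uniqueness half: unlike a homotopical localization, strict locality demands genuine bijections of hom-sets, so one must coequalize and then check that the transfinite process stabilizes at a strictly local object, and this convergence is where local presentability (the smallness of the domains and codomains in $S'$) is essential. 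Finally, because the maps of $S$ arising in our applications are isomorphisms at level zero, the pushouts and coequalizers above leave the level-zero object unchanged, so $F$ restricts to the subcategory of diagrams discrete at level zero, as required.
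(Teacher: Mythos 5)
Your proof is correct, but it takes a genuinely different route from the paper's. The paper's entire proof of this lemma is a citation: it was proved in \cite[5.6]{multisort} in the case $\mathcal M = \SSets$, together with the assertion that the same argument goes through for a more general simplicial category such as $\Thetansp$. You instead give a self-contained argument: you unwind the enriched condition ``$\Map(B,Y) \rightarrow \Map(A,Y)$ is an isomorphism of simplicial sets'' into ordinary (unenriched) orthogonality of $Y$ against the set $S' = \{A \times \Delta[n] \rightarrow B \times \Delta[n]\}$, using $\Map(X,Y)_n = \Hom(X \times \Delta[n],Y)$, and then invoke the reflectivity of orthogonality classes determined by a \emph{set} of morphisms in a locally presentable category, with the transfinite pushout-and-coequalizer construction as a hands-on substitute. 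What your approach buys: it makes explicit the hypothesis that actually powers the result --- local presentability of the underlying category of $\mathcal M$, which holds for $\Thetansp$ since it is a category of simplicial presheaves --- and it isolates correctly the point that distinguishes strict localization from homotopical localization (uniqueness of extensions forces coequalizing, not just pushing out), which is also exactly what one needs to verify the paper's surrounding claim that $F$ preserves discreteness at level zero. What it costs: as literally stated the lemma asserts the conclusion for an \emph{arbitrary} model category $\mathcal M$, and your argument does not deliver that generality; but this is a defect of the statement rather than of your proof, since the paper's one-line citation (to a proof written for $\SSets$-valued diagrams) does not support that generality either, and the application only ever needs $\mathcal M = \Thetansp$. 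One trivial slip: a simplicial set is regarded in $\Thetansp$ as a \emph{constant} $\Theta_n$-diagram, not a levelwise discrete one; this does not affect your argument.
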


\begin{proof}
This lemma was proved in \cite[5.6]{multisort} in the case where $\mathcal M=\SSets$, but the proof continues to hold if we use a more general simplicial category.
\end{proof}

We define $F \colon \mathcal L(\Thetansp)^{\Deltaop}_{disc, f} \rightarrow \Theta_nSp-Cat$ to be
this left adjoint to the inclusion map of strictly local diagrams.

\begin{proof}[Proof of Proposition \ref{qpair}]
To prove this proposition, we modify the approach given in the proof of the analogous result when $n=1$ \cite[8.3]{thesis}.  We first show that $F$ preserves cofibrations.  Since $F$ is a left adjoint functor, we know that it preserves colimits, so it suffices to show that $F$ takes the maps in the set $I_f$ to cofibrations in $\Theta_nSp-Cat$.

Let $\ast$ denote the terminal object in $(\Thetansp)^{\Deltaop}$.  Since cofibrations are inclusions in $\Theta_nSp$, the map $\varnothing \rightarrow \ast$ is a cofibration, and $\varnothing_{[0]} \rightarrow \ast_{[0]}$ is already local; in fact it corresponds to the generating cofibration $\varnothing \rightarrow \{x\}$ in $\Theta_nSp-Cat$.


For any generating cofibration $A \rightarrow B$, localizing the map $A_{[1]} \rightarrow B_{[1]}$ results in the generating cofibration $UA \rightarrow UB$ of $\Theta_nSp-Cat$.  Localizing any other map of $I_f$ results in a map in $\Theta_nSp-Cat$ which is a colimit of maps of this form, and therefore $F$ preserves cofibrations.

To show that $F$ preserve acyclic cofibrations, we use the Quillen equivalence in the fixed-object set situation; the argument given in \cite[8.3]{thesis} still holds in this more general setting.
\end{proof}

To prove that this Quillen pair is a Quillen equivalence, we use the following theorem, which is the analogue of \cite[8.5]{thesis}.

\begin{lemma} \label{cofibrant}
For every cofibrant object $X$ in $\mathcal L (\Theta_nSp)^{\Deltaop}_{disc,f}$, the map $X \rightarrow FX$ is a Dwyer-Kan equivalence.
\end{lemma}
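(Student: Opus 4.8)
The plan is to follow the strategy of the proof of \cite[8.5]{thesis}, reducing the general statement to the fixed-object-set comparison established in Section 5. First I would observe that $X$ and $FX$ have the same set of objects, namely $\mathcal O = X_0$, and that the unit map $X \to FX$ is the identity on this set. Consequently the induced functor $\Ho(LX) \to \Ho(FX)$ is surjective on objects, so that the essential-surjectivity clause in the definition of Dwyer-Kan equivalence is automatic. Moreover, since $FX$ is an enriched category, its nerve $RFX$ is already a $\Thetansp$-Segal category object, so that $L(FX) \simeq FX$ and its mapping object at $(x,y)$ is precisely $\Hom_{FX}(x,y)$. It therefore suffices to show that for each pair $x,y \in \mathcal O$ the induced map $\map_{LX}(x,y) \to \Hom_{FX}(x,y)$ is a weak equivalence in $\Thetansp$; fully faithfulness of $\Ho(LX) \to \Ho(FX)$ then follows by passing to $\pi_0$.

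The second step is to reduce to the fixed-object situation. The cofibrant object $X$ is built from $\varnothing$ by a transfinite composition of pushouts along maps of $I_f$. These maps are of two kinds: the object-creating cell $\varnothing_{[0]} \to \Delta[0]_{[0]}$, which merely adjoins an isolated object, and the morphism-creating cells $A_{[p]} \to B_{[p]}$ with $p \geq 1$, whose attaching maps factor through a choice of vertices $\xu$ lying in $X_0$, and hence realize each such pushout as a pushout along the fixed-object generating cofibration $A_{[p],\xu} \to B_{[p],\xu}$. Reorganizing the cellular filtration so that the object cells are attached first (which is possible since they merely adjoin isolated objects and commute with the attachment of the morphism cells), the remaining cells exhibit $X$ as a cell complex built from the discrete precategory on $\mathcal O$ using only fixed-object generating cofibrations. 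I would conclude that, relative to its object set $\mathcal O$, the object $X$ is cofibrant in $\mathcal L(\Theta_nSp)^{\Deltaop}_{\mathcal O,f}$.

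The third step applies the fixed-object comparison. On the subcategory of diagrams with object set $\mathcal O$, the functor $F$ coincides with the rigidification functor that is the left adjoint in the Quillen equivalences of Section 5 relating $\mathcal L(\Theta_nSp)^{\Deltaop}_{\mathcal O,f}$, $\mathcal L(\Thetansp)^{\Tocat}_{\mathcal O,f}$, and $\Algtocat_{\Theta_n}$. Since the composite is a Quillen equivalence and $X$ is cofibrant there, the derived unit is a weak equivalence; unwinding this on mapping objects yields exactly $\map_{LX}(x,y) \simeq \Hom_{FX}(x,y)$, which by the first step completes the proof that $X \to FX$ is a Dwyer-Kan equivalence.

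The main obstacle I anticipate is the second step: carefully justifying that a cofibrant object in the variable-object-set structure, once its object set is recognized as $\mathcal O$, can be regarded as cofibrant in the fixed-object structure. This requires disentangling the object-adjoining cells from the morphism-adjoining cells in the transfinite composition and checking that the attaching maps of the latter always land in the already-constructed object set, so that they are genuine fixed-object cells. A secondary technical point is the passage from the derived unit, which a priori involves a fibrant replacement of $FX$, to the actual unit $X \to FX$; this uses that $FX$ is already a Segal category object, so that the localization $L$ and fibrant replacement leave its mapping objects unchanged up to weak equivalence.
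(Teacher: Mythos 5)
Your proposal is correct in outline but takes a genuinely different route from the paper. The paper argues directly in the variable-object setting, following \cite[8.5]{thesis}: it first shows that the building blocks $\coprod_i B_{[p_i]}$ (with $B$ the target of a generating cofibration of $\Thetansp$) are \emph{strictly} local, so that the unit map is an isomorphism-like Dwyer-Kan equivalence on these objects; it then writes an arbitrary cofibrant $X$ as a colimit of such objects and compares mapping spaces, showing $\Map(X,Y) \simeq \Map(FX,Y)$ for strictly local fibrant $Y$. Your argument instead reduces to the fixed-object-set comparison of Section 5: you reorganize the cell structure of $X$ to exhibit it as cofibrant in $\mathcal L(\Thetansp)^{\Deltaop}_{\mathcal O,f}$, identify $F$ restricted to fixed-object diagrams with the composite left adjoint of the Section 5 Quillen equivalences, and invoke the fact that derived units of Quillen equivalences are weak equivalences at cofibrant objects. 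This leverages machinery the paper proves anyway (and indeed uses elsewhere, in the acyclic-cofibration half of Proposition \ref{qpair}), whereas the paper's argument is self-contained and never leaves the variable-object world.

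Two points in your reduction need more than assertion, though both appear fillable. First, the identification of $F$ on fixed-object diagrams with the fixed-object rigidification: this follows from uniqueness of left adjoints once one checks that the composite right adjoint of Section 5 is the enriched nerve and that the unit $X \rightarrow RFX$ is the identity on objects; the latter requires inspecting the construction of $F$ via \cite[5.6]{multisort}, not just citing it. Second, and more substantively, the conclusion your third step actually delivers is a weak equivalence in $\mathcal L(\Thetansp)^{\Deltaop}_{\mathcal O,f}$, i.e., a local equivalence with respect to the fixed-object Segal maps, whereas the lemma asks for a Dwyer-Kan equivalence, which is defined via the localization functor $L$ of Section 6. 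You must still argue that a local equivalence between fixed-object precategories that is the identity on objects is a Dwyer-Kan equivalence; this amounts to comparing the fixed-object Segal localization with $L$ (both are Segal replacements, a local equivalence between objects satisfying the Segal condition is a levelwise equivalence, and for objects discrete in degree zero a levelwise equivalence induces equivalences of mapping objects, using Proposition \ref{fibrations}). Finally, both you and the paper treat cofibrant objects as cell complexes; strictly they are retracts of cell complexes, but since $F$ preserves retracts and Dwyer-Kan equivalences are closed under retracts, this costs nothing.
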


\begin{proof}
Consider an object in $\mathcal L \Theta_nSp^{\Deltaop}_{disc, f}$ of the form $\coprod_iB_{[p_i]}$, where $B$ is the target of a generating cofibration of $\Theta_nSp$, and let $Y$ be a fibrant object of $\mathcal L \Theta_nSp^{\Deltaop}_{disc, f}$.  Then notice that $(\Delta [p] \times B)_k = \Delta[p]_k \times B$ since $B$ is regarded as a constant simplicial diagram.  Then
\[ \begin{aligned}
\Map(\Delta[m], \Delta[p] \times B) & \cong \Map(\Delta[m], \Delta[p]) \times \Map(\Delta[m], B) \\
& \cong \Map(G(m), \Delta[p]) \times \Map(G(m), B) \\
& \cong \Map(G(m), \Delta[p], B)
\end{aligned} \] so $\Delta[p] \times B$ is strictly local.  By its construction, it follows that $\coprod_i B_{[p_i]}$ is also strictly local.  In particular, the map
\[ \coprod_i B[p_i] \rightarrow F\left( \coprod_i B[p_i] \right) \] is a Dwyer-Kan equivalence.

Now, suppose that $X$ is any cofibrant object.  Then it can be written as a colimit of objects of the above form, and we can assume that it can be written as
\[ X \simeq \colim_{\Deltaop}X_j \] where $X_j=\coprod_I B[p_i]$.  Then, using arguments about mapping spaces and strictly local objects as in \cite[8.5]{thesis}, we can show that
\[ \Map(X,Y) \simeq \Map(FX,Y) \] for any strictly local fibrant object $Y$, completing the proof.
\end{proof}

\begin{theorem}
The Quillen pair
\[ \xymatrix@1{F \colon \mathcal L(\Thetansp)^{\Deltaop}_{f, disc} \ar@<.5ex>[r] & \Theta_nSp-Cat \colon R. \ar@<.5ex>[l]} \]
is a Quillen equivalence.
\end{theorem}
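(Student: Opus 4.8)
The plan is to combine Lemma \ref{cofibrant} with the criterion for a Quillen equivalence of \cite[1.3.16]{hovey}: since Proposition \ref{qpair} already gives that $(F,R)$ is a Quillen pair, it suffices to establish the two conditions that (1) $R$ reflects weak equivalences between fibrant objects, and (2) for every cofibrant object $X$ of $\mathcal L(\Thetansp)^{\Deltaop}_{f,disc}$, the composite
\[ X \xrightarrow{\ \eta\ } RFX \xrightarrow{\ Rr\ } R(FX)^{fib} \]
is a weak equivalence, where $r \colon FX \rightarrow (FX)^{fib}$ is a fibrant replacement in $\Theta_nSp-Cat$. The first-named author carried out exactly this program in the case $n=1$ in \cite[8.6]{thesis}, and the same outline applies here.

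The central observation, which drives both conditions, is that the nerve functor $R$ records the enriched mapping objects and the homotopy category of an enriched category $\mathcal C$ faithfully. Because composition in $\mathcal C$ is strict, $R\mathcal C$ already satisfies the Segal condition on the nose, so the localization $L(R\mathcal C)$ is Dwyer-Kan equivalent to $R\mathcal C$ and satisfies $\map_{L(R\mathcal C)}(x,y) \simeq \Hom_{\mathcal C}(x,y)$ together with $\Ho(L(R\mathcal C)) \cong \pi_0 \mathcal C$. Granting this, a map $\mathcal C \rightarrow \mathcal D$ is a weak equivalence in $\Theta_nSp-Cat$ (conditions (W1) and (W2) of Theorem \ref{vcat}) precisely when $R\mathcal C \rightarrow R\mathcal D$ is a Dwyer-Kan equivalence of Segal precategory objects; in particular $R$ both preserves and reflects weak equivalences, which gives condition (1) at once.

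For condition (2), Lemma \ref{cofibrant} supplies that the unit $\eta \colon X \rightarrow RFX$ is a Dwyer-Kan equivalence for cofibrant $X$. It then remains to check that $Rr$ is a weak equivalence; but $r$ is a weak equivalence in $\Theta_nSp-Cat$, and by the observation of the previous paragraph $R$ carries weak equivalences of enriched categories to Dwyer-Kan equivalences, so $Rr$ is a weak equivalence and the composite is as well. Combining (1) and (2) then yields the Quillen equivalence.

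The main obstacle is the central observation itself, namely the analysis of the localization $L$ applied to a nerve. One must verify that passing to $L(R\mathcal C)$ does not disturb the strict Segal structure already present in $R\mathcal C$ --- equivalently, that the mapping objects computed in the localized model structure agree up to weak equivalence with the strict enriched hom-objects $\Hom_{\mathcal C}(x,y)$, and that the induced functor on homotopy categories recovers $\pi_0\mathcal C$. This is where the cartesian closure of $\Theta_nSp$ and of the $\Thetansp$-Segal space model structure enters, exactly as in the mapping-object computations of \cite[8.5, 8.6]{thesis}; the arguments there continue to hold once the generating set $I_f$ and the localization functor $L$ of the previous section are in hand.
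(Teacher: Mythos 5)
Your proposal is correct, and it rests on the same two essential ingredients as the paper's proof --- Lemma \ref{cofibrant}, and the fact that the nerve $R$ translates the weak equivalences of Theorem \ref{vcat} into Dwyer-Kan equivalences and back --- but it routes them through the dual half of the criterion of \cite[1.3.16]{hovey}. The paper checks the left-adjoint conditions: $F$ reflects weak equivalences between cofibrant objects (which follows from Lemma \ref{cofibrant} and the preservation of weak equivalences by $R$, via two-out-of-three in the naturality square of the unit), and the derived counit $F((RY)^c) \rightarrow Y$ is a Dwyer-Kan equivalence for every fibrant $Y$. You instead check the right-adjoint conditions: $R$ reflects weak equivalences between fibrant objects, and the derived unit is a weak equivalence on cofibrant objects. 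Your packaging buys a slightly shorter argument, since the counit map $F((RY)^c) \rightarrow Y$ never has to be examined at all: once $R$ is known to preserve weak equivalences, your condition (2) is immediate from Lemma \ref{cofibrant} and two-out-of-three. The price is that the entire weight now falls on your ``central observation,'' and note that you need it for arbitrary, not necessarily fibrant, enriched categories $\mathcal C$ (your condition (2) applies it to the map $FX \rightarrow (FX)^{fib}$ whose source need not be fibrant). But the paper's route needs exactly the same fact --- both to see that $RFf$ is a Dwyer-Kan equivalence when $Ff$ is a weak equivalence, and to descend the counit comparison from nerves back to enriched categories --- and in both treatments its verification is deferred to the $n=1$ arguments of \cite[8.5, 8.6]{thesis}, where the discreteness of the object set (which makes the strict Segal condition of $R\mathcal C$ a homotopy-invariant one) and the cartesian structure do the real work. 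So the two proofs are formally dual, with identical mathematical content at the points requiring genuine effort, and yours is a legitimate alternative write-up.
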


\begin{proof}
To prove this result, we can use Lemma \ref{cofibrant} to prove that $F$ reflects weak equivalences between cofibrant objects.  Then, we show that for any fibrant $\Theta_nSp$-category, the map $F((RY)^c) \rightarrow Y$ is a Dwyer-Kan equivalence, where $(RY)^c$ denotes a cofibrant replacement of $RY$.  The proof follows just as in the $n=1$ case \cite[8.6]{thesis}.
\end{proof}

\section{Fibrations in $\Thetansp$} \label{fibrationproof}

In this section we give the proof of Proposition \ref{fibrations}, establishing properties of fibrations in $\Thetansp$.

We begin with the case where $n=1$, so that $\Thetansp$ is just $\css$, the model structure for complete Segal spaces.

\begin{prop} \label{cssfibs}
The statement of Proposition \ref{fibrations} holds when $n=1$.
\end{prop}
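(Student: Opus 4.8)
The plan is to handle the two statements separately, in each case reducing a question about $\css$-fibrations to an elementary fact about sets --- via the total path-component functor for part (1), and via the disjoint-union structure of simplicial spaces for part (2).

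For part (1), I would first record that the discrete objects of $\css$ are precisely the constant simplicial spaces $\mathrm{const}(\mathcal O)$ attached to a set $\mathcal O$ (the image of the constant-diagram embedding $\Sets \hookrightarrow \css$), and that $\mathrm{const}$ admits the total path-component functor $\pi_0 \colon \css \to \Sets$ as a left adjoint, since a map $W \to \mathrm{const}(\mathcal O)$ is the same datum as a function $\pi_0 W \to \mathcal O$. Equip $\Sets$ with the trivial model structure, whose weak equivalences are the bijections and in which every map is both a cofibration and a fibration. I claim $\pi_0 \dashv \mathrm{const}$ is then a Quillen pair: preservation of cofibrations is automatic, and preservation of acyclic cofibrations amounts to showing that $\pi_0$ carries $\css$-weak equivalences to bijections. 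This I would deduce from the fact that the diagonal $d \colon \css \to \SSets$ is left Quillen --- it sends monomorphisms to monomorphisms, levelwise equivalences to equivalences, and each localizing map (the Segal maps $G(p) \to \Delta[p]$ and the completeness map $E \to \Delta[0]$) to a weak equivalence of simplicial sets, hence descends to the localization --- so that $\pi_0 = \pi_0^{\SSets}\circ d$ sends every $\css$-equivalence to a bijection. Granting this, $\mathrm{const}$ is a right Quillen functor and therefore preserves fibrations; since \emph{every} map of sets is a fibration in the trivial structure, every map between discrete objects is a $\css$-fibration.

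For part (2), given fibrations $p \colon X \to Y$ and $p' \colon X' \to Y'$, I would verify the right lifting property of $p \amalg p'$ against an arbitrary acyclic cofibration $j \colon A \to B$. The essential point is that coproducts in $\css$ are disjoint: any lifting square with bottom map $B \to Y \amalg Y'$ decomposes $B$ as the disjoint union $B_Y \amalg B_{Y'}$ of the preimages of the two summands, and compatibly $A = A_Y \amalg A_{Y'}$, with $j$ restricting to monomorphisms $A_Y \to B_Y$ and $A_{Y'} \to B_{Y'}$; moreover $A_Y \to X \amalg X'$ lands in $X$ and $A_{Y'}$ in $X'$, since $p \amalg p'$ respects summands. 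If the two restrictions of $j$ are again $\css$-weak equivalences, then the lifting problem splits into two problems solved by the right lifting properties of $p$ and $p'$, and the solutions reassemble into the desired lift $B \to X \amalg X'$.

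The main obstacle is thus the claim that a $\css$-weak equivalence respecting a coproduct decomposition restricts to weak equivalences on the summands. I would establish this by showing that a coproduct of complete Segal spaces is again a complete Segal space, so that fibrant replacement may be computed summandwise: the Segal and completeness conditions survive disjoint union because the relevant fibre products contain no cross terms between distinct summands, and Reedy fibrancy survives because for $n\ge 1$ the matching category is connected (forcing $M_n(W\amalg W')\cong M_nW\amalg M_nW'$) while a disjoint union of Kan fibrations is a Kan fibration --- the horn $\Lambda[n,k]\to\Delta[n]$ being connected. A Dwyer--Kan equivalence between coproducts of complete Segal spaces that preserves the decomposition then restricts to Dwyer--Kan equivalences on summands, since mapping spaces between objects in different summands are empty and the homotopy category of a coproduct is the disjoint union of the homotopy categories. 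Verifying the matching-object condition is the one point demanding genuine care, and I expect it to be the crux; pleasingly, it rests on the very same connectivity principle that makes part (2) true for simplicial sets in the first place.
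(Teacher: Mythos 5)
Your proof is correct, but it takes a genuinely different route from the paper's. The paper's proof writes down explicit generating acyclic cofibrations for $\css$ (pushout-products of the simplicial horns $V[m,k] \rightarrow \Delta[m]$ with the Segal and completeness maps) and verifies the right lifting property directly: for (1), discreteness collapses the relevant pullbacks to isomorphisms, and for (2), connectedness of $\Delta[m]$ forces any lifting problem against a coproduct into a single summand. You avoid explicit generators entirely. For (1) you exhibit $\pi_0 \dashv \mathrm{const}$ as a Quillen pair against the trivial model structure on $\Sets$, checking that $\pi_0$ inverts $\css$-equivalences by factoring it through the diagonal, which descends to the localization because it sends the localizing maps to weak equivalences of simplicial sets (implicitly you also use that every simplicial space is Reedy cofibrant, so Ken Brown's lemma applies --- this is available in the paper); then $\mathrm{const}$ is right Quillen into a structure where every map is a fibration, which gives (1) at once. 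For (2) you check lifting against an \emph{arbitrary} acyclic cofibration, splitting it along the coproduct decomposition of its codomain; the crux --- that a $\css$-equivalence respecting a coproduct decomposition restricts to $\css$-equivalences on the summands --- you correctly reduce to closure of complete Segal spaces under coproducts (connected matching categories for levels $\geq 1$, disjoint unions of Kan fibrations being Kan fibrations) together with the summandwise behavior of Dwyer--Kan equivalences and levelwise equivalences. The trade-off is substantive: the paper's argument is shorter, but it rests on the assertion that those explicit horns form a set of generating acyclic cofibrations for the \emph{localized} model structure, a claim the Bousfield localization machinery does not supply for free (lifting against such horns is in general only guaranteed to detect fibrations between local objects, which suffices for (1), where discrete objects are complete Segal spaces, but not obviously for (2), where the objects are arbitrary). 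Your argument needs several auxiliary closure lemmas, but it uses only the universal property of the localization and holds for maps between arbitrary objects, so it is the more robust of the two and would transport essentially verbatim to the inductive step for $\Thetansp$ in Section 8.
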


\begin{proof}
Recall that the generating acyclic cofibrations in $\css$ are of the form
\[ V[m,k] \times \Delta[p]^t \cup \Delta [m] \times G(p)^t \rightarrow \Delta[m] \times \Delta[p]^t \] or
\[ V[m,k] \times E^t \cup \Delta[m] \times \Delta[0]^t \rightarrow \Delta[m] \times E^t \] where $m \geq 1$, $0 \leq k \leq m$, $p \geq 0$, and $E$ denotes the nerve of the category with two objects and a single isomorphism between them.

Suppose that $X$ and $Y$ are discrete simplicial spaces.  To show that any map $X \rightarrow Y$ is a fibration, it suffices to prove that that it has the right lifting property with respect to these two kinds of generating acyclic cofibrations, which is equivalent to the existence of dotted-arrow lifts in the diagrams of simplicial sets
\[ \xymatrix{V[m,k] \ar[r] \ar[d] & X_n \ar[d] & \\
\Delta[m] \ar[r] \ar@{-->}[ur] & P \ar[r] \ar[d] & X_1 \times_{X_0} \cdots \times_{X_0} X_1 \ar[d] \\
& Y_n \ar[r] & Y_1 \times_{Y_0} \cdots \times_{Y_0} Y_1} \] and
\[ \xymatrix{V[m,k] \ar[r] \ar[d] & \Map(E^t,X) \ar[d] & \\
\Delta[m] \ar[r] \ar@{-->}[ur] & Q \ar[r] \ar[d] & X_0 \ar[d] \\
& \Map(E^t, Y) \ar[r] & Y_0 } \] where $P$ and $Q$ denote the pullbacks of their respective lower square diagrams.
In the first diagram, since $X$ and $Y$ are discrete, $X_0=X_1=X_n$ and $Y_0=Y_1=Y_n$ for all $n \geq 2$, so $P=X_n$ and the right-hand vertical map in the upper square is an isomorphism.  Therefore, the necessary lift exists.  Similarly, in the second diagram, we can again use the fact that $X$ and $Y$ are discrete to show that $\Map(E^t,X)=X_0$ and $\Map(E^t,Y)=Y_0$, from which it follows that $Q=X_0$ and the right-hand vertical map in the upper diagram is an isomorphisms, implying the existence of the desired lift.  Therefore, we have established that (1) holds in $\css$.

For (2), Suppose that $X \rightarrow Y$ and $X' \rightarrow Y'$ have the right lifting property with respect to the two kinds of generating acyclic cofibrations.  For the first kind, we need to find a dotted-arrow lift in any diagram of the form
\[ \xymatrix{V[m,k] \ar[r] \ar[d] & (X \amalg X')_n \ar[d] & \\
\Delta[m] \ar[r] \ar@{-->}[ur] & P \ar[r] \ar[d] & (X \amalg X')_1 \times_{(X \amalg X')_0} \cdots \times_{(X \amalg X')_0} (X \amalg X')_1 \ar[d] \\
& (Y \amalg Y')_n \ar[r] & (Y \amalg Y')_1 \times_{(Y \amalg Y')_0} \cdots \times_{(Y \amalg Y')_0} (Y \amalg Y')_1. } \]
However, since all maps in sight are given by coproducts of maps, we can rewrite the right-hand vertical map in the lower diagram as
\[ (X_1 \times_{X_0} \cdots \times_{X_0} X_1) \amalg (X'_1 \times_{X'_0} \cdots \times_{X'_0} X'_1) \rightarrow (Y_1 \times_{Y_0} \cdots \times_{Y_0} Y_1) \amalg (Y'_1 \times_{Y'_0} \cdots \times_{Y'_0} Y'_1). \]  Since $\Delta[m]$ is connected, finding a lift reduces to finding a lift on one of the components, which holds since we have assumed that each component map $X \rightarrow X'$ or $X' \rightarrow Y'$ is a fibration.  A similar argument can be used to establish the right lifting property with respect to the second type of acyclic cofibration.
\end{proof}

The proof of Proposition \ref{fibrations} can then be established via the following inductive result.

\begin{prop}
If conditions (1) and (2) from Proposition \ref{fibrations} hold for $\Theta_{n-1}Sp$, $n \geq 2$, then they hold for $\Thetansp$.
\end{prop}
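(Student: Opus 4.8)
The plan is to imitate the proof of Proposition \ref{cssfibs} level by level, replacing the role of ``simplicial set'' by that of ``object of $\Theta_{n-1}Sp$'' and feeding the inductive hypothesis in at the one place where the base case used a property of maps between discrete simplicial sets. Recall that $\Thetansp$ is the localization of $\SSets^{\Thetanop}_c$ with respect to the set $\mathcal T_n = Se_{\Theta_n} \cup Cpt_{\Theta_n} \cup V[1](\mathcal T_{n-1})$. Accordingly, a map $f \colon X \to Y$ is a fibration in $\Thetansp$ exactly when it has the right lifting property with respect to a set of generating acyclic cofibrations, and this set splits into three families: the ``Segal horns'' built from $Se_{\Theta_n}$, the ``completeness horns'' built from $Cpt_{\Theta_n}$, and the ``intertwined horns'' built from $V[1](\mathcal T_{n-1})$. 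The first step is therefore to write down these generating acyclic cofibrations explicitly, in the same pullback-square form used for $\css$, so that checking a lift against each becomes a diagram chase of the kind carried out in Proposition \ref{cssfibs}.

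For condition (1), suppose $X$ and $Y$ are discrete. First I would observe that, under the wreath-product description $\Theta_n = \Theta \Theta_{n-1}$, a discrete object of $\Thetansp$ restricts on each outer-simplicial entry to a discrete object of $\Theta_{n-1}Sp$, and that discreteness collapses all the outer Segal maps $X_k \to X_1 \times_{X_0} \cdots \times_{X_0} X_1$ and the completeness comparison to isomorphisms. Consequently the lifting problems against the Segal-horn and completeness-horn families are dispatched verbatim as in Proposition \ref{cssfibs}: the relevant right-hand vertical map in each pullback square is an isomorphism, so the required lift exists. The only genuinely new family is the one coming from $V[1](\mathcal T_{n-1})$, and this is exactly where the inductive hypothesis enters: using the adjunction that defines the intertwining functor $V$, a lift against the horn on $V[1](g)$ for $g \in \mathcal T_{n-1}$ transposes to a lifting problem in $\SSets^{\Theta_{n-1}^{op}}$ posed against $g$ (and its horns) between the $\Theta_{n-1}$-entries of $X$ and $Y$. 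Since these entries are discrete, condition (1) for $\Theta_{n-1}Sp$ supplies the lift.

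For condition (2), suppose $X \to Y$ and $X' \to Y'$ are fibrations and consider $X \amalg X' \to Y \amalg Y'$. As in the $\css$ case, coproducts commute with the finite limits (fibre products over $X_0$, matching objects) that appear in all three horn families, and the domains of the horns are connected, so each lifting problem splits over the two summands and is solved by the given fibrations. For the intertwined family I would again transpose across $V$ and note that the $\Theta_{n-1}$-entries of a coproduct are the coproducts of the entries, so that condition (2) for $\Theta_{n-1}Sp$ applies to the two summand maps and closes the induction.

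The main obstacle is the bookkeeping around $V[1](\mathcal T_{n-1})$: one must pin down the precise form of the generating acyclic cofibrations produced by localizing at this set and verify that the lifting problem against their horns really does transpose to a lifting problem in $\Theta_{n-1}Sp$ against $\mathcal T_{n-1}$ between the entry objects. This requires knowing that $V[1]$ interacts well with the relevant products, coproducts, and matching objects, and that the operation ``pass to $\Theta_{n-1}$-entries'' carries discrete objects to discrete objects and coproducts to coproducts. Once this compatibility is established, the diagram chases of Proposition \ref{cssfibs} go through unchanged and the inductive step follows.
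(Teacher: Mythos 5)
Your proposal takes essentially the same route as the paper's proof: both split the generating acyclic cofibrations of $\Thetansp$ into the Segal, completeness, and $V[1](\mathcal T_{n-1})$ families, dispatch the first two by the same discreteness/coproduct diagram chases as in Proposition \ref{cssfibs}, and reduce the third family to the inductive hypothesis by identifying maps out of $V[1](B)$ with coproducts, over pairs of objects, of maps out of $B$ into the mapping objects $M_X(x_0,x_1)$ of \cite{rezktheta}. The compatibility you flag as the ``main obstacle'' is exactly this coproduct decomposition, which the paper likewise invokes rather than reproves, so your argument matches the paper's in substance.
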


\begin{proof}
The generating acyclic cofibrations of $\Thetansp$ are of three kinds:
\[ V[m,k] \times \Theta_p(c_1, \ldots, c_p) \cup \Delta[m] \times G(p)(c_1, \ldots, c_p) \rightarrow \Delta[m] \times \Theta_p(c_1, \ldots, c_p) \]
for $m \geq 1$, $0 \leq k \leq m$, $p \geq 0$, and $c_1, \ldots, c_p$ objects of $\Theta_{n-1}$,
\[ V[m,k] \times T_\# \Delta [0] \cup \Delta[m] \times T_\# E \rightarrow \Delta[m] \times T_\# \Delta[0], \] for $m,k$ as before, and
\[ V[m,k] \times V[1](B) \cup \Delta[m] \times V[1](A) \rightarrow \Delta[m] \times V[1](B) \] where $A \rightarrow B$ is a map in $\mathcal T_{n-1}$, the set of generating cofibrations for $\Theta_{n-1}Sp$.

Let us first consider the case where $X \rightarrow Y$ is a map between discrete objects.  Showing that this map has the right lifting property with respect to the first two kinds of generating acyclic cofibrations is analogous to the proof of Proposition \ref{cssfibs}.  For the third kind, we need to show the existence of a dotted-arrow lift in any diagram of the form
\[ \xymatrix{V[m,k] \ar[r] \ar[d] & \Map(V[1](B), X) \ar[d] & \\
\Delta[m] \ar[r] \ar@{-->}[ur] & P \ar[r] \ar[d] & \Map(V[1](A), X) \ar[d] \\
& \Map(V[1](B), Y) \ar[r] & \Map(V[1](A), Y)} \] where $P$ denotes the pushout of the lower square.

Now, recall from \cite{rezktheta} that we can define the mapping object $M_X(x_0, x_1)(c_1)$ to be the object of $\Theta_{n-1}Sp$ defined as the pullback in the diagram
\[ \xymatrix{M_X(x_0, x_1)(c_1) \ar[r] \ar[d] & X[1](c_1) \ar[d] \\
{(x_0, x_1)} \ar[r] & X[0] \times X[0].} \]  Furthermore, we get
\[ \Map(V[1](B), X) = \coprod_{x_0, x_1} \Map(B, M_X(x_0, x_1)) \] and analogously for other objects in the above diagram.  Since we have reduced the problem to the world of $\Theta_{n-1}Sp$, our inductive hypothesis shows that the necessary lift exists.  Hence, condition (1) holds.

The same kind of argument, and again using the ideas of the proof of Proposition \ref{cssfibs}, we can verify that condition (2) holds as well.
\end{proof}

\end{document}